\def\l@section{\@tocline{1}{0pt}{1pc}{}{}}
\def\l@subsection{\@tocline{2}{0pt}{1pc}{4.6em}{}}
\def\l@subsubsection{\@tocline{3}{0pt}{1pc}{7.6em}{}}
\renewcommand{\tocsection}[3]{%
  \indentlabel{\@ifnotempty{#2}{\makebox[2.3em][l]{%
    \ignorespaces#1 #2.\hfill}}}#3}
\renewcommand{\tocsubsection}[3]{%
  \indentlabel{\@ifnotempty{#2}{\hspace*{2.3em}\makebox[2.3em][l]{%
    \ignorespaces#1 #2.\hfill}}}#3}
\renewcommand{\tocsubsubsection}[3]{%
  \indentlabel{\@ifnotempty{#2}{\hspace*{4.6em}\makebox[3em][l]{%
    \ignorespaces#1 #2.\hfill}}}#3}
\theoremstyle{plain}
\numberwithin{equation}{section}
\newtheorem{theorem}{Theorem}[section]
\newtheorem{corollary}[theorem]{Corollary}
\newtheorem{lemma}[theorem]{Lemma}
\newtheorem{proposition}[theorem]{Proposition}
\newtheorem{notation}[theorem]{Notation}
\newtheorem{definition}[theorem]{Definition}
\newtheorem{remark}[theorem]{Remark}
\newtheorem{main theorem}{Main Theorem}
\newcommand{\id}{\text{id}}
\newcommand{\be}{\begin{equation}}
\newcommand{\ee}{\end{equation}}
\renewcommand{\leq}{\leqslant}
\renewcommand{\geq}{\geqslant}
\newcommand{\CP}{\mathbf{CP}_{\mathcal{N}}(\mathcal{M})}
\newcommand{\CB}{\mathbf{CB}_{\mathcal{N}}(\mathcal{M})}
\begin{document}

\title{Phase Group Categories of Bimodule Quantum Channels}

\author{Linzhe Huang}
\address{Linzhe Huang, Beijing Institute of Mathematical Sciences and Applications, Beijing, 101408, China}
\email{huanglinzhe@bimsa.cn}

%\author{Arthur Jaffe}
%\address{Arthur Jaffe, 17 Oxford Street, Harvard University, Cambridge, MA 02138, USA}
%\email{Arthur\_Jaffe@harvard.edu}

\author{Chunlan Jiang}
\address{Chunlan Jiang, School of Mathematical Sciences, Hebei Normal University, Shijiazhuang, Hebei, 050024, China}
\email{cljiang@hebtu.edu.cn}

\author{Zhengwei Liu}
\address{Zhengwei Liu, Yau Mathematical Sciences Center and Department of Mathematics, Tsinghua University, Beijing, 100084, China \\
 Beijing Institute of Mathematical Sciences and Applications, Beijing, 101408, China}
\email{liuzhengwei@mail.tsinghua.edu.cn}

\author{Jinsong Wu}
\address{Jinsong Wu, Beijing Institute of Mathematical Sciences and Applications, Beijing, 101408, China}
\email{wjs@bimsa.cn}
\date{}

\maketitle

\begin{abstract}
In this paper, we study the quantum channel on a von Neuamnn algebra $\mathcal{M}$ preserving a von Neumann subalgebra $\mathcal{N}$, namely an $\mathcal{N}$-$\mathcal{N}$-bimodule unital completely positive map.
By introducing the relative irreducibility of a bimodule quantum channel, 
we show that its eigenvalues with modulus 1 form a finite cyclic group, called its phase group. Moreover, the corresponding eigenspaces are invertible $\mathcal{N}$-$\mathcal{N}$-bimodules, which encode a categorification of the phase group.
When $\mathcal{N}\subset \mathcal{M}$ is a finite-index irreducible subfactor of type II$_1$, we prove that any bimodule quantum channel is relatively irreducible for the intermediate subfactor of its fixed points. In addition, we can reformulate and prove these results intrinsically in subfactor planar algebras without referring to the subfactor using the methods of quantum Fourier analysis.
\end{abstract}

{\bf Keywords.} relatively irreducible, bimodule quantum channel, phase group, Perron-Frobenius eigenspace

{\bf MSC.} 46L37, 47H07, 43A32
%\tableofcontents
%\newpage

\section{Introduction}
Quantum channels are widely studied in quantum information.
In finite-dimensional systems, a quantum channel is a trace-preserving completely positive map acting on matrix algebras.
In infinite-dimensional  systems,  a von Neumann algebra $\mathcal{M}$ is generated by  observables, which may not have trace. 
A quantum channel is a unital completely positive map on $\mathcal{M}$ in the Heisenberg picture.
Furthermore, we study  quantum channels with symmetry $\mathcal{N}$ as   $\mathcal{N}$-$\mathcal{N}$-bimodule  maps for a von Neumann subalgebra $\mathcal{N}$.
There are various motivations to study $\mathcal{N}$-$\mathcal{N}$-bimodule  maps.
One can consider $\mathcal{N}$ as conserved quantities preserved by quantum channels; 
such a quantum channel can be reformulated as a unital completely positive $\mathcal{N}$-$\mathcal{N}$-bimodule  map by Kadison-Schwarz inequality.
 In quantum Markov processes \cite{DKY19,Gud08,SPF14}, we can consider $\mathcal{N}$ as  observables in the past invariant under the quantum channel at the current time.
 In subfactor theory, when $\mathcal{N}\subseteq\mathcal{M}$ is a subfactor, $\mathcal{N}$-$\mathcal{N}$ bimodules are well studied.
In quantum field theory, one can consider $\mathcal{N}$ as the observables in a local region (see \cite{LK95}).

In classical Markov progresses, Perron-Frobenius theorem is very fundamental.
Evans and H{\o}egh-Krohn \cite{EvaHoe78} generalized this theorem to irreducible quantum channels on finite-dimensional von Neumann algebras, which is called quantum Perron-Frobenius theorem.

%bimodule maps play a crucial role in understanding the interaction between different regions or sectors of quantum systems, respecting both algebraic and physical structures, we refer to e.g. \cite{EK98,BKLR15}.

%We consider the unital completely positive $\mathcal{N}$-$\mathcal{N}$-bimodule maps as quantum channels.
%Quantum channels in quantum Markov processes \cite{DKY19,Gud08,SPF14} can often be described as bimodule maps between certain algebras of observables.

In this paper, we study bimodule quantum channels with quantum symmetry $\mathcal{N}$ and establish the Perron-Frobenius theorem with quantum symmetries, which could be infinite-dimensional.
\begin{theorem}[See Theorem \ref{thm:RI Frobenius factor} and Theorem \ref{thm:RI Frobenius finite vN algebra}]\label{thm:phase group}
Suppose that $\mathcal{N}\subseteq\mathcal{M}$ is finite-index and $\Phi$ is a relatively irreducible bimodule quantum channel.
Suppose $\mathcal{N}$ is a factor.
Then the eigenvalues of $\Phi$ with modulus $1$ form a finite cyclic group, which we call the phase group.
\end{theorem}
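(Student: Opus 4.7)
\medskip
\noindent\textbf{Proof plan.} The plan is to follow the classical Evans--H\o egh-Krohn strategy adapted to the bimodule setting, namely: (i) show peripheral eigenvectors lie in the multiplicative domain of $\Phi$; (ii) deduce that the peripheral spectrum is a group and that the eigenspaces are invertible $\mathcal{N}$-$\mathcal{N}$-sub-bimodules of $\mathcal{M}$; (iii) conclude finiteness from the finite-index assumption and cyclicity from elementary topology of $\mathbb{T}$.

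First I would fix a $\Phi$-invariant faithful tracial state $\tau$ on $\mathcal{M}$ (in the II$_1$ setting, the unique trace; in general, an invariant weight compatible with the conditional expectation $E_{\mathcal{N}}$). Let $v \in \mathcal{M}$ be such that $\Phi(v) = \lambda v$ with $|\lambda|=1$. The Kadison--Schwarz inequality gives $\Phi(v^*v) \geq \Phi(v^*)\Phi(v) = v^*v$, and applying the invariant trace to the nonnegative difference yields $\Phi(v^*v) = \Phi(v^*)\Phi(v)$; the symmetric identity also holds, so $v$ lies in the multiplicative domain of $\Phi$. For any two peripheral eigenvectors $v_i$ with eigenvalues $\lambda_i$ we then obtain $\Phi(v_1 v_2) = \lambda_1\lambda_2\, v_1v_2$, while $\Phi(v^*) = \bar\lambda\, v^*$ from $*$-preservation. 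Hence the set $G$ of peripheral eigenvalues is closed under multiplication and inverse; and writing $V_\lambda := \ker(\Phi-\lambda\,\id)$, the multiplicative domain identities give $V_\lambda V_{\lambda'} \subseteq V_{\lambda\lambda'}$ as $\mathcal{N}$-$\mathcal{N}$-bimodules.

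Next, relative irreducibility is designed to force $V_1 = \mathcal{N}$, and combined with $V_\lambda V_{\bar\lambda} \subseteq V_1 = \mathcal{N}$ and unitarity of the inner product on each $V_\lambda$, this shows each $V_\lambda$ is an invertible $\mathcal{N}$-$\mathcal{N}$-sub-bimodule of $\mathcal{M}$, with inverse $V_{\bar\lambda}$. To obtain finiteness of $G$, I would use that $\mathcal{N} \subseteq \mathcal{M}$ has finite index: $L^2(\mathcal{M})$ decomposes as a finite direct sum of irreducible $\mathcal{N}$-$\mathcal{N}$-bimodules, so $\mathcal{M}$ contains only finitely many distinct invertible sub-bimodules. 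Since distinct $\lambda$'s give distinct $V_\lambda$ (a single nonzero element of $\mathcal{M}$ cannot be a $\lambda$-eigenvector for two different $\lambda$'s), the assignment $\lambda \mapsto V_\lambda$ is injective, so $G$ is finite. Finally, any finite subgroup of $\mathbb{T}$ is cyclic, giving the phase group.

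The principal obstacle is step (i), specifically producing the faithful $\Phi$-invariant trace (or weight) needed to upgrade the Kadison--Schwarz inequality to an equality: in the type II$_1$ finite-index case one can leverage uniqueness of the trace together with the $\mathcal{N}$-bimodule property of $\Phi$ and a Pimsner--Popa basis argument, but the general statement requires care because a bimodule UCP map need not automatically be trace-preserving. A secondary delicate point is pinning down the precise definition of \emph{relative irreducibility} so that it exactly yields $V_1 = \mathcal{N}$, which is what powers the invertibility of the eigen-bimodules in the argument above.
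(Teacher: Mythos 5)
Your overall strategy is the same as the paper's (invariant faithful state plus Kadison--Schwarz to make peripheral eigenvectors multiplicative, relative irreducibility to identify the fixed-point space with $\mathcal{N}$, finite index for finiteness, and cyclicity of finite subgroups of $U(1)$), but the two places you flag or gloss over are exactly where the real work lies, and as written both are gaps. First, the invariant state: you propose to take the (unique) trace in the II$_1$ case, but a relative irreducible bimodule quantum channel need not be trace-preserving. Already for $\mathcal{N}=\mathbb{C}\subseteq M_2(\mathbb{C})$ (finite Pimsner--Popa index) the map $\Phi(x)=\tfrac12\bigl(u^*xu+\omega_\psi(x)\mathbf{1}\bigr)$, with $u$ unitary and $\psi$ not parallel to an eigenvector of $u$, is unital, completely positive and irreducible but does not preserve $\mathrm{tr}$; so ``uniqueness of the trace plus a Pimsner--Popa basis'' cannot produce the state you need. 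The paper's Lemma \ref{lem:faithful state} constructs the invariant state differently: relative irreducibility of $\Phi$ gives relative irreducibility of $\Phi^*$, hence irreducibility of $\Phi^*|_{\mathcal{N}'\cap\mathcal{M}}$ on the finite-dimensional relative commutant (this is where finite index enters), and finite-dimensional Perron--Frobenius yields a strictly positive fixed point $x\in\mathcal{N}'\cap\mathcal{M}$ of $\Phi^*$; then $\omega(\cdot)=\tau(\cdot\,x)$ is a faithful normal $\Phi$-invariant state, in general \emph{not} a trace. Fortunately your subsequent Kadison--Schwarz/multiplicative-domain argument only uses faithfulness and invariance, not traciality, so the fix is to replace your step by this construction.

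Second, group closure: the inclusion $V_{\lambda_1}V_{\lambda_2}\subseteq V_{\lambda_1\lambda_2}$ does not by itself show that $\lambda_1\lambda_2$ is an eigenvalue, since a priori all products of eigenvectors could vanish; and your assertion that each $V_\lambda$ is an invertible bimodule with inverse $V_{\bar\lambda}$ is stated, not proved. The paper closes this by first proving the intertwiner characterization $\mathcal{M}(\Phi,\alpha)=\mathcal{P}(\Phi,\alpha)$ (Lemma \ref{lem:commute}, a Fourier-multiplier computation with the Pimsner--Popa inequality), which lets it show that the partial isometry in the polar decomposition of an eigenvector is again an eigenvector, and then uses comparison of projections in the finite factor $\mathcal{M}(\Phi,1)=\mathcal{N}$ to build a \emph{unitary} $u_\alpha\in\mathcal{M}(\Phi,\alpha)$; products of unitaries are nonzero, giving closure (and, incidentally, the invertibility of the eigen-bimodules you want). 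Within your framework a cheaper patch is available once you have $V_1=\mathcal{N}$ (which does follow from your multiplicative-domain step plus relative irreducibility, since $V_1$ is then a von Neumann algebra whose projections are fixed): the span of $V_{\bar\lambda}V_{\lambda}$ is a nonzero two-sided ideal of the factor $\mathcal{N}$, hence weakly dense, so $V_{\lambda}V_{\mu}=0$ would force $V_{\mu}=0$; but some such argument must be supplied. Your finiteness argument via finite length of $L^2(\mathcal{M})$ as an $\mathcal{N}$-$\mathcal{N}$-bimodule is workable, though the paper's route is more direct: $\sigma(\Phi)=\sigma(y_\Phi)$ with $y_\Phi$ in the finite-dimensional algebra $\mathcal{N}'\cap\mathcal{M}_1$.
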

Here, our notation of relative irreducibility for $\Phi$ means that $\Phi(p)\leq \lambda p$ implies $p\in\mathcal{N}$ for any projection $p\in\mathcal{M}$.
The relative irreducibility reduces to the irreducibility introduced by  Evans and H{\o}egh-Krohn \cite{EvaHoe78} when $\mathcal{N}=\mathbb{C}\mathbf{1}$.
The finite-index condition means  that the inclusion  $\mathcal{N}\subseteq\mathcal{M}$ is finite-index if there exist a conditional expectation $E_{\mathcal{N}}$ and $\lambda>0$ such that the Pimsner-Popa inequality holds \cite{PimPop86}:
\begin{align*}
    E_{\mathcal{N}}(x)\geq \lambda x,\quad\text{ for all } x\in\mathcal{M}^+.
\end{align*}
The best constant $\lambda^{-1}$ for the minimal conditional expectation is the Jones index.

When $\mathcal{N}=\mathbb{C}\mathbf{1}$ and $\mathcal{M}$ is finite-dimensional, Theorem \ref{thm:phase group} reduces to the quantum Frobenius theorem of Evans and H{\o}egh-Krohn.
They also proved that the eigenspace at each phase is one-dimensional and it is generated by a unitary.
In our case, the eigenspace of $\Phi$ at each phase is an $\mathcal{N}$-$\mathcal{N}$-bimodule, which may be an infinite-dimensional Hilbert space.
\begin{theorem}[See Theorem \ref{thm:RI Frobenius factor} and Theorem \ref{thm:RI Frobenius finite vN algebra}]\label{thm:bimodule fusion category}
Suppose $\Phi$ is relatively irreducible and $\mathcal{N}$ is a factor.
The eigenspaces of $\Phi$  are $\mathcal{N}$-$\mathcal{N}$-bimodules $\{H_{\alpha}\}_{\alpha\in \Gamma}$ and they form a unitary fusion category, which is the categorification of the phase group $\Gamma$.
In particular, the quantum dimension of each bimodule $H_{\alpha}$ is 1.
\end{theorem}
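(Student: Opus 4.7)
The plan is to build the fusion-category structure directly on the eigenspaces
\[
H_\alpha \;=\; \{x \in \mathcal{M} : \Phi(x) = \alpha x\}, \qquad \alpha \in \Gamma,
\]
using Theorem~\ref{thm:phase group} to restrict attention to the finite cyclic phase group $\Gamma$, and then combining $\mathcal{N}$-bimodularity of $\Phi$, Kadison--Schwarz saturation on $|\alpha|=1$ eigenvectors, and relative irreducibility. Bimodularity immediately makes each $H_\alpha$ an $\mathcal{N}$-$\mathcal{N}$-subbimodule, and unitality of $\Phi$ yields $\mathcal{N} \subseteq H_1$. I would first pin down $H_1 = \mathcal{N}$: by trace preservation (a consequence of $\mathcal{N}$-bimodularity combined with the finite-index hypothesis via the Pimsner--Popa inequality), the standard Kadison--Schwarz equality argument forces the fixed points of $\Phi$ to form a unital $C^*$-subalgebra of $\mathcal{M}$; applying relative irreducibility to the spectral projections of self-adjoint fixed points confines those projections to $\mathcal{N}$, and spectral synthesis then yields $H_1 = \mathcal{N}$.

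The next step is to prove that $H_\alpha$ lies in the multiplicative domain of $\Phi$, i.e.\ $x^*x, xx^* \in \mathcal{N}$ for every $x \in H_\alpha$. Kadison--Schwarz gives
\begin{equation*}
\Phi(x^*x) \;\geq\; \Phi(x)^*\Phi(x) \;=\; |\alpha|^2 x^*x \;=\; x^*x,
\end{equation*}
and trace preservation promotes this to equality; hence $x^*x$ is a positive fixed point of $\Phi$ and by the first step $x^*x \in \mathcal{N}$, and symmetrically $xx^* \in \mathcal{N}$. Multiplicativity on $H_\alpha$ then yields $\Phi(xy) = \Phi(x)\Phi(y)$ for every $x \in H_\alpha$, $y \in \mathcal{M}$, which gives the fusion inclusion $H_\alpha H_\beta \subseteq H_{\alpha\beta}$; the $*$-compatibility $\Phi(x^*) = \Phi(x)^*$ simultaneously gives $H_\alpha^* = H_{\alpha^{-1}}$.

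To extract invertibility and quantum dimension $1$, I would polar-decompose a nonzero $x \in H_\alpha$ as $x = u|x|$ with $|x| = (x^*x)^{1/2} \in \mathcal{N}$ and $u \in \mathcal{M}$ a partial isometry having $u^*u, uu^* \in \mathcal{N}$; the identity $\Phi(u)|x| = \Phi(x) = \alpha u|x|$ together with right bimodularity forces $\Phi(u) = \alpha u$, so $u \in H_\alpha$. Since $\mathcal{N}$ is a factor, the two-sided ideal in $\mathcal{N}$ generated by $H_{\alpha^{-1}} H_\alpha$ is nonzero and therefore equal to $\mathcal{N}$; a cutting-and-pasting argument using this identity then shows that every nonzero $\mathcal{N}$-$\mathcal{N}$-subbimodule of $H_\alpha$ is all of $H_\alpha$, so each $H_\alpha$ is simple. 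The multiplication map $H_\alpha \otimes_{\mathcal{N}} H_{\alpha^{-1}} \to H_1 = \mathcal{N}$ is therefore a surjective homomorphism between simple bimodules, hence an isomorphism, witnessing invertibility of $H_\alpha$ and in particular that its quantum dimension is $1$. With fusion rules $H_\alpha \otimes_{\mathcal{N}} H_\beta \cong H_{\alpha\beta}$ and duals $H_\alpha^{\vee} = H_{\alpha^{-1}}$, the family $\{H_\alpha\}_{\alpha \in \Gamma}$ assembles into a unitary fusion category categorifying $\Gamma$.

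The most delicate ingredient is the trace preservation of $\Phi$: $\mathcal{N}$-bimodularity formally reduces it to the behavior of $E_{\mathcal{N}} \circ \Phi$, but identifying this composition with $E_{\mathcal{N}}$ uses the finite-index inclusion in a nontrivial way, and is precisely where the Pimsner--Popa constant enters. A secondary difficulty is a Hilbert-space completion issue: each $H_\alpha$ is a priori only a norm-closed subspace of $\mathcal{M}$, and promoting it to a genuine $\mathcal{N}$-$\mathcal{N}$-bimodule in the Connes--Sauvageot sense, so that Connes fusion $\otimes_{\mathcal{N}}$ and quantum dimension are defined, requires verifying $L^2$-completeness, which again relies on finite index and the Pimsner--Popa inequality.
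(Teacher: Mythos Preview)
Your overall architecture is correct and runs parallel to the paper's: Kadison--Schwarz saturation on peripheral eigenvectors, fixed-point algebra forced down to $\mathcal{N}$ by relative irreducibility, polar decomposition to extract a partial isometry in each $H_\alpha$, and a factoriality argument to upgrade it to a unitary. Where you invoke the multiplicative domain of a unital CP map to obtain $\Phi(xy)=\Phi(x)\Phi(y)$ and hence $H_\alpha H_\beta\subseteq H_{\alpha\beta}$, the paper instead encodes the same information as the phase-commutation relations $xy_\Phi^*=\alpha y_\Phi^* x$ and $y_\Phi x=\alpha x y_\Phi$ for $y_\Phi\in\mathcal{N}'\cap\mathcal{M}_1$ (their Lemma~\ref{lem:commute} and Theorem~\ref{thm:entry}(i)); these two formulations are equivalent, and your route is arguably the more transparent one since it avoids the basic-construction machinery $e_1,e_2,\widehat{\Phi}$.

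There is, however, a genuine gap at exactly the point you flag as ``most delicate.'' Trace preservation does \emph{not} follow from $\mathcal{N}$-bimodularity plus the Pimsner--Popa inequality: bimodularity only gives $\Phi^*(\mathbf{1})\in\mathcal{N}'\cap\mathcal{M}$, and when the inclusion is not irreducible this relative commutant is larger than $\mathbb{C}\mathbf{1}$, so $\tau\circ\Phi\neq\tau$ in general. What the paper actually proves (Lemma~\ref{lem:faithful state}) is the existence of \emph{some} faithful normal invariant state $\omega$, and this requires relative irreducibility in an essential way: since $\Phi^*$ is also relatively irreducible, its restriction to the finite-dimensional algebra $\mathcal{N}'\cap\mathcal{M}$ is irreducible in the classical sense, and Perron--Frobenius then yields a strictly positive $h\in\mathcal{N}'\cap\mathcal{M}$ with $\Phi^*(h)=h$, whence $\omega(\cdot)=\tau(h\,\cdot)$ is the desired faithful invariant state. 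Once you replace ``trace preservation'' by this $\omega$, your Kadison--Schwarz equality argument goes through verbatim (only faithfulness of the invariant functional is used), and the rest of your outline is sound.
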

%$\mathcal{M}=\mathcal{N}\otimes M_n(\mathbb{C})$
%We study the multiplicative structure of space inspired by quantum error correction.
For each $\mathcal{N}$-$\mathcal{N}$-bimodule $H_{\alpha}$, there exists a unitary $u_{\alpha}\in\mathcal{M}$ such that $H_{\alpha}=u_{\alpha}\mathcal{N}=\mathcal{N}u_{\alpha}$.

%Moreover, $u_{\alpha}u_{\beta}=\omega(\alpha,\beta)u_{\alpha\beta}$, where $\omega$ is a 3-cocycle in $H^3(\Gamma,\mathbb{T})$.

Next we consider $\mathcal{N}\subseteq \mathcal{M}$ to be an irreducible finite-index subfactor of type II$_1$.
Note that if $\mathcal{M}$ is finite-dimensional, then  $\mathcal{N}=\mathcal{M}$ and $\Phi$ is the identity, which is trivial.
However, the theory is non-trivial in the infinite-dimensional case. 
We prove that $\Phi$ is a relatively irreducible $\mathcal{P}$-$\mathcal{P}$-bimodule map, where $\mathcal{P}$ is the intermediate subfactor given by the fixed points.
So the eigenvalues of $\Phi$ with modulus $1$ form the phase group and the eigenspaces are $\mathcal{P}$-$\mathcal{P}$-bimodules, which form a unitary categorification of the phase group.
The quantum dimension of each eigenspace as $\mathcal{N}$-$\mathcal{N}$-bimodule is the Jones index $[\mathcal{P}:\mathcal{N}]$.

We can use subfactor planar algebras to characterize a bimodule map $\Phi$ as an element $y_{\Phi}$ in 2-box spaces.
The spectrum of $\Phi$ is the same as the spectrum of $y_{\Phi}$.
The bimodule map $\Phi$ is completely positive if and only if $y_{\Phi}$ is $\mathfrak{F}$-positive, i.e., its Fourier transform is positive.
The projection from $L^2(\mathcal{M})$ onto $H_{\alpha}$ is the spectral projection of $y_{\Phi}$ with the spectral point $\alpha$.
In this case, we can give a canonical construction of the unitary $u_{\alpha}$ such that the 3-cocycle $\omega$ is constant 1.
We obtain a sequence of factors $\displaystyle \mathcal{N}\subseteq\mathcal{P}\subseteq \bigoplus_{\alpha\in\Gamma} H_{\alpha}\subseteq\mathcal{M}$ and $\displaystyle \mathcal{P}\subseteq \bigoplus_{\alpha\in\Gamma} H_{\alpha}$ is the group subfactor of the phase group.
The biprojections $P$ and $Q$ corresponding to the intermediate subfactors $\mathcal{P}$ and $\displaystyle \bigoplus_{\alpha\in\Gamma} H_{\alpha}$ are generated by the Fourier transforms of  $\Phi\Phi^*$ and $\Phi$ respectively.
We reformulate this statement intrinsically in subfactor planar algebras without referring to the subfactor $\mathcal{N}\subseteq\mathcal{M}$ as follows.
\begin{theorem}[See Theorem \ref{thm:Frobenius irreducible} and Theorem \ref{thm:two biprojection}]\label{thm:intrinsically}
    Suppose $\mathscr{P}$ is an irreducible subfactor planar algebra.
    Let $x\in\mathscr{P}_{2,\pm}$ be a positive operator.
    Let $q$ be the biprojection generated by $x$, and let $p$ be the biprojection generated by $x\ast\overline{x}$.
  Then $\displaystyle q=\sum_{\alpha\in\Gamma} p_{\alpha}$, where $p_{\alpha}$ is a right shift of the biprojection $p$.
  Moreover $\{p_{\alpha}\}_{\alpha\in\Gamma}$ forms a finite cyclic group under convolution.
  % The subfactor planar algebra cut down by $p$ and $q$ is a group subfactor planar algebra.   
  %  Then $\mathcal{P}\subseteq\mathcal{Q}$ is a group subfactor, where $\mathcal{P}$ and $\mathcal{Q}$ are intermediate subfactors corresponding to $B_{x\ast\overline{x}}$ and  $B_x$ respectively.
\end{theorem}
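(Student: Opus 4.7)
My plan is to deduce the planar-algebra statement from the operator-algebra results of the paper via the 2-box dictionary described just before the theorem. First, rescale $x$ so that the associated $\mathcal{N}$-$\mathcal{N}$-bimodule map $\Phi$ with $y_{\Phi}=x$ is unital; the positivity assumption on $x\in\mathscr{P}_{2,\pm}$ corresponds, after Fourier transform, to $\Phi$ being completely positive (so $\Phi$ is a bimodule quantum channel in the sense of the paper). Under this dictionary, $x\ast\overline{x}$ corresponds to $\Phi\Phi^{\ast}$, so the biprojection $p$ generated by $x\ast\overline{x}$ is the biprojection corresponding to the intermediate subfactor $\mathcal{P}\subseteq\mathcal{M}$ of fixed points of $\Phi\Phi^{\ast}$, and the biprojection $q$ generated by $x$ is the one corresponding to the intermediate subfactor generated by $\Phi$ itself.

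Next, I would apply Theorems \ref{thm:phase group} and \ref{thm:bimodule fusion category} to $\Phi$ viewed as a $\mathcal{P}$-$\mathcal{P}$-bimodule channel, where (by the intermediate discussion in the introduction) it is relative irreducible. Theorem \ref{thm:phase group} furnishes a finite cyclic phase group $\Gamma$ of modulus-one eigenvalues, and Theorem \ref{thm:bimodule fusion category} furnishes invertible $\mathcal{P}$-$\mathcal{P}$-bimodule eigenspaces $H_{\alpha}=u_{\alpha}\mathcal{P}=\mathcal{P}u_{\alpha}$ which, in the planar-algebra setting, can be produced canonically with trivial $3$-cocycle. Letting $p_{\alpha}\in\mathscr{P}_{2,\pm}$ be the 2-box element corresponding to the spectral projection of $y_{\Phi}$ at $\alpha$, equivalently the projection onto $H_{\alpha}$, one checks that $p_{\alpha}$ is the right shift of $p$ by $u_{\alpha}$: translating $\mathcal{P}$ to $u_{\alpha}\mathcal{P}$ on the algebra side is exactly conjugation of $p$ by $u_{\alpha}$ on the 2-box side. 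In particular each $p_{\alpha}$ is again a biprojection.

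The identification $q=\sum_{\alpha\in\Gamma}p_{\alpha}$ then follows because $\bigoplus_{\alpha\in\Gamma}H_{\alpha}$ is precisely the intermediate subfactor generated by $\Phi$ (whose biprojection is $q$) and decomposes as the orthogonal sum of the eigencomponents $H_{\alpha}$, so the corresponding projections sum to $q$. The group law on $\{p_{\alpha}\}$ under convolution transports from the fusion of invertible $\mathcal{P}$-$\mathcal{P}$-bimodules, which by Theorem \ref{thm:bimodule fusion category} categorifies the multiplication in $\Gamma$; because convolution of 2-boxes models composition of bimodules (via Fourier duality), this fusion rule reads precisely as $p_{\alpha}\ast p_{\beta}=p_{\alpha\beta}$ up to the normalization built into the trace of $p$. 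The main obstacle I foresee is twofold. First, one must verify that each shift $p_{\alpha}$ is a genuine biprojection purely in planar-algebraic terms, which amounts to showing that conjugation by the element representing $u_{\alpha}$ preserves the defining identities of $p$. Second, one must perform the canonical construction of the $u_{\alpha}$ entirely inside $\mathscr{P}_{2,\pm}$ so that the $3$-cocycle really trivializes and the convolution product reproduces the group $\Gamma$ on the nose, rather than merely up to a projective twist; making this step self-contained without appealing to an underlying subfactor, using only quantum Fourier analysis, is where the real technical work lies.
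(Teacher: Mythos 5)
Your strategy is the reverse of the paper's: the paper proves this statement intrinsically inside the planar algebra (Lemma \ref{lem:split}, the sum set estimate, the quantum Schur product theorem and the Hausdorff--Young inequality yield Theorem \ref{thm:Frobenius irreducible}, from which Theorem \ref{thm:two biprojection} follows by taking $p_\alpha$ to be explicit discrete-Fourier combinations of the $\widehat{q_j}$), and then uses it to \emph{deduce} the subfactor statements; you propose to go the other way, importing Theorems \ref{thm:phase group} and \ref{thm:bimodule fusion category}. That direction is not circular (Section \ref{sec:PF theory for CPB} is independent of Section \ref{sec:Phase groups for F positive elements}), but it has concrete gaps. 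First, your dictionary is set up backwards: if $x$ is a positive operator and you put $y_{\Phi}=x$, then $\Phi=\Theta_x$ is \emph{not} completely positive in general --- by Proposition \ref{prop:cpmapeq}, complete positivity is equivalent to positivity of $\widehat{\Phi}=\mathfrak{F}^{-1}(y_\Phi)$, i.e.\ to $\mathfrak{F}$-positivity of $y_\Phi$, not to its operator positivity. The given positive $x$ must play the role of the Fourier multiplier $\widehat{\Phi}$ (so $y_\Phi$ is its Fourier transform); your later identification of $x\ast\overline{x}$ with $\Phi\Phi^{*}$ tacitly assumes this corrected assignment, so the write-up is internally inconsistent, and in this theorem the distinction between an element and its Fourier transform is exactly what is at stake. (Also, the rescaling-to-unital step is only legitimate because irreducibility forces $\Phi(\mathbf{1})$ to be scalar; and to have a subfactor $\mathcal{N}\subseteq\mathcal{M}$ at all for an abstract $\mathscr{P}$ you must invoke Popa's reconstruction theorem, which you do not mention and which the paper's intrinsic formulation is designed to avoid.)

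Second, and more seriously, the steps you defer as ``the real technical work'' are the actual content of the theorem, and they do not follow formally from Theorems \ref{thm:phase group} and \ref{thm:bimodule fusion category}. Those results give you the unitaries $u_\alpha$ and the eigenspaces $H_\alpha=u_\alpha\mathcal{P}$, but: (i) showing that the projection onto $\overline{H_\alpha}$ is a \emph{right shift} of $p$ in the technical sense of \cite[Definition 6.5]{JLW16} (equal trace and the convolution identity $p\ast p_\alpha=\frac{tr_2(p)}{\sqrt{\mu}}\,p_\alpha$) is not ``conjugation of $p$ by $u_\alpha$'' --- conjugation by $u_\alpha\in\mathcal{M}$ is not an operation inside $\mathscr{P}_{2,\pm}$, and in the paper this step is carried by the sum set estimate (Proposition \ref{prop:Sum set estimate}); (ii) identifying the biprojection generated by $x$ (the range of sums of convolution powers) with $\sum_\alpha p_\alpha$ requires the Ces\`aro-limit identities $q_1=\lim_n\frac1n\sum_k x^k$ and $\sum_j q_j=\lim_n\frac1n\sum_k (xx^*)^k$ together with the quantum Schur product theorem and Proposition \ref{prop:biprojection}, none of which are supplied by the bimodule-category statement; (iii) that convolution of the $p_\alpha$ reproduces the cyclic group on the nose, rather than up to a twist, is again a computation with shifts of biprojections (item (vi) of Theorem \ref{thm:Frobenius irreducible}). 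As written, your proposal is an outline whose essential verifications are exactly the quantum Fourier analysis the paper performs, so it does not yet constitute a proof; if you want to pursue the subfactor route, you would need to fix the Fourier-side dictionary and then prove (i)--(iii) directly, at which point you would largely be reproducing Section \ref{sec:Phase groups for F positive elements} in different clothing.
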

Theorem \ref{thm:intrinsically} implies Theorem \ref{thm:phase group} and Theorem \ref{thm:bimodule fusion category} in the case of irreducible type II$_1$ subfactors.
We can give an intrinsic  proof on this theorem using methods developed in quantum Fourier analysis \cite{JJLRW20}.

%we refer to Theorem \ref{thm:Frobenius irreducible}.

The paper is organized as follows: In \S \ref{sec:Completely positive bimodule map}, we recall the bimodule quantum channels and the Fourier multipliers, and reinterpret the Pimsner-Popa inequality for bimodule quantum channels.
In \S \ref{sec:PF theory for CPB}, we introduce the notion of relative irreducibility for bimodule quantum channels and characterize the eigenspaces of bimodule quantum channels with respect to eigenvalues with modulus 1. 
In \S \ref{sec:Phase groups for F positive elements}, we present an intrinsic proof of the characterization of eigenspaces of bimodule quantum channels in terms of subfactor planar algebras by quantum Fourier analysis.
In \S \ref{sec:extension}, we show the results in \S \ref{sec:PF theory for CPB} for
finite inclusions of finite von Neumann algebras.
We also obtain equivalent characterizations of the relative irreducibility.

\section{Bimodule Quantum Channels}\label{sec:Completely positive bimodule map}
A von Neumann algebra $\mathcal{M}$ on a Hilbert space is a weak-operator closed $*$-algebra.
Let $\mathcal{N}$ be a von Neumann subalgebra of $\mathcal{M}$.
We use $\mathcal{N}\subseteq\mathcal{M}$ to denote the inclusion of von Neumann algebras.
We call $\mathcal{M}$ a factor if it has a trivial center.
A factor $\mathcal{M}$ is called a finite factor if it has a unique normal faithful trace.
A subfactor is an inclusion of factors $\mathcal{N}\subseteq\mathcal{M}$.
Subfactors are quantum symmetries, generalizing the symmetries  of groups and their duals.
The indices of subfactors generalize the indices of subgroups in groups.
Jones \cite{Jon83} proved the remarkable index classification theorem of subfactors:
\[ \left\{ 4 \cos^2\frac{\pi}{n}, n=3,4,5,\ldots \right\} \cup [4,\infty]. \]
Suppose $\mathcal{M}$ and $\mathcal{N}$ are finite factors and $\mathcal{N}\subseteq\mathcal{M}$ is a subfactor of finite index   $[\mathcal{M}:\mathcal{N}]=\mu$, $\mu>0$.
The subfactor $\mathcal{N}\subseteq\mathcal{M}$ is called extremal if the normalized traces on the commutant $\mathcal{N}'$ of $\mathcal{N}$ and $\mathcal{M}$ coincide on $\mathcal{N}'\cap\mathcal{M}$.
We always assume that  $\mathcal{M}$ and $\mathcal{N}$ are finite factors, and all subfactors in this paper are extremal and finite indices if there is no confusion.

Let $\tau_{\mathcal{M}}(=\tau)$  be the normal faithful tracial state on  $\mathcal{M}$.
We denote by $L^2(\mathcal{M})$ the Gelfand-Naimark-Segal (GNS) representation arising from $\tau$ and $\Omega$ the cyclic separating vector.
Let $e_{1}$ be the Jones projection from $L^2(\mathcal{M})$ onto the Hilbert subspace $L^2(\mathcal{N})$.
We denote by $E_{\mathcal{N}}$ the trace-preserving conditional expectation of $\mathcal{M}$ onto $\mathcal{N}$.
Then we have that $e_1xe_1=E_{\mathcal{N}}(x)e_1$ for all $x\in\mathcal{M}$.
Let $\mathcal{M}_1=\langle \mathcal{M}, e_1\rangle$ be the von Neumann algebra acting on $L^2(\mathcal{M})$ generated by $\mathcal{M}$ and $e_1$.
Note that $\mathcal{M}_1=J\mathcal{N}'J$, where $J$ is the conjugation on $L^2(\mathcal{M})$.
We denote by $\Omega$ the cyclic and separating vector in $L^2(\mathcal{M})$ for $\mathcal{M}$.
This is called the basic construction \cite{Jon83} for the subfactor $\mathcal{N}\subseteq\mathcal{M}$.
Let $\tau_1$ be a fixed normal faithful tracial state on $\mathcal{M}_1$.
Continuing the basic construction for the subfactor $\mathcal{M}\subseteq\mathcal{M}_1$, we can obtain the Jones projection $e_2$, the  trace-preserving conditional expectation $E_{\mathcal{M}}$, the cyclic and separating vector $\Omega_2$ and the von Neumann algebra $\mathcal{M}_2$.
We have that $\tau_1|_{\mathcal{M}}=\tau$ and $E_{\mathcal{M}}(e_1)=\mu^{-1} \mathbf{1}$.

Recall that for a subfactor $\mathcal{N}\subseteq\mathcal{M}$, there exists a Pimsner-Popa basis \cite{PimPop86} $\displaystyle \{\eta_j\}_{j=1}^m$ of $\mathcal{N}\subseteq \mathcal{M}$, namely
\begin{align*}
x=\sum_{j=1}^m  E_{\mathcal{N}}(x\eta_j^*)\eta_j, \quad \text{for all } x\in\mathcal{M}.
\end{align*}
Equivalently, $\displaystyle x=\sum_{j=1}^m \eta_j^* E_{\mathcal{N}}(\eta_j x)$.
The index of $\mathcal{N}\subseteq\mathcal{M}$ is $\tau_1(e_1)^{-1}= \displaystyle \sum_{j=1}^m\tau(\eta_j\eta_j^*)=\mu$.
The Pimsner-Popa inequality reads
\begin{align}\label{eq:PP inequality}
    E_{\mathcal{N}}(x)\geq \mu^{-1} x,\quad\text{ for all } x\in\mathcal{M}^+,
\end{align}
where the inverse $\mu^{-1}$ of the Jones index is called the Pimsner-Popa constant.
\subsection{Completely positive bimodule maps}
In this subsection, we briefly investigate the completely positive bimodule maps for subfactors.
Suppose $\mathcal{A}$, $\mathcal{B}$ are unital C$^*$-algebras.
A linear map $\Phi:\mathcal{A}\to\mathcal{B}$ is positive if $\Phi(x^*x)\geq0$ for all $x\in\mathcal{A}$.
Suppose $n\in \mathbb{N}$.
A linear map $\Phi:\mathcal{A}\to\mathcal{B}$ is $n$-positive if $\Phi\otimes id_n$ is positive on $\mathcal{A}\otimes M_n(\mathbb{C})$.
A linear map $\Phi:\mathcal{A} \to \mathcal{B}$ is completely positive if it is $n$-positive for all $n\in\mathbb{N}$.
When $\mathcal{A}, \mathcal{B}$ are von Neumann algebras, we assume that $\Phi$ is normal.
A normal completely positive map $\Phi$ is a quantum channel if $\Phi$ is unital, i.e., $\Phi(\mathbf{1})=\mathbf{1}$.
% \begin{align}
% \begin{pmatrix}
%     a_{11}&a_{12}&\cdots&a_{1n}\\
%     a_{21}&a_{12}&\cdots&a_{2n}\\
%     \cdots&\cdots&\cdots&\cdots\\
%     a_{n1}&a_{n2}&\cdots&a_{nn}
% \end{pmatrix}\mapsto
% \begin{pmatrix}
%    \Phi(a_{11})&\Phi(a_{12})&\cdots&\Phi(a_{1n})\\
%     \Phi(a_{21})&\Phi(a_{12})&\cdots&\Phi(a_{2n})\\
%     \cdots&\cdots&\cdots&\cdots\\
%     \Phi(a_{n1})&\Phi(a_{n2})&\cdots&\Phi(a_{nn})
% \end{pmatrix}.
% \end{align}

Suppose that $\mathcal{N}\subseteq\mathcal{M}$ is a subfactor.
A normal linear map $\Phi:\mathcal{M}\to \mathcal{M}$ is an $\mathcal{N}$-$\mathcal{N}$-bimodule map if 
\begin{align*}
    \Phi(y_1 x y_2)=y_1\Phi(x) y_2, \quad\text{for all } x\in\mathcal{M},\ y_i\in\mathcal{N},\ i=1,2.
\end{align*}
We denote the set of all normal completely positive bimodule (CPB) maps by $\mathbf{CP}_{\mathcal{N}}(\mathcal{M})$ and the set of all bounded bimodule maps by $\CB$.
We say $\Phi\in\mathbf{CP}_{\mathcal{N}}(\mathcal{M})$ a bimodule quantum channel if $\Phi(\mathbf{1})=\mathbf{1}$.

We next show that $\mathbf{CP}_{\mathcal{N}}(\mathcal{M})$ and $\mathcal{N}'\cap \mathcal{M}_1$ coincide.
For any $\Phi\in \CB$, $\Phi$ commutes with the left and right actions of $\mathcal{N}$ on $L^2(\mathcal{M})$.
Hence $\Phi$ is corresponding to an element $y_{\Phi}\in\mathcal{N}'\cap \mathcal{M}_1$ satisfying 
\begin{align}\label{eq:bimodule map correponding to operator}
y_{\Phi} x\Omega=\Phi(x)\Omega\quad \text{ for all } x\in \mathcal{M}.
\end{align}
On the other hand, for any $y\in\mathcal{N}'\cap \mathcal{M}_1$, we define
\begin{align}
      \Phi(x)=\mu E_{\mathcal{M}}(yxe_1),\quad \text{ for all } x\in\mathcal{M}.
\end{align}
One can easily check that $\Phi\in\CB$.
Therefore, we shall identify $\CB$ with $\mathcal{N}'\cap \mathcal{M}_1$ and the bimodule $_{\mathcal{N}}\mathcal{M}_{\mathcal{N}}$ (see also \cite{Bis97}).
\subsection{Fourier multipliers and  pictorial interpretations}
To introduce the Fourier  multiplier  of $\Phi$, we first recall the following zig and zag operators defined in \cite[Definition 4.1.9]{Jon21}:
\begin{align*}
    \kappa_1(x_1\otimes x_2)= &\mu^{-1/2} \sum_{j=1}^m x_1\eta_j^* \otimes \eta_j \otimes x_2 \\
    \mu_3(x_1\otimes x_2\otimes x_3)=& x_1\otimes x_2x_3,
\end{align*}
where $x_1\otimes x_2\in \bigotimes_{\mathcal{N}}^2\mathcal{M}$ and $x_1\otimes x_2\otimes x_3\in \bigotimes_{\mathcal{N}}^3\mathcal{M}$.
Hence the inverse Fourier transform of $y_{\Phi}$ in $\bigotimes_{\mathcal{N}}^2\mathcal{M}$ becomes 
\begin{align*}
    \mu_3(\id \otimes \Phi \otimes \id )\kappa_1(x\otimes y)
    =&\sum_{j=1}^m\mu_3 (\id \otimes \Phi \otimes \id ) x\eta_j^* \otimes \eta_j \otimes y\\
    =& \sum_{j=1}^m\mu_3 x\eta_j^* \otimes \Phi(\eta_j) \otimes y\\
    =& \sum_{j=1}^m x\eta_j^* \otimes \Phi(\eta_j)y,
\end{align*}
where $x, y\in \mathcal{M}$.
\begin{definition}[Fourier multipliers]
   For any $\Phi\in\CB$, we call the inverse Fourier transform of $y_{\Phi}$ the {\sl Fourier multiplier} of $\Phi$ and denote it by $\widehat{\Phi}$.
\end{definition}
We note that $\widehat{\Phi}\in \mathcal{M}'\cap \mathcal{M}_2$.
By the Hausdorff-Young inequality \cite{JLW16}:
\begin{align*}
    \|\widehat{\Phi}\|_{\infty}\leq\mu^{-1/2} \|y_{\Phi}\|_1,
\end{align*}
we see that $\widehat{\Phi}$ is bounded.
Let $\Omega_1$ be the cyclic and separating vector in $L^2(\mathcal{M}_1)$ for $\mathcal{M}_1$.
Identifying $\widehat{\Phi}$ as an element in $\mathcal{M}'\cap \mathcal{M}_2$, we obtain that
    \begin{align}\label{eq:fouriermultiple}
        \widehat{\Phi}  x e_1 y \Omega_1 = \mu^{1/2}\sum_{j=1}^m x \eta_j^* e_1\Phi(\eta_j)y\Omega_1, \quad \text{ for all } x, y\in\mathcal{M}.
    \end{align}

Next we show the pictorial interpretation of the Fourier multiplier in Jones planar algebras \cite{BisJon00,Jon21}. 
Suppose that $\mathscr{P}=\{\mathscr{P}_{n,\pm}\}_{n\geq0}$ is a subfactor planar algebra.
For the two box spaces, we have $\mathscr{P}_{2,+}\cong \mathcal{N}'\cap\mathcal{M}_1$ and $\mathscr{P}_{2,-}\cong \mathcal{M}'\cap\mathcal{M}_2$.
The Fourier transform $\mathfrak{F}$: $\mathscr{P}_{2,+}\to\mathscr{P}_{2,-}$ is a $90^\circ$ rotation:
\begin{align}\label{eq:Fourier transform}
\mathfrak{F}(x):=\raisebox{-0.9cm}{
\begin{tikzpicture}[scale=1.5]
\path [fill=gray!40] (-0.3, -0.4) rectangle (0.8, 0.9);
\path [fill=white] (0.35, -0.4)--(0.35, 0.5) .. controls +(0, 0.3) and +(0, 0.3) .. (0.65, 0.5)--(0.65, -0.4);
\path[fill=white] (0.15, 0.9) -- (0.15, 0) .. controls +(0, -0.3) and +(0, -0.3) .. (-0.15, 0)--(-0.15, 0.9);
\draw [blue, fill=white] (0,0) rectangle (0.5, 0.5);
\node at (0.25, 0.25) {$x$};
\draw (0.35, 0)--(0.35, -0.4) (0.15, 0.5)--(0.15, 0.9);
\draw (0.35, 0.5) .. controls +(0, 0.3) and +(0, 0.3) .. (0.65, 0.5)--(0.65, -0.4);
\draw (0.15, 0) .. controls +(0, -0.3) and +(0, -0.3) .. (-0.15, 0)--(-0.15, 0.9);
\end{tikzpicture}}
\;.
\end{align}
The Fourier transform  induces the convolution on $\mathscr{P}_{2,+}$:
\begin{align}\label{eq:convolution1}
   x\ast y:=\mathfrak{F}^{-1}(\mathfrak{F}(y)\mathfrak{F}(x))=\raisebox{-0.9cm}{
\begin{tikzpicture}[scale=1.5]
\path [fill=gray!40] (0.15, -0.4) rectangle (1.05, 0.9);
\path [fill=white] (0.35, 0.5) .. controls +(0, 0.3) and +(0, 0.3) .. (0.85, 0.5)-- (0.85, 0) .. controls +(0, -0.3) and +(0, -0.3) .. (0.35, 0)--(0.35, 0.5);
\draw [blue, fill=white] (0,0) rectangle (0.5, 0.5);
\node at (0.25, 0.25) {$x$};
\draw (0.15, 0.5)--(0.15, 0.9) (0.15, 0)--(0.15, -0.4);
\begin{scope}[shift={(0.7, 0)}]
\draw [blue, fill=white] (0,0) rectangle (0.5, 0.5);
\node at (0.25, 0.25) {$y$};
\draw (0.35, 0.5)--(0.35, 0.9)  (0.35, 0)--(0.35, -0.4);
\end{scope}
\draw (0.35, 0.5) .. controls +(0, 0.3) and +(0, 0.3) .. (0.85, 0.5);
\draw (0.35, 0) .. controls +(0, -0.3) and +(0, -0.3) .. (0.85, 0);
\end{tikzpicture}}
\;.
\end{align}
Suppose $\Phi\in\CB$, we have the pictorial interpretation of the Fourier multiplier $\widehat{\Phi}\in\mathscr{P}_{2,-}$ as the Fourier inverse of $y_{\Phi}\in \mathscr{P}_{2,+}$:
\begin{align*}
    \widehat{\Phi}=\raisebox{-0.6cm}{
\begin{tikzpicture}[xscale=1.2]
\path [fill=gray!40] (-0.4, -0.4) rectangle (0.9, 0.9);
\path [fill=white] (0.35, 0.9)--(0.35, 0) .. controls +(0, -0.3) and +(0, -0.3) .. (0.65, 0)--(0.65, 0.9);
\path[fill=white] (0.15, -0.4) -- (0.15, 0.5) .. controls +(0, 0.3) and +(0, 0.3) .. (-0.15, 0.5)--(-0.15, -0.4);
%\draw (-0.4, -0.4)--(-0.4, 0.9) (0.9, -0.4)--(0.9, 0.9);
\draw (0.35, 0.5)--(0.35, 0.9) (0.15, 0)--(0.15, -0.4);
\draw  (0.35, 0.9)--(0.35, 0) .. controls +(0, -0.3) and +(0, -0.3) .. (0.65, 0)--(0.65, 0.9);
\draw (0.15, -0.4) -- (0.15, 0.5) .. controls +(0, 0.3) and +(0, 0.3) .. (-0.15, 0.5)--(-0.15, -0.4);
\draw [blue, fill=white] (0,0) rectangle (0.5, 0.5);
\node at (0.25, 0.25) {\tiny $y_{\Phi}$};
\end{tikzpicture}}\;.
\end{align*}
The information of $\Phi$ is completely determined by $\widehat{\Phi}$.

\begin{remark}
The pictorial interpretation of $\mu_3(\id \otimes \Phi \otimes \id )\kappa_1$ is
\begin{align*}
    \mu_3(\id \otimes \Phi \otimes \id )\kappa_1=\raisebox{-0.6cm}{
\begin{tikzpicture}[xscale=1.2]
\path [fill=gray!40] (-0.4, -0.4) rectangle (0.9, 0.9);
\path [fill=white] (0.35, 0.9)--(0.35, 0) .. controls +(0, -0.3) and +(0, -0.3) .. (0.65, 0)--(0.65, 0.9);
\path[fill=white] (0.15, -0.4) -- (0.15, 0.5) .. controls +(0, 0.3) and +(0, 0.3) .. (-0.15, 0.5)--(-0.15, -0.4);
\draw (-0.4, -0.4)--(-0.4, 0.9) (0.9, -0.4)--(0.9, 0.9);
\draw (0.35, 0.5)--(0.35, 0.9) (0.15, 0)--(0.15, -0.4);
\draw  (0.35, 0.9)--(0.35, 0) .. controls +(0, -0.3) and +(0, -0.3) .. (0.65, 0)--(0.65, 0.9);
\draw (0.15, -0.4) -- (0.15, 0.5) .. controls +(0, 0.3) and +(0, 0.3) .. (-0.15, 0.5)--(-0.15, -0.4);
\draw [blue, fill=white] (0,0) rectangle (0.5, 0.5);
\node at (0.25, 0.25) {\tiny $y_{\Phi}$};
\end{tikzpicture}}\;.
\end{align*}
\end{remark}
\subsection{Explicit formulas for CPB maps}
In this subsection, we see two algebraic expressions for bounded bimodule maps on the subfactors.
\begin{proposition}\label{prop:phi formula}
  Suppose $\mathcal{N}\subseteq\mathcal{M}$ is a subfactor of finite index $\mu$, $\mu>0$, and $\Phi$ is a bounded bimodule map on $\mathcal{M}$. 
  Then we have 
  \begin{align}\label{eq:expression bimodule map}
     \Phi(x)=\mu E_{\mathcal{M}}(y_{\Phi}xe_1),\quad x\in\mathcal{M}.
  \end{align}
\end{proposition}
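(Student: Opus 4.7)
The plan is to verify \eqref{eq:expression bimodule map} by combining the defining relation \eqref{eq:bimodule map correponding to operator} for $y_{\Phi}$ with a Pimsner-Popa expansion inside the basic construction. Both sides of the claimed equality are elements of $\mathcal{M}$ depending normally on $x$, so it suffices to verify the identity pointwise.

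First I would use the basic construction to expand $y_{\Phi}\in \mathcal{N}'\cap\mathcal{M}_1\subseteq\mathcal{M}_1$ in the form
\[
 y_{\Phi} \;=\; \sum_{j=1}^m \eta_j e_1 a_j,\qquad a_j\in\mathcal{M},
\]
where $\{\eta_j\}$ is the Pimsner-Popa basis; such a finite expansion is a standard consequence of $\mathcal{M}_1 = \overline{\mathcal{M} e_1 \mathcal{M}}$ together with the identity $b = \sum_j\eta_j E_{\mathcal{N}}(\eta_j^* b)$ and the fact that $\mathcal{N}$ commutes with $e_1$.

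Next I would apply this expansion to $x\Omega$. Using $e_1 b\Omega = E_{\mathcal{N}}(b)\Omega$ for $b\in\mathcal{M}$, the defining relation $y_{\Phi}x\Omega = \Phi(x)\Omega$ combined with the separating property of $\Omega$ for $\mathcal{M}$ yields
\[
 \Phi(x) \;=\; \sum_{j=1}^m \eta_j E_{\mathcal{N}}(a_j x).
\]
On the other hand, using $e_1 b e_1 = E_{\mathcal{N}}(b)e_1$ and the Markov property $E_{\mathcal{M}}(e_1) = \mu^{-1}\mathbf{1}$ together with the $\mathcal{M}$-bimodularity of $E_{\mathcal{M}}$, I would compute
\[
 \mu E_{\mathcal{M}}(y_{\Phi} x e_1) \;=\; \mu \sum_{j=1}^m \eta_j E_{\mathcal{N}}(a_j x)\, E_{\mathcal{M}}(e_1) \;=\; \sum_{j=1}^m \eta_j E_{\mathcal{N}}(a_j x),
\]
which matches the previous display and therefore establishes \eqref{eq:expression bimodule map}.

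The argument is essentially bookkeeping inside the basic construction; the only points requiring care are the existence of the finite Pimsner-Popa expansion (guaranteed by $\mu = [\mathcal{M}:\mathcal{N}] < \infty$) and the fact that $E_{\mathcal{M}}$ is an $\mathcal{M}$-bimodule map so one may pull $\eta_j\in\mathcal{M}$ and $E_{\mathcal{N}}(a_j x)\in\mathcal{N}\subseteq\mathcal{M}$ out of $E_{\mathcal{M}}$. I do not anticipate any substantive obstacle.
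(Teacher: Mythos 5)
Your proof is correct. It differs from the paper's argument mainly in how the key identity is obtained: the paper simply writes $\Phi(x)\Omega=y_{\Phi}x\Omega=\mu E_{\mathcal{M}}(y_{\Phi}xe_1)\Omega$ — the second equality being the pull-down identity $z\Omega=ze_1\Omega=\mu E_{\mathcal{M}}(ze_1)e_1\Omega$ for $z\in\mathcal{M}_1$ — and then invokes the separating property of $\Omega$, whereas you replace this implicit appeal to pull-down by an explicit finite expansion $y_{\Phi}=\sum_j\eta_j e_1 a_j$ coming from $\mathcal{M}_1=\mathcal{M}e_1\mathcal{M}$ and then compute both sides termwise using $e_1be_1=E_{\mathcal{N}}(b)e_1$, the $\mathcal{M}$-bimodularity of $E_{\mathcal{M}}$, and $E_{\mathcal{M}}(e_1)=\mu^{-1}\mathbf{1}$. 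The two routes rest on the same structural facts (your expansion is essentially how the pull-down identity is proved), so what your version buys is self-containedness — the crucial equality is verified by hand rather than quoted — at the cost of invoking the (standard, finite-index) fact that every element of $\mathcal{M}_1$ admits a finite expansion over the Pimsner--Popa basis; the paper's version is shorter because it uses that identity wholesale. One cosmetic point: with the paper's normalization of the basis, $b=\sum_j\eta_j^*E_{\mathcal{N}}(\eta_j b)$, so the expansion most naturally reads $y_{\Phi}=\sum_j\eta_j^*e_1a_j$; the swap of $\eta_j$ and $\eta_j^*$ is a harmless convention issue and does not affect the argument.
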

\begin{proof}
Note that for any $x\in \mathcal{M}$, we have
    \begin{align*}
        \Phi(x)\Omega=y_{\Phi} x\Omega
        =\mu E_{\mathcal{M}}(y_{\Phi} xe_1)\Omega.
    \end{align*}
Then the proposition is true.
\end{proof}
\begin{remark}
We also study the case when the subfactor $\mathcal{N}\subseteq\mathcal{M}$ is non-extremal.
    Note that $\tau_{\mathcal{N}'}$ and $\tau_2$ are two faithful traces on $\mathcal{N}'\cap\mathcal{M}_2$.
Then there exists a strictly positive operator $\Delta$ in the center $Z(\mathcal{N}'\cap\mathcal{M}_2)$ such that
 $\tau_2(x)=\tau_{\mathcal{N}'}(\Delta^2 x)$ and $\tau_{\mathcal{N}'}(x)=\tau_2(\Delta^{-2} x)$ for all $x\in\mathcal{N}'\cap\mathcal{M}_2$.
 The Fourier transform $\mathfrak{F}$: $\mathcal{N}'\cap \mathcal{M}_1\to\mathcal{M}'\cap\mathcal{M}_2$ and its inverse $\mathfrak{F}^{-1}$: $\mathcal{M}'\cap\mathcal{M}_2\to\mathcal{N}'\cap \mathcal{M}_1$ are defined as follows:
    \begin{align*}
    \mathfrak{F}(x)=&\mu^{3/2} E_{\mathcal{M}'\cap\mathcal{M}_2}(\Delta xe_2e_1), \quad x\in \mathcal{N}'\cap \mathcal{M}\\
     \mathfrak{F}^{-1}(x)=& \mu^{3/2} E_{\mathcal{M}_1}(\Delta^{-1} x e_1e_2) \quad x\in \mathcal{M}'\cap \mathcal{M}_2.
     \end{align*}
     For any bimodule map $\Phi$ on $\mathcal{M}$, we then have the following expression:
  \begin{align*}
     \Phi(x)=\mu^2 E_{\mathcal{M}}(\Delta e_2) E_{\mathcal{M}}(\mathfrak{F}^{-1}(\widehat{\Phi})x e_1),\quad x\in\mathcal{M}.
  \end{align*}
  When $\Delta=\mathbf{1}$, by $ E_{\mathcal{M}}(e_2)=\mu^{-1}\mathbf{1}$, we recover Equation \eqref{eq:expression bimodule map}.
\end{remark}

\begin{remark}\label{rem:bimodule map}
Suppose $\mathcal{N}\subseteq\mathcal{M}$ is a subfactor.
For any $y\in\mathcal{N}'\cap\mathcal{M}_1$, we define a linear map $\Theta_y$ on $\mathcal{M}$ as
\begin{align}
 \Theta_y(x)=\mu E_{\mathcal{M}}(yxe_1),\quad x\in\mathcal{M}.
\end{align}
In particular, $\Theta_{\mathbf{1}}=id$ and $\Theta_{e_1}=E_{\mathcal{N}}$.
It is clear that $\Theta_y$ is a completely bounded bimodule map.
We say $\Theta_y$ is the bimodule map on $\mathcal{M}$ induced by $y$.
By the uniqueness of the Fourier multiplier of $\Theta_y$, we have that
\begin{align}\label{eq:for1}
    \widehat{\Theta_y}=\mathfrak{F}^{-1}(y).
\end{align}
Hence for any $\Phi\in \CB$, we have that
\begin{align}\label{eq:for2}
    \Phi=\Theta_{\mathfrak{F}(\widehat{\Phi})}=\Theta_{y_{\Phi}}.
\end{align}
\end{remark}
\begin{remark}\label{rem:operation bimodule map}
Suppose $\Phi_1, \Phi_2\in \CB$ and $y_j=\mathfrak{F}(\widehat{\Phi_j})=y_{\Phi_j}$ for $j=1,2$.
It is known that the addition, the composition and the involution of bimodule maps are described as follows:
\begin{align*}
    \Phi_1+\Phi_2=& \Theta_{y_1+y_2}, \\
    \Phi_1\Phi_2=& \Theta_{y_1y_2},\\
    \Phi_1^*=&\Theta_{y_1^*}.
\end{align*}
Due to the convolution on $\mathcal{N}'\cap \mathcal{M}_1$, we can define an operation $\star$ on bimodule maps as follows:
\begin{align*}
    \Phi_1\star\Phi_2=\Theta_{y_1* y_2},
\end{align*}
where
\begin{align*}
    y_1*y_2=\mathfrak{F}^{-1}(\mathfrak{F}(y_1)\mathfrak{F}(y_2)).
\end{align*}
\end{remark}
To see the equivalence between the complete positivity of $\Phi$ and the positivity of $\widehat{\Phi}$, we need the following characterization of $\Phi$.
\begin{proposition}\label{prop:formula}
 Suppose $\mathcal{N}\subseteq\mathcal{M}$ is a subfactor of finite index $\mu$, $\mu>0$, and $\Phi$ is a bounded bimodule map. 
Then 
\begin{align}\label{eq:phi}
    \Phi(x)=\mu^{3/2} E_{\mathcal{M}}(e_2e_1\widehat{\Phi} x e_1e_2).
\end{align}
\end{proposition}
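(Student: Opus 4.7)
The plan is to verify the claimed identity by showing that both sides agree as bounded $\mathcal{N}$-$\mathcal{N}$-bimodule maps on $\mathcal{M}$ and then invoking the uniqueness from Remark \ref{rem:bimodule map}. Setting $\Psi(x):=\mu^{3/2}E_{\mathcal{M}}(e_2 e_1\widehat{\Phi}xe_1 e_2)$, one first observes that $\Psi$ is a bounded $\mathcal{N}$-$\mathcal{N}$-bimodule map in $x$, since $e_1$ and $e_2$ both commute with $\mathcal{N}$ while $E_{\mathcal{M}}$ is $\mathcal{N}$-bilinear. Hence $\Psi\in\CB$, so it suffices to prove $y_\Psi=y_\Phi$, equivalently $\Psi(x)\Omega=\Phi(x)\Omega$ for every $x\in\mathcal{M}$.

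I would test this equality of vectors against $z\Omega$, $z\in\mathcal{M}$. Using trace-preservation of $E_{\mathcal{M}}:\mathcal{M}_2\to\mathcal{M}$ and Proposition \ref{prop:phi formula}, the identity to check is the trace relation
\[
\mu\,\tau_1(z^*y_\Phi xe_1)=\mu^{3/2}\tau_2(z^* e_2 e_1\widehat{\Phi}xe_1 e_2),\qquad x,z\in\mathcal{M}.
\]
This is precisely a manifestation of the Fourier-transform relationship $y_\Phi=\mathfrak{F}(\widehat{\Phi})$. I would unpack the explicit formula for $\mathfrak{F}$ recorded in the remark following Proposition \ref{prop:phi formula} (with $\Delta=\mathbf{1}$ in the extremal case), substitute, and simplify using cyclicity of $\tau_2$, the basic identity $e_2 z^* e_2=z^* e_2$ for $z\in\mathcal{M}$, and the Jones--Temperley--Lieb relations $e_1 e_2 e_1=\mu^{-1}e_1$, $e_2 e_1 e_2=\mu^{-1}e_2$, $E_{\mathcal{M}}(e_1)=\mu^{-1}\mathbf{1}$, and $E_{\mathcal{M}_1}(e_2)=\mu^{-1}\mathbf{1}$.

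The main obstacle is that $\widehat{\Phi}\in\mathcal{M}_2\setminus\mathcal{M}_1$, so Markov-type reductions do not apply directly to the sandwich $e_1\widehat{\Phi}e_1$. The key trick is to exploit $\widehat{\Phi}\in\mathcal{M}'$ to commute $\widehat{\Phi}$ past $x$ (and other elements of $\mathcal{M}$) freely, and then to use the Jones projections $e_2$ via cyclicity of $\tau_2$ to pair $\widehat{\Phi}$ against the trace on $\mathcal{M}_2$, thereby reducing the computation to one inside $\mathcal{M}_1$ where Proposition \ref{prop:phi formula} directly applies. A conceptually cleaner route is fully pictorial: both sides of the identity correspond to planar tangles in the subfactor planar algebra of \S\ref{sec:Completely positive bimodule map}, and the identity reduces to a planar isotopy once one invokes $e_1 e_2 e_1=\mu^{-1}e_1$ and the standard cap-off conventions for $E_{\mathcal{M}}$; this diagrammatic proof also makes manifest the symmetry between formulas \eqref{eq:expression bimodule map} and \eqref{eq:phi}.
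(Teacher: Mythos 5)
Your reduction is sound as far as it goes: $\Psi(x):=\mu^{3/2}E_{\mathcal{M}}(e_2e_1\widehat{\Phi}xe_1e_2)$ is indeed a bounded $\mathcal{N}$-$\mathcal{N}$-bimodule map (for the left action you also need $\widehat{\Phi}\in\mathcal{M}'$, not merely that $e_1,e_2$ commute with $\mathcal{N}$), and pairing against $z\Omega$ correctly reduces \eqref{eq:phi} to the trace identity $\mu\,\tau_1(z^*y_\Phi xe_1)=\mu^{3/2}\tau_2(z^*e_2e_1\widehat{\Phi}xe_1e_2)$. This is a genuinely different framing from the paper's proof, which computes the vector $e_1\Phi(x)\Omega_1$ directly, using the Pimsner--Popa expansion of $x$, bimodularity, and the defining relation \eqref{eq:fouriermultiple} of $\widehat{\Phi}$, and then removes $\Omega_1$ by a pull-down/separating-vector argument before multiplying by $e_2$ and applying $E_{\mathcal{M}}$.

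The gap is in how you propose to verify the trace identity. The precise quantitative link between $\widehat{\Phi}$ and $\Phi$ (with the correct powers of $\mu$) is exactly what the proposition asserts, and the only rigorous handle the paper provides on $\widehat{\Phi}$ is Equation \eqref{eq:fouriermultiple}; your plan never invokes it. Instead you rely on the explicit formulas for $\mathfrak{F},\mathfrak{F}^{-1}$ in the remark after Proposition \ref{prop:phi formula}, but these are stated there without proof, and as printed their normalization is not even consistent with \eqref{eq:fouriermultiple}: taking $\Phi=\id$, one has $\widehat{\id}=\mu^{3/2}e_2$ by Remark \ref{rem:examples}, while the formula you would substitute gives $\mu^{3/2}E_{\mathcal{M}_1}(\widehat{\id}\,e_1e_2)=\mu^{3}E_{\mathcal{M}_1}(e_2e_1e_2)=\mu\,\mathbf{1}\neq y_{\id}=\mathbf{1}$, so ``substitute and simplify'' comes out off by a factor of $\mu$; fixing the constant means re-deriving the formula from \eqref{eq:fouriermultiple}, which is essentially the paper's computation. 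Moreover, the difficulty you yourself identify ($\widehat{\Phi}\notin\mathcal{M}_1$) is not resolved by commuting $\widehat{\Phi}$ past $\mathcal{M}$ and using $e_2z^*e_2=z^*e_2$: after these moves $\widehat{\Phi}$ still sits inside $\tau_2$ between the two $e_1$'s, and since $\widehat{\Phi}$ is only characterized through its action on vectors $ae_1b\,\Omega_1$, while $\langle\,\cdot\,\Omega_1,\Omega_1\rangle\neq\tau_2$ on $\mathcal{M}_2$ (test on $e_2$), you cannot pass to vector states for free; some version of \eqref{eq:fouriermultiple} must enter. Finally, the ``fully pictorial'' shortcut is only heuristic here, because $x\in\mathcal{M}$ is not an element of $\mathscr{P}^{\mathcal{N}\subseteq\mathcal{M}}$ --- the paper itself presents that picture only as an informal remark. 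So the architecture of your argument is fine, but the decisive computation is missing.
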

\begin{proof}
By Equation \eqref{eq:fouriermultiple}, we have
\begin{align*}
    e_1\Phi(x)\Omega_1=& \sum_{j=1}^m e_1 \Phi(E_{\mathcal{N}} (x\eta_j^*)\eta_j)\Omega_1 \\
    =& \sum_{j=1}^m  e_1 E_{\mathcal{N}} (x\eta_j^*)\Phi(\eta_j)\Omega_1 \\
    =& \sum_{j=1}^m  e_1 x\eta_j^* e_1\Phi(\eta_j)\Omega_1 \\
    =&\mu^{-1/2} e_1\widehat{\Phi} x e_1e_2\Omega_1.
\end{align*}
Note that there exists $y\in \mathcal{M}_1$ such that $y e_2=\mu^{-1/2} e_1\widehat{\Phi} x e_1e_2$.
We then we have $e_1\Phi(x)\Omega_1=y\Omega_1$
Note that $\Omega_1$ is separating for $\mathcal{M}_1$.
We obtain that $e_1\Phi(x)=y$.
Then
\begin{align*}
e_1\Phi(x)e_2=ye_2=\mu^{-1/2} e_1\widehat{\Phi} x e_1e_2.
\end{align*}
Multiplying $e_2$ from the left-hand side, we have
\begin{align*}
   \mu^{-1} e_2\Phi(x)=\mu^{-1/2} e_2e_1\widehat{\Phi} x e_1e_2. 
\end{align*}
By taking the conditional expectation $E_{\mathcal{M}}$, we see Equation \eqref{eq:phi} is true.
\end{proof}

\begin{remark}
    By Proposition \ref{prop:formula}, we see that every bounded bimodule map on $\mathcal{N}\subseteq\mathcal{M}$ is completely bounded.
\end{remark}

\begin{remark}
In terms of the bimodule language, we have the following informally pictorial interpretation:
\begin{align}\label{eq:convolution picture}
    \Phi(y)=\raisebox{-0.9cm}{
\begin{tikzpicture}[scale=1.5]
\path [fill=gray!40] (0.5, -0.4) rectangle (0.8, 0.9);
\path [fill=gray!40] (-0.3, 0) rectangle (0, 0.6);
\draw [blue,fill=white] (0,0) rectangle (0.7, 0.5);
\node at (0.35, 0.25) {$\widehat{\Phi}$};
\begin{scope}[shift={(-0.6, 0)}]
\draw [blue,fill=white, rounded corners] (0,0) rectangle (0.35, 0.5);
\node at (0.175, 0.25) {$y$};
\end{scope}
\draw [fill=gray!40] (0.5, 0)--(0.5, -0.4) (0.5, 0.5)--(0.5, 0.9);
\draw[fill=gray!40] (0.2, 0.5) .. controls +(0, 0.4) and +(0, 0.4) .. (-0.425, 0.5);
\draw[fill=gray!40] (0.2, 0) .. controls +(0, -0.4) and +(0, -0.4) .. (-0.425, 0);
\end{tikzpicture}},
\end{align}
where $y\in \mathcal{M}$ and the box of $y$ is depicted as  a rounded box because it is not in the planar algebra $\mathscr{P}^{\mathcal{N}\subseteq \mathcal{M}}$, but it acts like a 1-box element.
This means $\Phi(y)=y*\widehat{\Phi}$ informally.
When the inclusion $\mathcal{N}\subseteq \mathcal{M}$ is $\mathbb{C}\subseteq M_n(\mathbb{C})$, by using the spin model, we have the following spectral decomposition of $\widehat{\Phi}$:
\begin{align*}
\vcenter{\hbox{
\begin{tikzpicture}[scale=1.5]
\path [fill=gray!40] (0.5, -0.4) rectangle (0.8, 0.9);
\path [fill=gray!40] (0.2, -0.4) rectangle (-0.1, 0.9);
\draw (0.5, -0.4)--(0.5, 0.9) (0.2, -0.4)--(0.2, 0.9);
\draw [blue,fill=white] (0,0) rectangle (0.7, 0.5);
\node at (0.35, 0.25) {$\widehat{\Phi}$};
\end{tikzpicture}}}
=\sum_j \vcenter{\hbox{
\begin{tikzpicture}[scale=1.5]
\path [fill=gray!40] (0.3,0.7) .. controls +(0, -0.4) and +(0, -0.4) .. (1, 0.7)--(1, -0.2)..  controls +(0, 0.4) and +(0, 0.4).. (0.3,-0.2) --(0.3, 0.7);
\path [fill=gray!40] (0, 1.2) rectangle (0.3, -0.8);
\path [fill=gray!40] (1, 1.2) rectangle (1.3, -0.8);
\draw (0.3,1.2)--(0.3,0.7) .. controls +(0, -0.4) and +(0, -0.4) .. (1, 0.7)--(1, 1.2);
\draw (0.3, -0.8)--(0.3,-0.2) .. controls +(0, 0.4) and +(0, 0.4) .. (1, -0.2)--(1, -0.8);
\draw [fill=white] (0.1,0.7) rectangle (0.5, 1.1);
\node at (0.3, 0.9) {\tiny $K_j$};
\draw [fill=white] (0.1,-0.2) rectangle (0.5, -0.6);
\node at (0.3, -0.4) {\tiny $K_j^*$};
\end{tikzpicture}}}.
\end{align*}
This gives the Kraus decomposition of $\Phi$.
In general, Equation \eqref{eq:phi} together with the spectral decomposition of $\widehat{\Phi}$ gives a Kraus decomposition of $\Phi$. 
Informally, we have
\begin{align*}
        \Phi(y)=\sum_i
        \vcenter{\hbox{
\begin{tikzpicture}[scale=1.5]
\path [fill=gray!40] (0.5, -0.4) rectangle (0.8, 0.9);
\path [fill=gray!40] (-0.3, 0) rectangle (0, 0.6);
\begin{scope}[shift={(-0.6, 0)}]
\draw [blue,fill=white, rounded corners] (0,0) rectangle (0.35, 0.5);
\node at (0.175, 0.25) {$y$};
\end{scope}
\draw [fill=gray!40] (0.5, 0)--(0.5, -0.4) (0.5, 0.5)--(0.5, 0.9);
\draw[fill=gray!40] (0.2, 0.5) .. controls +(0, 0.4) and +(0, 0.4) .. (-0.425, 0.5);
\draw[fill=gray!40] (0.2, 0) .. controls +(0, -0.4) and +(0, -0.4) .. (-0.425, 0);
\path [fill=gray!40] (0, 0.5) rectangle (0.5, 0);
\draw [fill=white] (0.2, 0.5) .. controls +(0, -0.25) and +(0, -0.25) .. (0.5, 0.5);
\draw [fill=white] (0.2, 0) .. controls +(0, 0.25) and +(0, 0.25) .. (0.5, 0);
\draw [fill=white] (0, 0.4) rectangle (0.4, 0.6);
\node at (0.2, 0.5) {\tiny $K_i$};
\draw [fill=white] (0, -0.1) rectangle (0.4, 0.1);
\node at (0.2, 0) {\tiny $K_i^*$};
\end{tikzpicture}}}
        +\sum_j\vcenter{\hbox{
\begin{tikzpicture}[scale=1.5]
\path [fill=gray!40] (0.5, -0.4) rectangle (0.8, 0.9);
\path [fill=gray!40] (-0.3, 0) rectangle (0, 0.6);
\draw [blue,fill=white] (0,0) rectangle (0.7, 0.5);
\node at (0.35, 0.25) {$S_j$};
\begin{scope}[shift={(-0.6, 0)}]
\draw [blue,fill=white, rounded corners] (0,0) rectangle (0.35, 0.5);
\node at (0.175, 0.25) {$y$};
\end{scope}
\draw [fill=gray!40] (0.5, 0)--(0.5, -0.4) (0.5, 0.5)--(0.5, 0.9);
\draw[fill=gray!40] (0.2, 0.5) .. controls +(0, 0.4) and +(0, 0.4) .. (-0.425, 0.5);
\draw[fill=gray!40] (0.2, 0) .. controls +(0, -0.4) and +(0, -0.4) .. (-0.425, 0);
\end{tikzpicture}}},
\end{align*}
where $\{S_j\}_j$ is an orthogonal family of multiples of projections.
\end{remark}

\begin{proposition}\label{prop:cpmapeq}
 Suppose $\mathcal{N}\subseteq\mathcal{M}$ is a subfactor of finite index and $\Phi\in \CB$.
Then $\Phi$ is completely positive if and only if $\widehat{\Phi}$ is positive.
\end{proposition}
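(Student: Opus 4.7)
I would split the equivalence into its two implications and exploit the explicit formulas already established.

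For the implication $\widehat{\Phi}\geq 0\Rightarrow \Phi\in \CP$, the plan is to use Proposition \ref{prop:formula}, which gives
\begin{align*}
\Phi(x)=\mu^{3/2}E_{\mathcal{M}}(e_2e_1\widehat{\Phi}\, x\, e_1 e_2).
\end{align*}
Since $\widehat{\Phi}\in\mathcal{M}'\cap \mathcal{M}_2$ is positive, I would factor it as $\widehat{\Phi}=b^*b$ with $b\in\mathcal{M}'\cap\mathcal{M}_2$. Using that $b$ commutes with every $x\in\mathcal{M}$ we get $\widehat{\Phi}\, x=b^*xb$, whence
\begin{align*}
\Phi(x)=\mu^{3/2}E_{\mathcal{M}}\bigl((be_1e_2)^*\, x\, (be_1e_2)\bigr).
\end{align*}
This realizes $\Phi$ as the composition of the map $z\mapsto c^*zc$ on $\mathcal{M}_2$ (with $c=be_1e_2$) and the normal conditional expectation $E_{\mathcal{M}}:\mathcal{M}_2\to\mathcal{M}$; both are completely positive, so $\Phi$ is completely positive.

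For the converse $\Phi\in \CP\Rightarrow \widehat{\Phi}\geq 0$, I would test positivity on the dense set of vectors $\xi=\sum_i x_i e_1 y_i\Omega_1$ in $L^2(\mathcal{M}_1)$ with $x_i,y_i\in\mathcal{M}$. Using Equation \eqref{eq:fouriermultiple} I would expand
\begin{align*}
\langle \widehat{\Phi}\xi,\xi\rangle=\mu^{1/2}\sum_{i,l,j}\bigl\langle x_i \eta_j^* e_1\Phi(\eta_j)y_i\Omega_1,\ x_l e_1 y_l\Omega_1\bigr\rangle,
\end{align*}
then collapse each $e_1\cdots e_1$ via $e_1 z e_1=E_{\mathcal{N}}(z)e_1$ and the identity $\tau_1(ae_1)=\mu^{-1}\tau(a)$ for $a\in\mathcal{M}$. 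After reshuffling the Pimsner-Popa sum, the expression rewrites as
\begin{align*}
\langle \widehat{\Phi}\xi,\xi\rangle=\tau\bigl(\Phi^{(n)}([A_l^*A_i])\, [y_iy_l^*]\bigr)
\end{align*}
for suitable $A_i\in\mathcal{M}$ built from $x_i$ and the basis $\{\eta_j\}$, where $[A_l^*A_i]\in M_n(\mathcal{M})^+$ and $[y_iy_l^*]\in M_n(\mathcal{M})^+$. The $n$-positivity of $\Phi$ forces the right-hand side to be non-negative, yielding $\widehat{\Phi}\geq 0$.

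The main obstacle is the bookkeeping in the forward direction: one must carefully unwind the two Jones projections, repeatedly invoke $e_1\mathcal{M} e_1=E_{\mathcal{N}}(\mathcal{M})e_1$, and reindex the Pimsner-Popa sum so that what remains is visibly a pairing of two positive matrices through $\Phi^{(n)}$. A cleaner alternative, which I would flag as a backup, is the pictorial route: the two squares in \eqref{eq:Fourier transform}--\eqref{eq:convolution picture} show that $\Phi$ is the convolution operator by $\widehat{\Phi}$, and a standard Choi-type argument in the planar algebra $\mathscr{P}^{\mathcal{N}\subseteq\mathcal{M}}$ identifies complete positivity of the bimodule multiplier with positivity of the corresponding $2$-box, giving the equivalence without choosing bases.
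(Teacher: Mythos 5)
Your proposal matches the paper's proof in both directions: the paper also obtains complete positivity from $\widehat{\Phi}\geq 0$ by writing $\Phi(x)$ as a constant times $E_{\mathcal{M}}\bigl((\widehat{\Phi}^{1/2}e_1e_2)^*\,x\,(\widehat{\Phi}^{1/2}e_1e_2)\bigr)$ via Proposition \ref{prop:formula} and the fact that $\widehat{\Phi}^{1/2}\in\mathcal{M}'\cap\mathcal{M}_2$ commutes with $x$, and for the converse it likewise tests $\langle\widehat{\Phi}\xi,\xi\rangle$ on $\xi=\sum_i x_ie_1y_i$, using the bimodule property and the Pimsner--Popa basis to reduce the sum to $\sum_{i,j}\tau(\Phi(x_j^*x_i)y_iy_j^*)\geq 0$ (your "suitable $A_i$" are just the $x_i$ themselves). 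So your argument is correct and essentially identical to the paper's, up to bookkeeping of the constant and the slightly more explicit matrix pairing.
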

\begin{proof}
Suppose that $\widehat{\Phi}$ is positive. 
By Proposition \ref{prop:formula}, we see that 
\begin{align*}
    \Phi(x)=\mu^{3/2} E_{\mathcal{M}}(e_2e_1 \widehat{\Phi}^{1/2} x \widehat{\Phi}^{1/2} e_1e_2), \quad \text{ for all } x\in \mathcal{M}.
\end{align*}
This implies that $\Phi$ is completely positive.

Suppose $\Phi$ is completely positive. 
For any $\displaystyle \xi=\sum_{i=1}^m x_ie_1 y_i$, we have
\begin{align*}
\langle \widehat{\Phi}\xi,\xi\rangle
=&\sum_{i,j=1}^m \sum_{k=1}^n \tau (E_{\mathcal{N}}(x_j^*x_i\eta_k^*)\Phi(\eta_k)y_iy_j^* )\\
=&\sum_{i,j=1}^m \tau \left(\Phi\left(\sum_{k=1}^nE_{\mathcal{N}}(x_j^*x_i\eta_k^*)\eta_k\right)y_iy_j^*\right)\\
=&\sum_{i,j=1}^m \tau (\Phi(x_j^*x_i)y_iy_j^*)
\geq 0.
\end{align*}
The last inequality is due to the complete positivity of $\Phi$.
\end{proof}
\begin{remark}
    Suppose $\mathcal{N}\subseteq\mathcal{M}$ is a subfactor of finite index and $\mathscr{P}$ is the subfactor planar algebra.
A 2-box $y\in\mathscr{P}_{2,+}\cong \mathcal{N}'\cap \mathcal{M}_1$ is called $\mathfrak{F}$-positive if its Fourier transform $\mathfrak{F}(y)$ is positive (see \cite[Definition 2.6]{HJLW23}).
Since $\widehat{\Phi}=\mathfrak{F}^{-1}(y_{\Phi})$ and $\mathfrak{F}^2$ is an anti $*$-isomorphism, we see that $\Phi$ is completely positive  if and only if $y_{\Phi}$ is $\mathfrak{F}$-positive  by Proposition \ref{prop:cpmapeq}.
This $\mathfrak{F}$-positivity is a generalization of the complete positivity of a  quantum channel in quantum information. 
Proposition \ref{prop:cpmapeq} generalizes Choi's Theorem 2 of  \cite{Cho75}: a linear map $\Phi$ on  $M_n(\mathbb{C})$ is completely positive if and only if its Choi's matrix is positive-semidefinite.
%  An element $y\in \mathcal{N}'\cap \mathcal{M}_1$ is called $\mathfrak{F}$-positive if $\mathfrak{F}(y)$ is positive (see \cite[Definition 2.6]{HJLW23}).
 %   By Proposition \ref{prop:cpmapeq}, we see that $y$ is $\mathfrak{F}$-positive if and only if $\Theta_y$ is completely positive.
\end{remark}

\begin{remark}\label{rem:examples}
By a direct computation, we have that $\widehat{id}=\mu^{3/2}e_2$ and $\widehat{E_{\mathcal{N}}}=\mu^{1/2}\mathbf{1}$.
\end{remark}

\subsection{Pimsner-Popa inequality}
In this subsection, we see the Pimsner-Popa inequality for completely positive bimodule maps.
\begin{lemma} \label{lem:exists}
Suppose $\mathcal{N}\subseteq\mathcal{M}$ is a subfactor of finite index $\mu$, $\mu>0$, and $\Phi\in \CB$. 
Then we have that 
\begin{align}\label{eq:four3}
   \left|(\Phi(\eta_k\eta_j^*))_{j,k=1}^m\right| \leq \left\| (\Phi(\eta_k\eta_j^*))_{j,k=1}^m \right\| (E_{\mathcal{N}}(\eta_k\eta_j^*))_{j,k=1}^m.
\end{align}
Consequently, 
\begin{align*}
    \|\widehat{\Phi}\|\leq \mu^{1/2}\left\| (\Phi(\eta_k\eta_j^*))_{j,k=1}^m \right\|.
\end{align*}
\end{lemma}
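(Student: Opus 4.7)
The plan is to first establish the matrix inequality using the $\mathcal{N}$-bimodule property of $\Phi$ together with the Pimsner–Popa basis expansion, and then to derive the norm bound by realizing $\widehat{\Phi}$ as a $V\cdot V^*$-conjugate of the matrix $\mu^{1/2}(\Phi(\eta_k\eta_j^*))$ for a natural coisometry $V\colon L^2(\mathcal{M})^m\to L^2(\mathcal{M}_1)$.

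For the matrix inequality, denote by $X\in M_m(\mathcal{M})$ and $E\in M_m(\mathcal{N})$ the matrices with $(k,j)$-entries $\Phi(\eta_k\eta_j^*)$ and $E_{\mathcal{N}}(\eta_k\eta_j^*)$. Viewing the matrix $(\eta_k\eta_j^*)_{k,j}$ as $uu^*$ for the column $u=(\eta_k)_{k}$, complete positivity of $\Phi$ and $E_{\mathcal{N}}$ gives $X\geq 0$ and $E\geq 0$. I would first check that $E$ is a projection: applying $E_{\mathcal{N}}$ to the basis identity $\eta_k\eta_l^*=\sum_j E_{\mathcal{N}}(\eta_k\eta_j^*)\eta_j\eta_l^*$ yields $E^2=E$ directly. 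The $\mathcal{N}$-bimodule property of $\Phi$ combined with basis expansion then gives
\[
(EX)_{kl}=\sum_j E_{\mathcal{N}}(\eta_k\eta_j^*)\Phi(\eta_j\eta_l^*)=\Phi\!\left(\sum_j E_{\mathcal{N}}(\eta_k\eta_j^*)\eta_j\eta_l^*\right)=\Phi(\eta_k\eta_l^*)=X_{kl},
\]
so $EX=X$ and, by taking adjoints, $XE=X$. Therefore $X=EXE\leq\|X\|E^2=\|X\|E$, and since $X\geq 0$ we have $|X|=X$, yielding the asserted inequality.

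For the norm estimate, I would introduce $V\colon L^2(\mathcal{M})^m\to L^2(\mathcal{M}_1)$ by $V(a_1\Omega,\ldots,a_m\Omega)=\mu^{1/2}\sum_k\eta_k^*e_1 a_k\Omega_1$. A direct calculation of adjoints using $\tau_1(e_1 x)=\mu^{-1}\tau(x)$ for $x\in\mathcal{M}$ produces $V^*(\alpha e_1\beta\Omega_1)=\mu^{-1/2}\bigl(E_{\mathcal{N}}(\eta_k\alpha)\beta\Omega\bigr)_{k}$; then the second form of the Pimsner–Popa expansion $\alpha=\sum_k\eta_k^*E_{\mathcal{N}}(\eta_k\alpha)$ gives $VV^*=\mathbf{1}_{L^2(\mathcal{M}_1)}$ on the dense subspace of vectors $\alpha e_1\beta\Omega_1$, and hence on all of $L^2(\mathcal{M}_1)$. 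Next, applying $V^*\widehat{\Phi}V$ to $\vec a\,\Omega$, using formula \eqref{eq:fouriermultiple} and invoking the same basis expansion as in the first part, shows $V^*\widehat{\Phi}V=\mu^{1/2}X$. Finally, since $VV^*=\mathbf{1}$ we can write $\widehat{\Phi}=V(\mu^{1/2}X)V^*$, and $\|V\|=\|V^*\|\leq 1$ yields $\|\widehat{\Phi}\|\leq\mu^{1/2}\|X\|$.

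The main obstacle is discovering the correct intertwiner $V$: once the defining formula is in hand, both $VV^*=\mathbf{1}$ and $V^*\widehat{\Phi}V=\mu^{1/2}X$ reduce to applications of the two dual forms of the Pimsner–Popa basis together with the bimodule property. A minor bookkeeping pitfall is tracking the $\mu$-factors arising from the normalizations $\tau_1|_{\mathcal{M}}=\tau$ and $\tau_1(e_1 x)=\mu^{-1}\tau(x)$, which must match up so that $VV^*$ is genuinely the identity rather than a scalar multiple of it.
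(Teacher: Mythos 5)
Your second half is sound: the coisometry $V(a_1\Omega,\dots,a_m\Omega)=\mu^{1/2}\sum_k\eta_k^*e_1a_k\Omega_1$ indeed satisfies $VV^*=\mathbf{1}$ and $V^*\widehat{\Phi}V=\mu^{1/2}X$ by \eqref{eq:fouriermultiple} and the basis expansions, so $\widehat{\Phi}=\mu^{1/2}VXV^*$ gives the norm bound; this is essentially a cleaner, more explicit packaging of the estimate the paper asserts directly from \eqref{eq:fouriermultiple}, and it does not use positivity anywhere, which is as it should be.

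The gap is in the first half. The lemma assumes only $\Phi\in\mathbf{CB}_{\mathcal{N}}(\mathcal{M})$, i.e.\ a bounded bimodule map, not a completely positive one; that is precisely why the statement \eqref{eq:four3} carries the absolute value $|X|$ on the left. Your argument invokes complete positivity of $\Phi$ twice in an essential way: first to conclude $X=(\Phi(\eta_k\eta_j^*))\geq0$ and hence $|X|=X$, and second in the step ``by taking adjoints, $XE=X$'' --- taking adjoints of $EX=X$ only yields $X^*E=X^*$, which is equivalent to $EX=X$ again and gives $XE=X$ only when $X$ is self-adjoint. For a general bounded bimodule map neither positivity nor self-adjointness of $X$ is available, so as written your proof establishes the lemma only under an extra hypothesis that is not there (the paper later applies the lemma to CP maps, but it also uses the general form, and the stated conclusion $\|\widehat{\Phi}\|\leq\mu^{1/2}\|X\|$ is for all of $\mathbf{CB}_{\mathcal{N}}(\mathcal{M})$). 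The repair stays entirely inside your framework: prove $XE=X$ by the computation symmetric to your $EX=X$ one, namely $\sum_j\Phi(\eta_k\eta_j^*)E_{\mathcal{N}}(\eta_j\eta_l^*)=\Phi\bigl(\eta_k\bigl(\sum_jE_{\mathcal{N}}(\eta_l\eta_j^*)\eta_j\bigr)^*\bigr)=\Phi(\eta_k\eta_l^*)$, using bimodularity on the right; then $X^*X=EX^*XE$ shows the support projection of $|X|=(X^*X)^{1/2}$ lies under the projection $E$, whence $|X|\leq\|X\|E$. With that one-line fix your route is correct and in fact slightly more general than the paper's, which uses the orthogonality $E_{\mathcal{N}}(\eta_k\eta_j^*)=0$ for $j\neq k$ of a special Pimsner--Popa basis, whereas your argument (that $E$ is a projection and $EX=XE=X$) works for an arbitrary basis.
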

\begin{proof}
Note that $E_{\mathcal{N}}(\eta_k \eta_j^*)=\delta_{j,k}$ for $j\neq k$ and $E_{\mathcal{N}}(\eta_j \eta_j^*)$ is a projection for $j=1,\ldots, m$.
Recalling that $\eta_j^*e_1$ is a partial isometry in $\mathcal{M}_1$, we have
\begin{align*}
    \eta_j^* e_1 \eta_j\eta_j^* e_1=\eta_j^* e_1.
\end{align*}
This implies that $\eta_j^*E_{\mathcal{N}}(\eta_j\eta_j^*)=\eta_j^*$.
Hence 
\begin{align*}
\Phi(\eta_j\eta_j^*)= \Phi(\eta_j\eta_j^*E_{\mathcal{N}}(\eta_j\eta_j^*)) = \Phi(\eta_j\eta_j^*)E_{\mathcal{N}}(\eta_j\eta_j^*).
\end{align*}
We then see that the support of $ (\Phi(\eta_k\eta_j^*))_{j,k=1}^m$ is contained in the support of $(E_{\mathcal{N}}(\eta_k\eta_j^*))_{j,k=1}^m$ and Equation \eqref{eq:four3} is true.
Note that
\begin{align*}
    \left\| \sum_i\sum_{j=1}^m x_i \eta_j^* e_1\Phi(\eta_j)y_i\Omega_1\right\|
    \leq \left\| (\Phi(\eta_k\eta_j^*))_{j,k=1}^m \right\| \left\| \sum_{i}x_ie_1y_i\Omega_1\right\|.
\end{align*}
We see that
\begin{align*}
\|\widehat{\Phi}\|\leq \mu^{1/2}\left\| (\Phi(\eta_k\eta_j^*))_{j,k=1}^m \right\|.
\end{align*}
This completes the proof.
\end{proof}

\begin{corollary}[Pimsner-Popa Inequality for CPB maps]
Suppose $\mathcal{N}\subseteq \mathcal{M}$ is a subfactor and $\Phi\in \CP$. Then
    \begin{align}\label{eq:pimpopineq}
    \Phi\leq \left\| (\Phi(\eta_k\eta_j^*))_{j,k=1}^m \right\| E_{\mathcal{N}},
    \end{align}
    i.e., $\displaystyle \left\|(\Phi(\eta_k\eta_j^*))_{j,k=1}^m \right\| E_{\mathcal{N}}-\Phi$ is completely positive.
Furthermore,
    \begin{align}\label{eq:pimpopineq2}
        \Phi\leq \mu\left\| \Phi \right\|_{cb} E_{\mathcal{N}},
    \end{align}
    where $\|\Phi\|_{cb}$ is the completely bounded norm.
\end{corollary}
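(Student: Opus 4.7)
The plan is to reduce both inequalities to estimates on the Fourier multiplier $\widehat{\Phi}$, invoking Proposition \ref{prop:cpmapeq} (the equivalence between complete positivity of a bimodule map and positivity of its Fourier multiplier), Remark \ref{rem:examples} (which gives $\widehat{E_{\mathcal{N}}} = \mu^{1/2}\mathbf{1}$), and Lemma \ref{lem:exists} (which yields $\|\widehat{\Phi}\| \leq \mu^{1/2}\|C\|$ for $C := (\Phi(\eta_k\eta_j^*))_{j,k=1}^m$). For \eqref{eq:pimpopineq}, since $\Phi \in \CP$ the multiplier $\widehat{\Phi}$ is positive, so
\begin{align*}
0 \leq \widehat{\Phi} \leq \|\widehat{\Phi}\|\mathbf{1} \leq \mu^{1/2}\|C\|\mathbf{1} = \|C\|\widehat{E_{\mathcal{N}}}.
\end{align*}
Hence $\widehat{\|C\|E_{\mathcal{N}} - \Phi}$ is positive, and applying Proposition \ref{prop:cpmapeq} to the bimodule map $\|C\|E_{\mathcal{N}} - \Phi$ delivers \eqref{eq:pimpopineq}.

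For \eqref{eq:pimpopineq2}, given \eqref{eq:pimpopineq} it suffices to show $\|C\| \leq \mu\|\Phi\|_{cb}$. Write $C = (\Phi \otimes \id_m)(A)$ with the self-adjoint matrix $A := (\eta_k\eta_j^*)_{j,k=1}^m \in M_m(\mathcal{M})$; then $\|C\| \leq \|\Phi\|_{cb}\|A\|$, so everything reduces to $\|A\| \leq \mu$. I would compute $A^2$ directly: a brief manipulation gives
\begin{align*}
(A^2)_{jk} = \sum_l \eta_l\eta_j^*\eta_k\eta_l^* = \Psi(\eta_j^*\eta_k),
\end{align*}
where $\Psi(x) := \sum_l \eta_l x\eta_l^*$ is a completely positive map with $\Psi(\mathbf{1}) = \sum_l \eta_l\eta_l^* = \mu\mathbf{1}$ (following from $\sum_l \eta_l e_1 \eta_l^* = \mathbf{1}$ together with $E_{\mathcal{M}}(e_1) = \mu^{-1}\mathbf{1}$). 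Thus $A^2 = (\Psi \otimes \id_m)(E)$, where $E := (\eta_j^*\eta_k)_{j,k} = U^*U$ for the row $U := (\eta_1, \ldots, \eta_m)$ is positive with $\|E\| = \|UU^*\| = \|\sum_j \eta_j\eta_j^*\| = \mu$. Positivity of $\Psi \otimes \id_m$ combined with $E \leq \mu\mathbf{1}$ then forces $A^2 \leq \mu\cdot(\Psi \otimes \id_m)(\mathbf{1}) = \mu^2\mathbf{1}$, hence $\|A\| \leq \mu$ as required.

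The main obstacle is the bound $\|A\| \leq \mu$. The natural temptation is to factor $A$ as a positive $VV^*$, but the matrix that so factors is the block transpose $A^T = (\eta_j\eta_k^*) = VV^*$ with column $V = (\eta_j)$, and block transpose on $M_m(\mathcal{M})$ is not an isometry when $\mathcal{M}$ is noncommutative (even elementary $M_2$-examples show $\|A\| < \|A^T\| = \mu$ can occur). Passing through $A^2$ and the auxiliary CP map $\Psi$ is the maneuver that bypasses this obstruction and turns the problem into a routine application of positivity.
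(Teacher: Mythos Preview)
Your argument for \eqref{eq:pimpopineq} is correct and is exactly the paper's approach: Lemma~\ref{lem:exists} gives $\|\widehat{\Phi}\|\le\mu^{1/2}\|C\|$, and together with $\widehat{E_{\mathcal{N}}}=\mu^{1/2}\mathbf{1}$ and Proposition~\ref{prop:cpmapeq} this yields the CP ordering $\Phi\le\|C\|E_{\mathcal{N}}$.

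For \eqref{eq:pimpopineq2} the paper also reduces to $\|A\|\le\mu$ and dispatches it with the single line $\|(\eta_k\eta_j^*)_{j,k}\|=\|\sum_j\eta_j^*\eta_j\|=\mu$. That equality is precisely the rank-one factorization $A^T=CC^*$ that you (rightly) flag as suspect; indeed for $\mathcal{N}=\mathbb{C}\subset M_2(\mathbb{C})$ with the matrix-unit basis one finds $\|A\|=2<4=\|A^T\|$, so the paper's equality is not literally correct, although only the inequality is needed. Your detour through $A^2$ and the auxiliary CP map $\Psi$ is a genuinely different and more honest route than the paper's one-liner.

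There is, however, a gap in your computation at the step $\sum_l\eta_l\eta_l^*=\mu\mathbf{1}$. You derive it from ``$\sum_l\eta_le_1\eta_l^*=\mathbf{1}$'', but the paper's basis convention $x=\sum_jE_{\mathcal{N}}(x\eta_j^*)\eta_j$ (equivalently $x=\sum_j\eta_j^*E_{\mathcal{N}}(\eta_jx)$) yields the operator identity $\sum_j\eta_j^*e_1\eta_j=\mathbf{1}$, with $\eta$ and $\eta^*$ in the \emph{opposite} order. Applying $E_{\mathcal{M}}$ therefore gives $\sum_j\eta_j^*\eta_j=\mu\mathbf{1}$, not $\sum_j\eta_j\eta_j^*=\mu\mathbf{1}$. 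Both your bound $\|E\|=\|\sum_j\eta_j\eta_j^*\|$ and $\Psi(\mathbf{1})=\sum_l\eta_l\eta_l^*$ rest on the unestablished version; what you wrote would follow if $\{\eta_l^*\}$ were also a Pimsner--Popa basis in the paper's sense, which is an extra hypothesis not supplied by the definitions at hand.
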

\begin{proof}
    By Equality \eqref{eq:four3}, we see that Equation \eqref{eq:pimpopineq} is true and the constant is optimal.
    Note that
        \begin{align*}
        \|(\eta_k\eta_j^*)_{j,k=1}^m\|=\left\|\sum_{j=1}^m \eta_j^*\eta_j\right\|=\mu.
    \end{align*}
    We see Equation \eqref{eq:pimpopineq2} is true.
\end{proof}

\begin{remark}
    We take $\Phi=id$ in Equation \eqref{eq:pimpopineq2}, then we obtain the Pimsner-Popa inequality \eqref{eq:PP inequality}.
\end{remark}

\begin{remark}\label{rem:order relation}
Suppose $\Phi_1, \Phi_2\in \CP$.
Suppose $c>0$.
By the proof of Lemma \ref{lem:exists},
we see that $\Phi_1\leq c\Phi_2$ if and only if $\widehat{\Phi}_1 \leq c \widehat{\Phi}_2$.
\end{remark}

\section{Eigenspaces for Phase Groups}\label{sec:PF theory for CPB}
In this section, we reformulate the Perron-Frobenius theorem for completely positive bimodule maps.
We describe the structures of the Perron-Frobenius spaces for CPB maps using a maximal-support eigenvector.
Furthermore, we characterize the eigenspaces for phase groups of bimodule quantum channels.
\subsection{Perron-Frobenius theorem for CPB maps}
We first reformulate the Perron-Frobenius theorem for completely positive bimodule maps.
Recall that the spectrum $\sigma(\Phi)$ of a bounded linear map $\Phi:\mathcal{M}\to \mathcal{M}$ is given by 
\begin{align*}
    \sigma(\Phi)=\{c\in\mathbb{C}: c\id-\Phi \text{ is not invertible.}\}
\end{align*}
We denote by $r(\Phi)$ the spectral radius of $\Phi$.
Note that when $\Phi$ is a bimodule map, the inverse of $c\id -\Phi$ is also a bimodule map for $c\notin \sigma(\Phi)$.

Recall that for any $\Phi\in\CB$, there exists an element $y_{\Phi}\in\mathcal{N}'\cap\mathcal{M}_1$ such that $\Phi(x)=\Theta_{y_{\Phi}}(x)$ for $x\in\mathcal{M}$.
We denote by $\sigma(y_{\Phi})$ the spectrum of $y_{\Phi}$.
The following lemma shows that the two spectrums $\sigma(\Phi)$ and $\sigma(y_{\Phi})$ coincide.
\begin{lemma}\label{lem:spectrum equal}
    Suppose  $\mathcal{N}\subseteq\mathcal{M}$ is a subfactor of finite index and $\Phi\in\CB$.
    Then 
    \begin{align*}
        \sigma(\Phi)=\sigma(y_\Phi).
    \end{align*}
\end{lemma}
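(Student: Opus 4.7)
The plan is to exploit the algebraic identification $\CB \cong \mathcal{N}'\cap\mathcal{M}_1$ given by $\Phi\leftrightarrow y_{\Phi}$ (equivalently $y\mapsto \Theta_y$). By Remark \ref{rem:operation bimodule map} this bijection is linear and converts composition to multiplication, and it sends $\id$ to $\mathbf{1}$ since $\Theta_{\mathbf{1}}=\id$. Consequently, for any $c\in\mathbb{C}$, the bounded bimodule map $c\,\id - \Phi$ corresponds under the bijection to $c\mathbf{1}-y_{\Phi}\in \mathcal{N}'\cap\mathcal{M}_1$.

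The core step is to show that $c\,\id - \Phi$ is invertible as a bounded linear map on $\mathcal{M}$ if and only if $c\mathbf{1}-y_{\Phi}$ is invertible in $\mathcal{N}'\cap\mathcal{M}_1$. The ``if'' direction is immediate: given $z\in \mathcal{N}'\cap\mathcal{M}_1$ with $(c\mathbf{1}-y_{\Phi})z = z(c\mathbf{1}-y_{\Phi})=\mathbf{1}$, the map $\Theta_z$ lies in $\CB$ and the composition law together with $\Theta_{\mathbf{1}}=\id$ forces $\Theta_z$ to be a two-sided inverse of $c\,\id-\Phi$. For the converse, if $c\,\id-\Phi$ admits a bounded inverse $T$ on $\mathcal{M}$, I will verify that $T$ is automatically a bimodule map: applying $T$ to the identity $(c\,\id-\Phi)(y_1 x y_2)=y_1(c\,\id-\Phi)(x)y_2$ and substituting $x=T(z)$ gives $T(y_1 z y_2)=y_1 T(z) y_2$ for all $y_i\in\mathcal{N}$ and $z\in\mathcal{M}$. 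Hence $T\in \CB$, its image $y_T\in \mathcal{N}'\cap\mathcal{M}_1$ is a two-sided inverse of $c\mathbf{1}-y_{\Phi}$ in the algebra, and invertibility in $\mathcal{N}'\cap\mathcal{M}_1$ is the same as spectral invertibility of $y_{\Phi}$ (the spectrum of an element of a unital C$^*$-algebra is intrinsic, so the spectrum computed in $\mathcal{N}'\cap\mathcal{M}_1$, in $\mathcal{M}_1$, or in $B(L^2(\mathcal{M}))$ coincide).

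Combining these two steps yields the chain of equivalences $c\in\sigma(\Phi) \iff c\,\id-\Phi \text{ not invertible on } \mathcal{M} \iff c\mathbf{1}-y_{\Phi} \text{ not invertible in } \mathcal{N}'\cap\mathcal{M}_1 \iff c\in\sigma(y_{\Phi})$, which is the desired equality. I do not anticipate a serious obstacle; the only mildly subtle point is the automatic bimodularity of the inverse, and the only technical input beyond the identifications already set up in Section \ref{sec:Completely positive bimodule map} is the intrinsicness of C$^*$-spectra.
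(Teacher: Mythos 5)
Your proposal is correct and follows essentially the same route as the paper: identify $c\,\id-\Phi$ with $\Theta_{c\mathbf{1}-y_\Phi}$ via the unital algebra isomorphism $\CB\cong\mathcal{N}'\cap\mathcal{M}_1$ and transfer invertibility both ways, the converse direction resting on the automatic bimodularity of the inverse, which the paper itself records in the remark immediately preceding the lemma. Your extra observations (the substitution $x=T(z)$ argument and the intrinsicness of the C$^*$-spectrum) simply make explicit what the paper leaves implicit.
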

\begin{proof}
    It follows from Remarks \ref{rem:bimodule map} and \ref{rem:operation bimodule map} that for any $c\in\mathbb{C}$,
    \begin{align*}
        c\id-\Phi=\Theta_{c\mathbf{1}-y_{\Phi}}.
    \end{align*}
    Hence $c\id-\Phi$ is invertible if and only if $c\mathbf{1}-y_{\Phi}$ invertible.
    We obtain the conclusion.
\end{proof}
\begin{remark}
    Since the subfactor $\mathcal{N}\subseteq\mathcal{M}$ is finite-index, $\mathcal{N}'\cap\mathcal{M}_1$ is a finite-dimensional $C^*$-algebra.
    Hence $\sigma(y_\Phi)$ is a finite set.
    By Lemma \ref{lem:spectrum equal}, $\sigma(\Phi)$ is also a finite set.
\end{remark}
 
\begin{proposition}\label{prop:PF theorem for bimodule map}
   Suppose  $\mathcal{N}\subseteq\mathcal{M}$ is a subfactor of finite index and $\Phi\in\mathbf{CP}_{\mathcal{N}}(\mathcal{M})$. 
Then there exists a nonzero $\Psi\in\mathbf{CP}_{\mathcal{N}}(\mathcal{M})$ such that
\begin{align}
        \Psi\Phi=\Phi\Psi=r\Psi,
    \end{align}
where $r$ is the spectral radius of $\Phi$.
\end{proposition}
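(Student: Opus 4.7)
The plan is to exhibit a common eigenvector at $r$ through a Ces\`aro averaging argument in the finite-dimensional space of bounded bimodule maps.

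Recall first that the identification $\Phi\leftrightarrow y_\Phi$ realizes $\CB$ as the finite-dimensional $C^*$-algebra $\mathcal{N}'\cap\mathcal{M}_1$, with composition of bimodule maps corresponding to multiplication in this algebra; in particular, by Lemma \ref{lem:spectrum equal}, $\sigma(\Phi) = \sigma(y_\Phi)$ is a finite set, and the spectral radius $r$ is attained as an eigenvalue of $\Phi$. Standard finite-dimensional Jordan-form estimates give $\|\Phi^n/r^n\|=O(n^{k-1})$, where $k\geq 1$ is the size of the largest Jordan block of $y_\Phi$ on the spectral-radius circle.

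For the main case $r>0$, I would introduce the Ces\`aro averages $A_N := \frac{1}{N}\sum_{n=0}^{N-1} \Phi^n/r^n$. Each $A_N\in\CP$ (as a positive combination of composition powers of $\Phi$), and $A_N\neq 0$ since $A_N\geq \id/N$ from the $n=0$ term. A direct telescoping gives
\[
\Phi A_N - rA_N \;=\; A_N\Phi - rA_N \;=\; \frac{r}{N}\Bigl(\frac{\Phi^N}{r^N}-\id\Bigr),
\]
whose norm is $O(N^{k-2})$. A matching lower bound $\|A_N\| = \Theta(N^{k-1})$ also holds: the top generalized eigenvector of $y_\Phi$ at $\lambda=r$ contributes a dominant $\Theta(N^{k-1})$-term to $A_N$, while oscillatory contributions from other eigenvalues of modulus $r$ are only $O(N^{k-2})$ by Abel summation, and strictly subspectral eigenvalues contribute exponentially small tails. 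Normalizing $\tilde A_N := A_N/\|A_N\|\in\CP$ yields $\|\tilde A_N\|=1$ and
\[
\|\Phi\tilde A_N - r\tilde A_N\|\;=\;\|\tilde A_N\Phi - r\tilde A_N\|\;=\;O(1/N)\to 0.
\]
The closed unit ball of the finite-dimensional space $\CB$ is compact, so one extracts a convergent subsequence $\tilde A_{N_j}\to\Psi$. Since $\CP$ is norm-closed, $\Psi\in\CP$; since $\|\Psi\|=1$, $\Psi\neq 0$; and passing to the limit gives $\Phi\Psi=\Psi\Phi=r\Psi$, as required. In the degenerate case $r=0$, $y_\Phi$ is nilpotent in $\mathcal{N}'\cap\mathcal{M}_1$, so $\Phi$ is nilpotent in $\CB$: if $\Phi=0$ take $\Psi=\id$, otherwise take $\Psi:=\Phi^{n_0-1}$, where $n_0\geq 2$ is the smallest integer with $\Phi^{n_0}=0$; this $\Psi$ is a nonzero CPB map with $\Phi\Psi=\Psi\Phi=\Phi^{n_0}=0=r\Psi$.

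The main technical obstacle is the lower bound $\|A_N\|=\Omega(N^{k-1})$ in the $r>0$ case, which is essential to guarantee that the normalized sequence $\tilde A_N$ does not collapse in the limit. The upper bound is immediate from the Jordan estimate on $\|\Phi^n/r^n\|$; the lower bound requires isolating the dominant generalized-eigenvector contribution at $\lambda=r$ in the Jordan decomposition of $y_\Phi$ and ruling out cancellations by other unit-modulus eigenvalues via Abel summation. This is a standard finite-dimensional computation, but it is the decisive input that upgrades the averaging procedure from producing merely a generalized eigenvector to producing an honest common eigenvector of $L_\Phi$ and $R_\Phi$ at $r$ that remains inside $\CP$.
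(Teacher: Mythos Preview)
Your averaging strategy is sound and does lead to a correct proof, but the justification you give for the decisive lower bound $\|A_N\|=\Omega(N^{k-1})$ is circular. You argue via ``the top generalized eigenvector of $y_\Phi$ at $\lambda=r$'', which presupposes both that $r$ itself (not merely some eigenvalue of modulus $r$) lies in $\sigma(y_\Phi)$ and that the Jordan block at $\lambda=r$ has the maximal size $k$. Neither follows from Lemma~\ref{lem:spectrum equal}; these are precisely the Perron--Frobenius facts that the proposition is meant to deliver. Without them your Abel-summation comparison collapses: if the largest block on $|\lambda|=r$ sat only at some $re^{i\theta}$ with $\theta\neq0$, the Ces\`aro mean would be $O(N^{k-2})$, the ratio $\|\Phi\tilde A_N-r\tilde A_N\|$ would not tend to zero, and the limit $\Psi$ need not be an eigenvector.

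The fix is to extract the lower bound from complete positivity rather than from the Jordan picture. Since each $\Phi^n$ is CP, $\|\Phi^n\|=\|\Phi^n(\mathbf{1})\|_\infty$, and $\Phi^n(\mathbf{1})\in(\mathcal{N}'\cap\mathcal{M})^+$; as $\tau$ is faithful on the finite-dimensional algebra $\mathcal{N}'\cap\mathcal{M}$, there is a constant $c>0$ with $\tau(a)\geq c\|a\|_\infty$ for all $a\in(\mathcal{N}'\cap\mathcal{M})^+$. Hence
\[
\|A_N\|=\|A_N(\mathbf{1})\|_\infty\;\geq\;\tau(A_N(\mathbf{1}))\;=\;\frac{1}{N}\sum_{n=0}^{N-1}\frac{\tau(\Phi^n(\mathbf{1}))}{r^n}\;\geq\;\frac{c}{N}\sum_{n=0}^{N-1}\frac{\|\Phi^n\|}{r^n}\;=\;\Omega(N^{k-1}),
\]
using only the equivalence of norms on $\CB\cong\mathcal{N}'\cap\mathcal{M}_1$ to get $\|\Phi^n\|\asymp\|y_\Phi^n\|=\Theta(r^n n^{k-1})$. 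With this in hand your compactness argument goes through verbatim. By contrast, the paper's proof simply invokes \cite[Proposition~3.1]{HJLW23} to produce a nonzero $\mathfrak{F}$-positive $x$ with $xy_\Phi=y_\Phi x=rx$ and sets $\Psi=\Theta_x$; your route is more self-contained once the gap above is closed.
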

\begin{proof}
By  \cite[Proposition 3.1]{HJLW23}, there exists a nonzero $\mathfrak{F}$-positive element $x\in\mathcal{N}'\cap\mathcal{M}_1$ such that
\begin{align*}
xy_\Phi=y_\Phi x=rx.
\end{align*}
Let $\Psi=\Theta_{x}$.
Then
\begin{align*}
    \Psi\Phi=\Theta_{x}\Theta_{y_\Phi}=\Theta_{x y_\Phi }=r\Theta_{x}=r\Psi.
\end{align*}
Similarly, $\Phi\Psi=r\Psi$.
We obtain the conclusion.
\iffalse
 Let $x=\Psi(\mathbf{1})$, then $\Phi(x)=\Phi\Psi(\mathbf{1})=r\Psi(\mathbf{1})=rx$.
Next we show that $x$ is strictly positive.
By Kadison-Schwarz inequality and the operator monotonicity of the function $f(t)=t^{1/2}$,
\begin{align*}
    \Phi(x^{1/2^n})&\leq \|\Phi(\mathbf{1})\|^{1-1/2^n}(\Phi(x))^{1/2^n}=\|\Phi(\mathbf{1})\|^{1-1/2^n}r^{1/2^n}x^{1/2^n}.
\end{align*}
 Let $n$ tends to infinity and by the normality of $\Phi$, we obtain $\Phi(\mathcal{R}(x))\leq \|\Phi(\mathbf{1})\|\mathcal{R}(x)$.
Since $\Phi$ is irreducible, then $\mathcal{R}(x)=\mathbf{1}$.
Note that $x$ is in the finite-dimensional $C^*$-algebra  $\mathcal{N}'\cap\mathcal{M}$. 
Hence $x$ is strictly positive.
 
Suppose that there exists another strictly positive element $y\in\mathcal{M}$ such that $\Phi(y)=ry$.
We define
\begin{align*}
    S:=\{t>0:\ x-ty>0\}.
\end{align*}
    It is clear that $S$ is nonempty.
    Take $t_0=\sup_{t\in S}t$.
    Then $x-t_0 y$ is positive but not strictly positive.
    However, $\Phi(x-t_0 y)=r(x-t_0 y)$.
It implies that $x=t_0y$.
So $x$ is unique.
\fi
\end{proof}
\iffalse
\begin{definition}
    Let $\mathcal{N}\subseteq\mathcal{M}$ be a type II$_1$ subfactor with index $\mu$.
    Let $\Phi:\mathcal{M}\to\mathcal{M}$ be a completely positive $\mathcal{N}$-$\mathcal{N}$-bimodule map.
    We say $\Phi$ is ergodic if for any nonzero positive element $z\in\mathcal{M}^+$, there exists a positive integer $n$ such that $\sum_{j=1}^n\Phi^j(z)$ is strictly positive.
\end{definition}

\begin{theorem}\label{thm:perron}
    Let $\mathcal{N}\subseteq\mathcal{M}$ be a type II$_1$  subfactor with index $\mu$.
    Let $\Phi:\mathcal{M}\to\mathcal{M}$ be a completely positive $\mathcal{N}$-$\mathcal{N}$-bimodule map.
     Then there exists a unique nonzero positive element $z\in\mathcal{M}^+$ such that $\Phi(z)=rz$, where $r$ is the spectral radius of $\Phi$. 
 \end{theorem}
 
\begin{proof}
    By Theorem \ref{thm:PF theorem on subfactor}, we have $\Phi_x\Phi_y(\mathbf{1})=r\Phi_y(\mathbf{1})$.
    Let $z=\Phi_y(\mathbf{1})$.
    When $\Phi$ is ergodic, $\sum_{j=1}^n\Phi^j(z)=\sum_{j=1}^n r^jz$, then $z$ is strictly positive.
Suppose that there exists another strictly positive element $w\in\mathcal{M}$ such that $\Phi(w)=rw$.
We define
\begin{align*}
    S:=\{t>0:\ z-tw>0\}.
\end{align*}
    It is clear that $S$ is nonempty.
    Take $t_0=\sup_{t\in S}t$.
    Then $z-t_0 w$ is positive but not strictly positive.
    However, $\Phi(z-t_0 w)=r(z-t_0 w)$.
It implies that $z=t_0w$.
So $z$ is unique.
\end{proof}
\fi
\begin{corollary}\label{cor:PF theorem for bimodule map}
   Suppose  $\mathcal{N}\subseteq\mathcal{M}$ is a subfactor of finite index and $\Phi\in\mathbf{CP}_{\mathcal{N}}(\mathcal{M})$. 
Then there exists a nonzero positive element $x\in\mathcal{M}$ such that
\begin{align}
        \Phi(x)=rx,
    \end{align}
where $r$ is the spectral radius of $\Phi$.
\end{corollary}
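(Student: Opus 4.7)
The plan is to deduce Corollary 4.6 directly from Proposition 4.3 by evaluating the auxiliary CPB map $\Psi$ at the identity. More precisely, Proposition 4.3 produces a nonzero $\Psi\in\mathbf{CP}_{\mathcal{N}}(\mathcal{M})$ with $\Phi\Psi=r\Psi$, and I would simply set $x:=\Psi(\mathbf{1})$ and verify that $x$ has all the required properties.

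First I would note that $x=\Psi(\mathbf{1})\in\mathcal{M}$ is positive because $\Psi$ is completely positive (hence positive) and $\mathbf{1}\geq 0$. Next I would check that $x\neq 0$. Suppose for contradiction $\Psi(\mathbf{1})=0$. Then for any $y\in\mathcal{M}^+$, since $y\leq \|y\|\mathbf{1}$ and $\Psi$ is positive, $0\leq \Psi(y)\leq \|y\|\Psi(\mathbf{1})=0$, so $\Psi(y)=0$. Decomposing arbitrary self-adjoint elements into positive and negative parts, and then arbitrary elements into real and imaginary parts, gives $\Psi\equiv 0$, contradicting the nonvanishing of $\Psi$ from Proposition 4.3. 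Hence $x\neq 0$.

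Finally I would compute
\begin{equation*}
\Phi(x)=\Phi(\Psi(\mathbf{1}))=(\Phi\Psi)(\mathbf{1})=r\Psi(\mathbf{1})=rx,
\end{equation*}
which is the required eigenvalue equation.

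I do not anticipate any real obstacle here: the work was already done in Proposition 4.3 (which in turn relies on the existence of an $\mathfrak{F}$-positive Perron eigenvector in $\mathcal{N}'\cap\mathcal{M}_1$ from \cite[Proposition 3.1]{HJLW23}), and the corollary is essentially a translation from the CPB-map statement back to an element-level statement via the unit. The only small point worth spelling out is the nonvanishing of $\Psi(\mathbf{1})$, which follows cleanly from complete positivity and the fact that $\mathbf{1}$ dominates every positive contraction.
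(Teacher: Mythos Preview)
Your proof is correct and follows exactly the approach of the paper, which also sets $x=\Psi(\mathbf{1})$ with $\Psi$ coming from the preceding proposition. You have simply made explicit the positivity and nonvanishing of $\Psi(\mathbf{1})$ that the paper leaves to the reader.
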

\begin{proof}
    Taking $x=\Psi(\mathbf{1})$ in Proposition \ref{prop:PF theorem for bimodule map}, we can obtain the result.
\end{proof}

\subsection{Perron-Frobenius spaces for CPB maps}
Next we will show that the Perron-Frobenius space for a completely positive  bimodule map contains the multiplication of a maximal-support eigenvector and an intermediate von Neumann subalgebra of $\mathcal{N}\subseteq\mathcal{M}$.
We first see the notation for the Perron-Frobenius eigenspace.
\begin{notation}
Suppose $\mathcal{N}\subseteq\mathcal{M}$ is a subfactor and $\Phi$ is a bounded bimodule map.
Let $r$ be the spectral radius of $\Phi$.
We denote the Perron-Frobenius space of $\Phi$ by
\begin{align}
    \mathcal{E}(\Phi):=\{x\in\mathcal{M}:\ \Phi(x)=rx\}.
\end{align}
We further define
    \begin{align}
       p_{max}=\bigvee_{x\in\mathcal{E}(\Phi)^+}\mathcal{R}(x).
    \end{align}
\end{notation}
The following lemma gives the Perron-Frobenius eigenvector of a completely positive bimodule map, which has the maximal range projection. 
\begin{lemma}\label{lem:max support}
Suppose $\mathcal{N}\subseteq\mathcal{M}$ is a subfactor of finite index and $\Phi\in\CP$ with $r(\Phi)=1$.
Suppose $\Phi$ is contractive.
 Then the following limit exists in weak-operator topology:
    \begin{align}\label{eq:max PF vector}
\zeta:=\lim_{N\to\omega}\frac{1}{N}\sum_{i=0}^{N-1}\Phi^{i}(p_{max}),
\end{align}
where  $\omega\in\beta\mathbb{N}\setminus\mathbb{N}$ is a free ultrafilter. 
  Moreover, we have that $\zeta$ is a positive element in $\mathcal{N}'\cap \mathcal{M}$ satisfying that $\Phi(\zeta)=\zeta$ and $\mathcal{R}(\zeta)=p_{\max}$.
%\item If $\Phi^*(\mathbf{1})\leq\mathbf{1}$, then $\zeta\in L^1(\mathcal{M})^+$.

\end{lemma}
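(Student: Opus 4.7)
The plan is to first establish that $p_{\max}$ itself lies in $\mathcal{N}'\cap \mathcal{M}$, and then to show $\zeta$ enjoys all the claimed properties by a mean-ergodic argument combined with compactness of bounded sets in the weak-operator topology.

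First I would verify that $p_{\max}\in \mathcal{N}'\cap \mathcal{M}$. For any unitary $u\in \mathcal{N}$ and any $x\in \mathcal{E}(\Phi)^+$, the bimodule property of $\Phi$ gives $\Phi(uxu^*)=u\Phi(x)u^*=r\, uxu^*$, so $uxu^*\in\mathcal{E}(\Phi)^+$ with range projection $u\mathcal{R}(x)u^*$. Taking the supremum of range projections over $\mathcal{E}(\Phi)^+$ yields $up_{\max}u^*\le p_{\max}$; conjugating back gives equality, so $p_{\max}$ commutes with every unitary in $\mathcal{N}$ and hence with $\mathcal{N}$. Applying $\Phi$ iteratively and again using the bimodule property, we find $\Phi^i(p_{\max})\in\mathcal{N}'\cap \mathcal{M}$ for every $i\ge 0$, and by contractivity $\|\Phi^i(p_{\max})\|\le 1$.

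Next, the Cesàro averages $a_N:=\tfrac{1}{N}\sum_{i=0}^{N-1}\Phi^i(p_{\max})$ form a norm-bounded net of positive elements in the weakly closed subspace $\mathcal{N}'\cap \mathcal{M}$. Since the unit ball of $\mathcal{M}$ is compact in the weak-operator topology, the ultrafilter limit $\zeta=\lim_{N\to\omega}a_N$ exists in WOT; positivity and membership in $\mathcal{N}'\cap \mathcal{M}$ pass to this limit. To see $\Phi(\zeta)=\zeta$, note the telescoping identity
\begin{equation*}
\Phi(a_N)-a_N=\tfrac{1}{N}\bigl(\Phi^N(p_{\max})-p_{\max}\bigr),
\end{equation*}
whose right-hand side has norm at most $2/N\to 0$. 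Because $\Phi$ is normal, hence continuous on bounded sets in the weak-operator topology, passing to the ultrafilter limit gives $\Phi(\zeta)=\zeta$, so $\zeta\in\mathcal{E}(\Phi)^+$ and consequently $\mathcal{R}(\zeta)\le p_{\max}$.

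For the reverse inequality, take any $x\in\mathcal{E}(\Phi)^+$, rescaled so that $\|x\|\le 1$. Then $x\le \mathcal{R}(x)\le p_{\max}$, and by positivity of $\Phi^i$ together with the eigenvector equation $\Phi^i(x)=x$ we get $x\le \Phi^i(p_{\max})$ for every $i\ge 0$. Averaging gives $x\le a_N$, and passing to the WOT-limit along $\omega$ yields $x\le \zeta$, whence $\mathcal{R}(x)\le\mathcal{R}(\zeta)$. Taking the supremum over $x\in \mathcal{E}(\Phi)^+$ gives $p_{\max}\le \mathcal{R}(\zeta)$, and therefore $\mathcal{R}(\zeta)=p_{\max}$. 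The main subtlety is the initial step: ensuring that $\mathcal{E}(\Phi)^+$ is stable under conjugation by $\mathcal{N}$ so that $p_{\max}$ is itself in $\mathcal{N}'\cap \mathcal{M}$; once this is in hand the remainder is a routine mean-ergodic argument.
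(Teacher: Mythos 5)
Your proof is correct, and the ergodic core (uniform boundedness of the Ces\`aro averages from contractivity, WOT-compactness of the ball for existence of the ultrafilter limit, the telescoping identity plus normality for $\Phi(\zeta)=\zeta$, and the order argument $x\le\Phi^i(p_{\max})$ giving $\mathcal{R}(\zeta)=p_{\max}$) coincides with the paper's argument. Where you genuinely diverge is the commutant statement: you prove \emph{first} that $p_{\max}\in\mathcal{N}'\cap\mathcal{M}$, by observing that $\mathcal{E}(\Phi)^+$ is invariant under $\mathrm{Ad}(u)$ for unitaries $u\in\mathcal{N}$ (bimodule property), so $up_{\max}u^*=p_{\max}$, and that $\Phi$ maps $\mathcal{N}'\cap\mathcal{M}$ into itself, so every average and hence the WOT-limit $\zeta$ lies in the WOT-closed set $\mathcal{N}'\cap\mathcal{M}$. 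The paper instead derives commutation \emph{a posteriori}: from $\Phi(y\zeta y)=y\zeta y$ for positive $y\in\mathcal{N}$ it gets $\mathcal{R}(y\zeta^{1/2})\le p_{\max}$, uses the already-proved identity $\mathcal{R}(\zeta)=p_{\max}$ to conclude $p_{\max}yp_{\max}=yp_{\max}$ and hence $p_{\max}y=yp_{\max}$, and only then transports this through $\Phi^i$ to $\zeta$. Your route is more direct and slightly more economical, since it needs neither $\mathcal{R}(\zeta)=p_{\max}$ nor the range-projection manipulation, and it makes transparent that the commutation is really a property of $p_{\max}$ forced by the bimodule structure alone; the paper's version has the mild advantage of not invoking the lattice fact that $\mathrm{Ad}(u)$ preserves suprema of projections, working instead with single positive elements of $\mathcal{N}$. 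Both arguments are sound.
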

\begin{proof}
Since $\|\Phi\|_{\infty\to\infty}=\|\Phi(\mathbf{1})\|\leq 1$, we have $\|\Phi^{i}(p_{max})\|\leq1$.
Hence the sequence in equality \eqref{eq:max PF vector} is uniformly bounded and the limit exists in the weak-operator topology.
It is clear that $\zeta$ is positive.
Moreover,
\begin{align*}
    \Phi(\zeta)=\lim_{N\to\omega}\frac{1}{N}\sum_{i=0}^{N-1}\Phi^{i+1}(p_{max})=\zeta.
\end{align*}
Hence $\zeta\in\mathcal{E}(\Phi)^+$ and $\mathcal{R}(\zeta)\leq p_{\max}$.
For any $x\in\mathcal{E}(\Phi)^+$ with  $\mathcal{R}(x)=p$ and $\|x\|\leq1$, we have
\[\Phi^i(p_{max})\geq \Phi^i(p)\geq \Phi^i(x)=x. \]
Hence $\zeta\geq x$ and $\mathcal{R}(\zeta)\geq p$.
It follows that $\mathcal{R}(\zeta)\geq p_{max}$.
From the above, $\mathcal{R}(\zeta)= p_{max}$.

We next show that $\zeta$ commutes with all operators in $\mathcal{N}$.
For any positive operator $y\in\mathcal{N}$, 
\[\Phi(y\zeta y)=y\Phi(\zeta)y=y\zeta y.\]
Hence $\mathcal{R}(y\zeta y)\leq p_{max}$.
Thus $\mathcal{R}(y\zeta^{1/2})\leq p_{max}$.
We have $p_{max}y\zeta^{1/2}=y\zeta^{1/2}$.
It implies 
\[p_{max}yp_{max}=yp_{max}.\]
Therefore, 
\[p_{max}y=yp_{max}.\]
Then 
\[y\Phi^i(p_{max})=\Phi^i(yp_{max})=\Phi^i(p_{max}y)=\Phi^i(p_{max})y.\]
By the definition of $\zeta$, we obtain $y\zeta=\zeta y$.
Therefore, $\zeta\in\mathcal{N}'\cap\mathcal{M}$.
\iffalse
Let $x\in\mathcal{E}(\Phi)^+$ with $\|x\|\leq1$ and $\mathcal{R}(x)=p$. 
By Kadison-Schwarz inequality,
\begin{align*}
\Phi^i(x^{1/2^n})^{2^n}\leq  \|\Phi^i\|_{\infty\to\infty}^{2^n-1}\Phi^i(x)=\|\Phi^i\|_{\infty\to\infty}^{2^n-1}x1.
\end{align*}
By the operator monotonicity of $f(t)=t^{1/2}$,
\begin{align*}
\Phi^i(x^{1/2^n})\leq  \|\Phi^i\|_{\infty\to\infty}^{1-1/2^{n}}x^{1/2^{n}}.
\end{align*}
Let $n\to \infty$.
By the normality of $\Phi$, we obtain that $\Phi^i(p)\leq  \|\Phi^i\|_{\infty\to\infty}p$.
On the other hand, $x=\Phi^i(x)\leq \Phi^i(p)$.
Hence $\mathcal{R}(\Phi^i(p))=p$.
We have
\begin{align*}
\mathcal{R}(\Phi^i(p_{max}))=\mathcal{R}(\Phi^i (\bigvee_{x\in\mathcal{E}(\Phi)^+}\mathcal{R}(x)))=\bigvee_{x\in\mathcal{E}(\Phi)^+}\mathcal{R}(\Phi^i (\mathcal{R}(x)))=\bigvee_{x\in\mathcal{E}(\Phi)^+}\mathcal{R}(x)=p_{max}.
\end{align*}
It follows that $\mathcal{R}(\zeta)\leq p_{max}$. 
On the other hand, $\zeta\geq x$ implies $\mathcal{R}(\zeta)\geq p$.
Hence $\mathcal{R}(\zeta)\geq p_{max}$.
\fi

\iffalse
(ii)
We have $\|\Phi\|_{1\to1}=\|\Phi^*\|_{\infty\to\infty}=\|\Phi^*(1)\|_{\infty}\leq 1$.
Thus, $\|\Phi^{i}(p_{max})\|_{1}\leq1$.
It implies that the limits in equality \eqref{eq:max PF vector}  exists in $L^1$-norm and $\zeta\in L^1(\mathcal{M})^+$. 
\fi
\end{proof}
Suppose $\mathcal{N}\subseteq\mathcal{M}$ is a subfactor and $\Phi\in \CP$.
To characterize the Perron-Frobenius space of $\Phi$, we introduce the following set:
    \begin{align}
        \mathcal{P}(\Phi):=\{x\in\mathcal{M}:\ xy_\Phi=y_\Phi x,\ x^*y_\Phi=y_\Phi x^*\}.
    \end{align}
   We see that $\mathcal{P}(\Phi)$ is an intermediate von Neumann subalgebra of the subfactor $\mathcal{N}\subseteq\mathcal{M}$.
In particular, when $\mathcal{N}\subseteq\mathcal{M}$ is irreducible, i.e., $\mathcal{N}'\cap\mathcal{M}=\mathbb{C}\mathbf{1}$, 
we have
$\mathcal{P}(\Phi)'\cap \mathcal{P}(\Phi)\subseteq\mathcal{N}'\cap\mathcal{M}=\mathbb{C}\mathbf{1}$.
Hence $\mathcal{P}(\Phi)$ is a factor.

%We assume $\tau_{\mathcal{N}'}|_{\mathcal{N}' \cap \mathcal{M}}$ coincide with $\tau_{\mathcal{M}}|_{\mathcal{N}' \cap \mathcal{M}}$ in the rest of the paper.
The following proposition describes the connection between $\mathcal{E}(\Phi)$ and $\mathcal{P}(\Phi)$ subject to $\zeta$ described in Lemma \ref{lem:max support}.
\begin{proposition}\label{prop:PF space and max PF vector}
Suppose $\mathcal{N}\subseteq\mathcal{M}$ is a subfactor of finite index and $\Phi\in\mathbf{CP}_{\mathcal{N}}(\mathcal{M})$ with $r(\Phi)=1$.
Suppose $\Phi$ is contractive. 
Then for any $x\in \mathcal{P}(\Phi)$, we have $[\zeta,x]=0$ and
$$\Phi(\zeta x)=\zeta x.$$
Moreover,  for any factor $\mathcal{L}\subseteq \mathcal{P}(\Phi)$, if $\zeta \mathcal{L}\neq0$ then $\zeta x=0$ implies $x=0$ for any $x\in\mathcal{L}$.
    
\end{proposition}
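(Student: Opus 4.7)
The plan is to first translate membership in $\mathcal{P}(\Phi)$ into a functional property of $\Phi$ itself, and then to show that $p_{max}$ commutes with $\mathcal{P}(\Phi)$; both $[\zeta,x]=0$ and $\Phi(\zeta x)=\zeta x$ will then follow by passing through the ergodic average defining $\zeta$. For $x\in\mathcal{P}(\Phi)$ and any $z\in\mathcal{M}$, equation \eqref{eq:bimodule map correponding to operator} gives $\Phi(xz)\Omega = y_\Phi xz\Omega = xy_\Phi z\Omega = x\Phi(z)\Omega$, and since $\Omega$ is separating, $\Phi(xz)=x\Phi(z)$. Since $x^*\in\mathcal{P}(\Phi)$ and $\Phi$ preserves $*$, the dual identity $\Phi(zx)=\Phi(z)x$ follows by taking adjoints. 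In particular, for any $z\in\mathcal{E}(\Phi)^+$, $\Phi(xzx^*)=x\Phi(z)x^*=xzx^*$, so $xzx^*\in\mathcal{E}(\Phi)^+$ and hence $\mathcal{R}(xzx^*)\leq p_{max}$.

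Taking $z=\zeta$, this reads $\mathcal{R}(x\zeta x^*)\leq p_{max}$, equivalently $(\mathbf{1}-p_{max})x\zeta^{1/2}=0$. Since the range projection of $\zeta^{1/2}$ is $\mathcal{R}(\zeta)=p_{max}$ by Lemma \ref{lem:max support}, the range of $\zeta^{1/2}$ is dense in $p_{max}L^2(\mathcal{M})$, so $(\mathbf{1}-p_{max})xp_{max}=0$, i.e.\ $xp_{max}=p_{max}xp_{max}$. Repeating the argument with $x^*$ in place of $x$ and taking the adjoint yields $p_{max}x=p_{max}xp_{max}$, so $[x,p_{max}]=0$ for every $x\in\mathcal{P}(\Phi)$. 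Because $x\in\mathcal{P}(\Phi)$ implies $x\in\mathcal{P}(\Phi^i)$ for every $i\geq 0$, the identities $\Phi^i(xp_{max})=x\Phi^i(p_{max})$ and $\Phi^i(p_{max}x)=\Phi^i(p_{max})x$ hold; combined with $xp_{max}=p_{max}x$ they give $x\Phi^i(p_{max})=\Phi^i(p_{max})x$ for each $i$. Averaging and passing to the ultrafilter limit yields $x\zeta=\zeta x$, and then $\Phi(\zeta x)=\Phi(\zeta)x=\zeta x$.

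For the moreover statement, suppose $\mathcal{L}\subseteq\mathcal{P}(\Phi)$ is a factor with $\zeta\mathcal{L}\neq 0$ and let $x\in\mathcal{L}$ satisfy $\zeta x=0$. For any $a,b\in\mathcal{L}$, using $[\zeta,a]=0$ from the first part, $\zeta(axb)=a(\zeta x)b=0$, so $\zeta$ annihilates the two-sided ideal of $\mathcal{L}$ generated by $x$; by weak-operator continuity of left multiplication by $\zeta$ it also annihilates the WOT-closure of this ideal. Since $\mathcal{L}$ is a factor, this WOT-closed two-sided ideal is either $\{0\}$ or $\mathcal{L}$, and the latter contradicts $\zeta\mathcal{L}\neq 0$, forcing $x=0$.

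The main technical obstacle is the step from the range-projection inequality $\mathcal{R}(xzx^*)\leq p_{max}$ to the commutation $[x,p_{max}]=0$: it requires carefully combining the density of the range of $\zeta^{1/2}$ in $p_{max}L^2(\mathcal{M})$ with the $*$-invariance of $\mathcal{P}(\Phi)$ to extract both one-sided annihilation identities from the single eigenvector $\zeta$ produced by Lemma \ref{lem:max support}.
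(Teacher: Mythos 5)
Your proof is correct and follows essentially the same route as the paper's: you show that elements of $\mathcal{P}(\Phi)$ act as module maps under $\Phi$ (via $y_\Phi$ and the separating vector), use positive eigenvectors of the form $x\zeta x^*$ to prove $p_{max}$ commutes with $\mathcal{P}(\Phi)$, push the commutation through the ergodic average to get $[\zeta,x]=0$ and $\Phi(\zeta x)=\zeta x$, and conclude the last claim by factoriality of $\mathcal{L}$. The only cosmetic deviations are that you take $z=\zeta$ directly (the paper ranges over all $x_p\in\mathcal{E}(\Phi)^+$ and takes a union of range projections, which your use of $\mathcal{R}(\zeta)=p_{max}$ makes unnecessary) and that you invoke the absence of proper nonzero WOT-closed two-sided ideals in a factor in place of the paper's equivalent central-support computation with unitaries.
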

\begin{proof}
 For any $x\in\mathcal{P}(\Phi)$, we have that
\begin{align*}
\Phi( x \zeta)
=&\mu E_{\mathcal{M}}(y_\Phi x \zeta  e_1 )\\
=&\mu x E_{\mathcal{M}}( y_\Phi \zeta e_1)\\
=&x\Phi(\zeta)= x \zeta,
\end{align*}
which implies $x \zeta \in\mathcal{E}(\Phi)$.

Let $x_p\in\mathcal{E}(\Phi)^+$ with the range projection $\mathcal{R}(x_p)=p$.
Then for any $x\in\mathcal{P}(\Phi)$,
\begin{align*}
\Phi(x^*x_px)=x^*x_px.
\end{align*}
Thus $\mathcal{R}(x^*x_p^{1/2})=\mathcal{R}(x^*x_px)\leq p_{max}$.
We obtain that $p_{max} x^* p=x^*p.$
By taking the union of $p$, we see that
$$p_{max} x^* p_{max}=x^*p_{max}.$$
Hence $p_{max}x=xp_{max}$ for any $x\in\mathcal{P}(\Phi)$.
Now for any $i\in\mathbb{N}$, we have 
% \begin{align*}
% x\Phi^i(p_{max})&=\lambda^{-1} x E_{\mathcal{M}}(y_\Phi \Phi^{i-1}(p_{max}) e_1 )x\\
% &=\lambda^{-1} x E_{\mathcal{M}}(\Phi^{i-1}(p_{max}) y_\Phi  e_1 )\\
% &=\lambda^{-1} E_{\mathcal{M}}(x y_\Phi \Phi^{i-1}(p_{max}) e_1 )\\
% &=\Phi(x\Phi^{i-1}(p_{max})  )\\
% &\cdots\\
% &=\Phi^i(x p_{max} ).
% \end{align*}
% Since $\Phi(x)=\lambda^{-1} E_{\mathcal{M}}(e_1 x y_{\Phi}^*)$, then
% \begin{align*}
% \Phi^i(p_{max}) x &=\lambda^{-1} E_{\mathcal{M}}(e_1\Phi^{i-1}(p_{max})y_{\Phi}^*)x\\
% &=\lambda^{-1} E_{\mathcal{M}}(e_1 \Phi^{i-1}(p_{max})y_{\Phi}^* x)\\
% &=\lambda^{-1} E_{\mathcal{M}}(e_1 \Phi^{i-1}(p_{max})y_{\Phi}^*x)\\
% &=\Phi(\Phi^{i-1}(p_{max})x)\\
% &\cdots\\
% &=\Phi^i(p_{max} x).    
% \end{align*}
\begin{align*}
\Phi^i(p_{max})x=\Phi^i(p_{max}x)=\Phi^i(x p_{max})=x\Phi^i(p_{max}).
\end{align*}
Thus $[\zeta,x]=0$ from Equation \eqref{eq:max PF vector}.

Suppose that $\zeta x=0$ for some $x\in\mathcal{L}$. 
Then
\begin{align}
  \mathcal{R}\left(  \zeta \bigvee_{u\in \mathcal{U}(\mathcal{L})}u^*\mathcal{R}(x)u\right)= \bigvee_{u\in \mathcal{U}(\mathcal{L})}u^*\mathcal{R}(\zeta x)u=0.
\end{align}
Note that $\displaystyle \bigvee_{u\in \mathcal{U}(\mathcal{L})}u^*\mathcal{R}(x)u=\mathbf{1}_{\mathcal{L}}$ if $\mathcal{R}(x)\neq 0$ as $\mathcal{N}$ is a factor. 
Since $\zeta \mathcal{L}\neq0$, we obtain that $\mathcal{R}(x)=0$.
\end{proof}
%\begin{remark}
%It is not difficult to check that $\mathcal{E}_x$ is infinite dimensional.
%\end{remark}

\subsection{Phase groups for bimodule quantum channels}
Next, we will characterize the eigenspaces for bimodule quantum channels on subfactors $\mathcal{N}\subseteq\mathcal{M}$.
Let $\Phi\in\mathbf{CP}_{\mathcal{N}}(\mathcal{M})$.
%We assume that $r(\Phi)=1$ in the rest of the paper.
For any $\alpha\in\sigma(\Phi)$, we define
\begin{align*}
\mathcal{M}(\Phi,\alpha)=\{x\in\mathcal{M}:\ \Phi(x)=\alpha x\},
\end{align*}
which is the eigenspace of $\Phi$ corresponding to the eigenvalue $\alpha$.
By Lemma \ref{lem:spectrum equal}, $\alpha\in\sigma(y_{\Phi})$.
Then there exists a nonzero projection $p\in\mathcal{N}'\cap\mathcal{M}_1$ such that $y_{\Phi}p=\alpha p$.
By Remarks \ref{rem:bimodule map} and \ref{rem:operation bimodule map}, $\Phi \Theta_p=\Theta_{y_{\Phi}}\Theta_p=\Theta_{y_{\Phi}p}=\alpha\Theta_p$.
Since $\Theta_p$ is a nonzero bimodule map on $\mathcal{M}$, there exists $y\in\mathcal{M}$ such that $x:=\Theta_p(y)\neq0$.
Hence $\Phi(x)=\alpha x$ and $\mathcal{M}(\Phi,\alpha)$ is a non-empty set.
Let $r$  be the spectrum radius of $\Phi$.
We shall note that the two notations $\mathcal{M}(\Phi,r)$ and $\mathcal{E}(\Phi)$ are consistent.

On the other hand, we define
\begin{align*}
\mathcal{P}(\Phi,\alpha)=\{x\in\mathcal{M}:\ xy_\Phi^*=\alpha y_\Phi^* x, y_\Phi x=\alpha xy_\Phi\}.
\end{align*}
By Corollary \ref{cor:PF theorem for bimodule map}, we have $r\in \sigma(\Phi)$.
When  $r=1$, we have $\mathcal{P}(\Phi,1)=\mathcal{P}(\Phi).$
%We have the Kadison's inequality:
%\begin{align}
 %   \Phi(x)^*\Phi(x)\leq\|\Phi(\mathbf{1})\| \Phi(x^*x),\quad x\in\mathcal{M}.
%\end{align}
% \begin{proof}
%     Suppose $\Phi(x)\geq x$.
%    Then $\omega(\Phi(x)-x)=0$ and $\Phi(x)-x\geq 0$.
%    Hence $\Phi(x)=x$ by the faithfulness of $\omega$.
% \end{proof}

\begin{lemma}\label{lem:commute}
    Suppose $\mathcal{N}\subseteq\mathcal{M}$ is a subfactor of finite index and $\Phi\in\mathbf{CP}_{\mathcal{N}}(\mathcal{M})$ is contactive with $r(\Phi)=1$.
We assume there exists a faithful normal  state $\omega$ on $\mathcal{M}$ such that $\omega\Phi=\omega$.
Then for any $x\in\mathcal{M}(\Phi,\alpha)$, $|\alpha|=1$, we have
\begin{align*}
x\widehat{\Phi}^{1/2}e_1e_2=\alpha \widehat{\Phi}^{1/2}e_1e_2 x,\quad e_2e_1\widehat{\Phi}^{1/2}x=\alpha xe_2e_1\widehat{\Phi}^{1/2}. 
\end{align*}
\end{lemma}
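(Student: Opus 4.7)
My plan is to pass from the eigenvalue equation $\Phi(x)=\alpha x$ with $|\alpha|=1$ to the multiplicative-domain identity $\Phi(x^*x)=x^*x$, and then rewrite this identity through the Stinespring-type formula of Proposition~\ref{prop:formula} to extract the commutation with $\widehat{\Phi}^{1/2}e_1e_2$. First I would note that $\Phi$ is necessarily unital: since $\Phi$ is CP and contractive, $0\leq\Phi(\mathbf{1})\leq\mathbf{1}$, and $\omega(\mathbf{1}-\Phi(\mathbf{1}))=\omega(\mathbf{1})-\omega\Phi(\mathbf{1})=0$ combined with faithfulness of $\omega$ forces $\Phi(\mathbf{1})=\mathbf{1}$. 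The Kadison--Schwarz inequality then gives $\Phi(x^*x)\geq\Phi(x)^*\Phi(x)=|\alpha|^2 x^*x=x^*x$, and applying $\omega$ and using faithfulness once more collapses this to $\Phi(x^*x)=x^*x$, and analogously $\Phi(xx^*)=xx^*$.

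Next I would set $T:=\mu^{3/4}\widehat{\Phi}^{1/2}e_1e_2\in\mathcal{M}_2$. Because $\widehat{\Phi}\in\mathcal{M}'\cap\mathcal{M}_2$, its square root commutes with every $y\in\mathcal{M}$, so Proposition~\ref{prop:formula} rearranges as
\[ E_{\mathcal{M}}(T^*yT)=\mu^{3/2}E_{\mathcal{M}}(e_2e_1\widehat{\Phi}^{1/2}y\widehat{\Phi}^{1/2}e_1e_2)=\mu^{3/2}E_{\mathcal{M}}(e_2e_1\widehat{\Phi}ye_1e_2)=\Phi(y). \]
In particular $E_{\mathcal{M}}(T^*T)=\mathbf{1}$, $E_{\mathcal{M}}(T^*xT)=\alpha x$, $E_{\mathcal{M}}(T^*x^*T)=\bar\alpha x^*$, and $E_{\mathcal{M}}(T^*x^*xT)=x^*x$.

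The main step is to expand
\[ (xT-\alpha Tx)^*(xT-\alpha Tx)=T^*x^*xT-\alpha T^*x^*Tx-\bar\alpha x^*T^*xT+x^*T^*Tx \]
and apply $E_{\mathcal{M}}$ termwise, pulling the factors $x,x^*\in\mathcal{M}$ through the $\mathcal{M}$-bimodularity of $E_{\mathcal{M}}$. Using the four identities of the previous paragraph, the terms become $x^*x,\,-\alpha\bar\alpha\,x^*x,\,-\bar\alpha\alpha\,x^*x,\,x^*x$, which cancel to $0$ thanks to $|\alpha|=1$. Since $E_{\mathcal{M}}\colon\mathcal{M}_2\to\mathcal{M}$ is faithful and the argument is a positive element of $\mathcal{M}_2$, I conclude $xT=\alpha Tx$, which, after dividing out $\mu^{3/4}$, is the first asserted equality; the second follows by applying the first to $x^*\in\mathcal{M}(\Phi,\bar\alpha)$ and taking adjoints. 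The delicate point is the first step, where genuine Kadison--Schwarz equality is needed---this is precisely where the hypothesis of a faithful $\Phi$-invariant normal state is indispensable, since without unitality and the resulting multiplicative-domain collapse, the termwise cancellation would break down.
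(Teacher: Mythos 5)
Your proof is correct and follows essentially the same route as the paper: both derive $\Phi(x^*x)=x^*x$ from Kadison--Schwarz plus faithfulness of the invariant state, expand the defect $x\widehat{\Phi}^{1/2}e_1e_2-\alpha\widehat{\Phi}^{1/2}e_1e_2x$ into four terms using the Stinespring-type formula of Proposition~\ref{prop:formula}, and conclude by faithfulness (the paper works with $\tau\circ E_{\mathcal{M}}$ and keeps only $\Phi(\mathbf{1})\leq\mathbf{1}$, whereas you first extract unitality and apply $E_{\mathcal{M}}$ as an operator identity -- a cosmetic difference). Your derivation of the second identity by applying the first to $x^*\in\mathcal{M}(\Phi,\overline{\alpha})$ and taking adjoints is a clean way to make precise the paper's ``similarly''.
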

\begin{proof}
By Kadison-Schwarz inequality, $\Phi(x^*x)\geq\Phi(x)^*\Phi(x)=x^*x$.
Since $\omega(\Phi(x^*x)-x^*x)=0$, we have $\Phi(x^*x)=x^*x$ as $\omega$ is faithful.
Let $T=x\widehat{\Phi}^{1/2}e_1e_2-\alpha \widehat{\Phi}^{1/2}e_1e_2 x$.
Then
\begin{align*}
\tau(E_{\mathcal{M}}(T^*T))=&\tau\left( E_{\mathcal{M}}[(x\widehat{\Phi}^{1/2}e_1e_2-\alpha \widehat{\Phi}^{1/2}e_1e_2 x)^*(x\widehat{\Phi}^{1/2}e_1e_2-\alpha \widehat{\Phi}^{1/2}e_1e_2 x)]\right)\\
=&\mu^{-3/2}\tau(\Phi(x^*x))+\mu^{-3/2}\tau(xx^*\Phi(\mathbf{1}))-\overline{\alpha}\mu^{-3/2}\tau(x^*\Phi(x))-\alpha\mu^{-3/2}\tau(x\Phi(x^*))\\
\leq&\mu^{-3/2}\tau(x^*x)+\mu^{-3/2}\tau(xx^*)-\mu^{-3/2}\tau(x^*x)-\mu^{-3/2}\tau(xx^*)\\
=&0.
\end{align*}
Hence $E_{\mathcal{M}}(T^*T)=0$.
By the Pimsner-Popa inequality, $T=0$.
Similarly, we can prove the second equality.
\iffalse
\begin{align*}
&\tau_{\mathcal{M}_2}[(y\widehat{x}^{1/2}e_1e_2-\lambda \widehat{x}^{1/2}e_1e_2 y)^*(y\widehat{x}^{1/2}e_1e_2-\lambda \widehat{x}^{1/2}e_1e_2 y)]\\
=&\tau_{\mathcal{M}_2}(e_2e_1y^*y\widehat{x}e_1e_2)
+\tau_{\mathcal{M}_2}(yy^*e_2e_1\widehat{x}e_1e_2)
-\overline{\lambda}\tau_{\mathcal{M}_2}(y^*e_2e_1y\widehat{x}e_1e_2)
-\lambda\tau_{\mathcal{M}_2}(ye_2e_1y^*\widehat{x}e_1e_2)\\
=&\tau_{\mathcal{M}}[E_{\mathcal{M}}(y^*y\widehat{x}e_1e_2e_1)]+\tau_{\mathcal{M}}[ E_{\mathcal{M}}(yy^*e_2e_1\widehat{x}e_1e_2)]
-\overline{\lambda}\tau_{\mathcal{M}}[ E_{\mathcal{M}}(y^*e_2e_1y\widehat{x}e_1e_2)]
-\lambda\tau_{\mathcal{M}}[ E_{\mathcal{M}}(ye_2e_1y^*\widehat{x}e_1e_2)]\\
=&\tau_{\mathcal{M}}(y^*yE_{\mathcal{M}}(\widehat{x}e_1e_2e_1))+\tau_{\mathcal{M}}(yy^*E_{\mathcal{M}}(e_2e_1\widehat{x}e_1e_2))-\overline{\lambda}\tau_{\mathcal{M}}(y^*E_{\mathcal{M}}(e_2e_1y\widehat{x}e_1e_2))-\lambda\tau_{\mathcal{M}}(yE_{\mathcal{M}}(e_2e_1y^*\widehat{x}e_1e_2))\\
=&\mu^{-5/2}\tau_{\mathcal{M}}(y^*y)+\mu^{-5/2}\tau_{\mathcal{M}}(yy^*\Phi_x(1))-\overline{\lambda}\mu^{-5/2}\tau_{\mathcal{M}}(y^*\Phi_x(y))-\lambda\mu^{-5/2}\tau_{\mathcal{M}}(y\Phi_x(y^*))\\
=&0.
\end{align*}
    \fi
\end{proof}

Now we are able to describe the phase groups of bimodule maps and the associated eigenspaces.

\begin{theorem}\label{thm:entry}
Suppose $\mathcal{N}\subseteq\mathcal{M}$ is a subfactor of finite index and $\Phi$ is a bimodule quantum channel.
We assume that there exists a faithful normal  state $\omega$ on $\mathcal{M}$ such that $\omega\Phi=\omega$.
Then the following statements hold.
\begin{enumerate}[(i)]
\item  $\mathcal{M}(\Phi,\alpha)=\mathcal{P}(\Phi,\alpha)$ for any $\alpha\in\sigma(\Phi)\cap U(1)$, where $U(1)$ is the unit circle.
\item $\mathcal{M}(\Phi,\alpha)^*=\mathcal{M}(\Phi,\overline{\alpha})$ for any $\alpha\in\sigma(\Phi)\cap U(1)$.
\item $\mathcal{M}(\Phi,\alpha_1)\mathcal{M}(\Phi,\alpha_2)\subseteq \mathcal{M}(\Phi,\alpha_1\alpha_2)$ for any $\alpha_1,\alpha_2\in\sigma(\Phi)\cap U(1)$.
In particular, $\mathcal{M}(\Phi,\alpha)\mathcal{M}(\Phi,\overline{\alpha})$ is a $*$- subalgebra of $\mathcal{M}(\Phi,1)$.

 We further assume $\mathcal{M}(\Phi,1)$ is a factor.
\item The set $\Gamma:=\sigma(\Phi)\cap U(1)$ is a finite cyclic group.
\item
Then there exists a unitary $u_{\alpha}\in\mathcal{M}(\Phi,\alpha)$, $\alpha\in\Gamma$, such that 
\begin{align*}
\mathcal{M}(\Phi,\alpha)=u_{\alpha}\mathcal{M}(\Phi,1)=\mathcal{M}(\Phi,1)u_{\alpha}.
\end{align*}
Moreover, $\mathcal{M}(\Phi,\alpha)^{n}=\mathcal{M}(\Phi,\alpha^n)$, $n\in\mathbb{N}$.
In particular, $\mathcal{M}(\Phi,\alpha)^{|\Gamma|}=\mathcal{M}(\Phi,1)$, where $|\Gamma|$ is the order of the finite cyclic group $\Gamma$. 
This implies that $\mathcal{M}(\Phi, \alpha)$ is invertible $\mathcal{M}(\Phi,1)$-$\mathcal{M}(\Phi,1)$-bimodule, and the eigenspaces form a bimodule category which is a unitary fusion category.
%    \item $\Gamma=\{\lambda\in\sigma(x):\ |\lambda|=1\}$ is a finite cyclic group under the multiplication of complex numbders.
\end{enumerate}
\end{theorem}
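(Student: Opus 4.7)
The plan is to drive everything from Choi's multiplicative domain theorem. The invariance of the faithful normal state $\omega$ forces Kadison-Schwarz to saturate on every modulus-$1$ eigenvector: for $x\in\mathcal{M}(\Phi,\alpha)$ with $|\alpha|=1$, one has $\Phi(x^*x)\geq\Phi(x)^*\Phi(x)=x^*x$ while $\omega(\Phi(x^*x)-x^*x)=0$, and faithfulness of $\omega$ promotes this to $\Phi(x^*x)=x^*x$; similarly $\Phi(xx^*)=xx^*$. Thus every such $x$ lies in the multiplicative domain of $\Phi$, on which $\Phi$ is a $*$-homomorphism satisfying $\Phi(xy)=\Phi(x)\Phi(y)$ for every $y\in\mathcal{M}$. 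Items (ii) and (iii) fall out at once: $\Phi(x^*)=\Phi(x)^*=\overline{\alpha}x^*$, and $\Phi(xy)=\alpha_1\alpha_2\, xy$ whenever $x\in\mathcal{M}(\Phi,\alpha_1)$ and $y\in\mathcal{M}(\Phi,\alpha_2)$.

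For (i), I translate the multiplicativity into the operator language through the identification $y_\Phi w\Omega=\Phi(w)\Omega$. For $x\in\mathcal{M}(\Phi,\alpha)$ and any $w\in\mathcal{M}$, the identity $\Phi(xw)=\alpha x\Phi(w)$ gives $y_\Phi xw\Omega=\alpha xy_\Phi w\Omega$ on the dense subspace $\mathcal{M}\Omega$, so $y_\Phi x=\alpha xy_\Phi$ as bounded operators on $L^2(\mathcal{M})$; replacing $x$ by $x^*\in\mathcal{M}(\Phi,\overline{\alpha})$ and taking adjoints yields $xy_\Phi^*=\alpha y_\Phi^*x$, establishing $\mathcal{M}(\Phi,\alpha)\subseteq\mathcal{P}(\Phi,\alpha)$. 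The reverse inclusion is immediate: for $x\in\mathcal{P}(\Phi,\alpha)$, $\Phi(x)\Omega=y_\Phi x\Omega=\alpha xy_\Phi\Omega=\alpha x\Omega$ using $\Phi(\mathbf{1})=\mathbf{1}$, and separation of $\Omega$ gives $\Phi(x)=\alpha x$.

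The decisive step is (v). Given a nonzero $x\in\mathcal{M}(\Phi,\alpha)$ with polar decomposition $x=v|x|$, Borel functional calculus on $x^*x\in\mathcal{M}(\Phi,1)$ shows $|x|\in\mathcal{M}(\Phi,1)$. Approximating $v$ strongly by $xf_n(x^*x)$ with $f_n$ bounded Borel tending to $t\mapsto t^{-1/2}\chi_{t>0}$, the multiplicative-domain identity $\Phi(xf_n(x^*x))=\alpha xf_n(x^*x)$ together with normality of $\Phi$ gives $v\in\mathcal{M}(\Phi,\alpha)$. By Zorn, I extract a maximal family $\{v_\beta\}\subset\mathcal{M}(\Phi,\alpha)$ of partial isometries with pairwise orthogonal source projections and pairwise orthogonal range projections, and set $u_\alpha=\sum_\beta v_\beta\in\mathcal{M}(\Phi,\alpha)$. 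The key claim is that $u_\alpha$ is unitary: if $p=\mathbf{1}-u_\alpha^*u_\alpha$ were nonzero, then $q=\mathbf{1}-u_\alpha u_\alpha^*$ would also be nonzero with $\tau(p)=\tau(q)$; ultraweak density of the two-sided ideal $\mathcal{M}(\Phi,\alpha)^*\mathcal{M}(\Phi,\alpha)$ in the factor $\mathcal{M}(\Phi,1)$ combined with comparability of projections in this factor would produce $x'\in\mathcal{M}(\Phi,\alpha)$ with $qx'p\neq 0$, and its polar decomposition (again an element of $\mathcal{M}(\Phi,\alpha)$ by the same Choi/normality argument) would provide a partial isometry with source under $p$ and range under $q$, contradicting maximality. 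The identification $\mathcal{M}(\Phi,\alpha)=u_\alpha\mathcal{M}(\Phi,1)=\mathcal{M}(\Phi,1)u_\alpha$ follows immediately: for any $y\in\mathcal{M}(\Phi,\alpha)$, $u_\alpha^*y\in\mathcal{M}(\Phi,\overline{\alpha})\mathcal{M}(\Phi,\alpha)\subseteq\mathcal{M}(\Phi,1)$ by (iii), so $y=u_\alpha(u_\alpha^*y)\in u_\alpha\mathcal{M}(\Phi,1)$.

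Finally (iv) is immediate from (v): $\sigma(\Phi)=\sigma(y_\Phi)$ is finite since $\mathcal{N}'\cap\mathcal{M}_1$ is finite-dimensional, and $\Gamma$ is closed under conjugation by (ii) and under multiplication since $u_{\alpha_1}u_{\alpha_2}\in\mathcal{M}(\Phi,\alpha_1\alpha_2)$ is a unitary; any finite subgroup of $U(1)$ is cyclic. The iteration $\mathcal{M}(\Phi,\alpha)^n=u_\alpha^n\mathcal{M}(\Phi,1)=\mathcal{M}(\Phi,\alpha^n)$ and the unitary fusion-category statement follow formally from the invertibility of each $\mathcal{M}(\Phi,\alpha)$. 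The main obstacle is the unitary-extension step in (v): producing a single partial isometry from a nonzero eigenvector is routine, but packaging the maximal family into a genuine unitary requires a careful interplay between factor comparability in $\mathcal{M}(\Phi,1)$ and fullness of the ideal $\mathcal{M}(\Phi,\alpha)^*\mathcal{M}(\Phi,\alpha)$ to locate a partial isometry with source and range matching the complementary projections.
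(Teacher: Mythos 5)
Your proposal is correct, but at both technical pinch points it takes a genuinely different route from the paper. For (i)--(iii) the paper does not use Choi's multiplicative domain theorem; it proves Lemma \ref{lem:commute} by a Fourier-multiplier computation: with $T=x\widehat{\Phi}^{1/2}e_1e_2-\alpha\widehat{\Phi}^{1/2}e_1e_2x$ one checks $\tau(E_{\mathcal{M}}(T^*T))=0$ and invokes the Pimsner--Popa inequality to get $T=0$, then applies $E_{\mathcal{M}_1}$ to obtain $y_\Phi x=\alpha xy_\Phi$ and $xy_\Phi^*=\alpha y_\Phi^*x$. Your route (saturation of Kadison--Schwarz by the invariant faithful state, membership of $x$ in the multiplicative domain, hence $\Phi(xw)=\alpha x\Phi(w)$, hence the commutation relations on the dense subspace $\mathcal{M}\Omega$) is more elementary, is essentially the mechanism the paper's own remark attributes to St\o rmer for the $\alpha=1$ case, and isolates where finite index really enters, namely the finiteness of $\sigma(\Phi)=\sigma(y_\Phi)$; the paper's lemma, on the other hand, is the form that gets recycled in the planar-algebra section. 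For (v) the paper turns a single partial isometry $v\in\mathcal{M}(\Phi,\alpha)$ into a unitary by the standard tiling trick in the finite factor $\mathcal{M}(\Phi,1)$ (finitely many $v_j,w_j\in\mathcal{M}(\Phi,1)$ with $\sum_j v_jvw_j$ unitary), whereas you use Zorn maximality inside $\mathcal{M}(\Phi,\alpha)$ itself; the step you flag as the main obstacle does close, and in fact more simply than you suggest: if $p=\mathbf{1}-u_\alpha^*u_\alpha\neq0$ then $q=\mathbf{1}-u_\alpha u_\alpha^*\neq0$ because $\tau(p)=\tau(q)$ in the finite factor $\mathcal{M}$; the span of $\mathcal{M}(\Phi,\alpha)\mathcal{M}(\Phi,\alpha)^*$ is a nonzero two-sided ideal of the factor $\mathcal{M}(\Phi,1)$, hence ultraweakly dense, so $q\mathcal{M}(\Phi,\alpha)\neq0$; then the span of $\mathcal{M}(\Phi,\alpha)^*q\mathcal{M}(\Phi,\alpha)$ is again a nonzero two-sided ideal, hence ultraweakly dense in $\mathcal{M}(\Phi,1)$, and $q\mathcal{M}(\Phi,\alpha)p=0$ would force $\mathcal{M}(\Phi,\alpha)^*q\mathcal{M}(\Phi,\alpha)p=0$, hence $\mathcal{M}(\Phi,1)p=0$ and $p=0$, a contradiction --- so no comparison theory beyond the trace equality is needed, and the polar part of $qx'p$ lies in $\mathcal{M}(\Phi,\alpha)$ by your earlier argument, contradicting maximality. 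With that step written out, your (iv), the iteration $\mathcal{M}(\Phi,\alpha)^n=u_\alpha^n\mathcal{M}(\Phi,1)=\mathcal{M}(\Phi,\alpha^n)$, and the fusion-category conclusion agree with the paper's.
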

\begin{proof}
(i) By Lemma \ref{lem:commute}, we have $x\widehat{\Phi}e_1e_2=\alpha \widehat{\Phi}e_1e_2 x$ and $e_2e_1\widehat{\Phi}^{1/2}x=\alpha xe_2e_1\widehat{\Phi}^{1/2}$ for any $x\in\mathcal{M}(\Phi,\alpha)$. 
Taking the conditional expectation $E_{\mathcal{M}_1}$ on both sides of these two equalities, we obtain that $xy_\Phi^*=\alpha y_\Phi^* x$ and $y_\Phi x=\alpha xy_\Phi$.
Thus $\mathcal{M}(\Phi,\alpha)\subseteq\mathcal{P}(\Phi,\alpha)$.
On the other hand, for any $x\in\mathcal{P}(\Phi,\alpha)$,
\begin{align*}
\Phi(x)&=\mu E_{\mathcal{M}}(y_\Phi xe_1)=\alpha \mu E_{\mathcal{M}}(xy_\Phi e_1)\\
&=\alpha \mu x E_{\mathcal{M}}(y_\Phi e_1)=\alpha x\Phi(\mathbf{1})\\
&=\alpha x.
\end{align*}
Hence $\mathcal{P}(\Phi,\alpha)\subseteq \mathcal{M}(\Phi,\alpha)$.
From the above discussion, we obtain that $\mathcal{M}(\Phi,\alpha)=\mathcal{P}(\Phi,\alpha)$.

(ii) It follows from $\Phi(x)^*=\Phi(x^*)$ for any $x\in\mathcal{M}$.

(iii) For any $x_i\in\mathcal{M}(\Phi,\alpha_i)$, $i=1,2$, by (i) we have
\begin{align*}
x_1x_2y_\Phi^*&=\alpha_2 x_1y_\Phi^* x_2=\alpha_1\alpha_2y_\Phi^* x_1x_2,\\
y_\Phi x_1x_2&=\alpha_1 x_1y_\Phi x_2=\alpha_1\alpha_2 x_1x_2y_\Phi.
\end{align*}
It implies that $x_1x_2\in\mathcal{P}(\Phi,\alpha_1\alpha_2)=\mathcal{M}(\Phi,\alpha_1\alpha_2)$.
In the second part, we have
\begin{align*}
\mathcal{M}(\Phi,\alpha)\mathcal{M}(\Phi,\overline{\alpha})\mathcal{M}(\Phi,\alpha)\mathcal{M}(\Phi,\overline{\alpha}) 
\subseteq & \mathcal{M}(\Phi,\alpha)\mathcal{M}(\Phi,1) \mathcal{M}(\Phi,\overline{\alpha})\\
\subseteq & \mathcal{M}(\Phi,\alpha)\mathcal{M}(\Phi,\overline{\alpha}),
\end{align*}
and
\begin{align*}
[\mathcal{M}(\Phi,\alpha)\mathcal{M}(\Phi,\overline{\alpha})]^*=\mathcal{M}(\Phi,\overline{\alpha})^*\mathcal{M}(\Phi,\alpha)^*=\mathcal{M}(\Phi,\alpha)\mathcal{M}(\Phi,\overline{\alpha}),
\end{align*}
where $\mathcal{A}\mathcal{B}:=\text{span}\{ab:\ a\in\mathcal{A},\ b\in\mathcal{B}\}$ and $\mathcal{A}^*:=\{a^*:\ a\in\mathcal{A} \}$ for any $*$-algebra $\mathcal{A}$.

(iv) and (v) 
Let $x\in\mathcal{M}(\Phi,\alpha)$, $x\neq0$. 
By the polar decomposition $x=v|x|$, where $v\in\mathcal{M}$ is the partial isometry with the initial space $\mathcal{R}(x^*)$ and the final space $\mathcal{R}(x)$.
Since $x^*x\in\mathcal{M}(\Phi,\overline{\alpha})\mathcal{M}(\Phi,\alpha)\subseteq\mathcal{M}(\Phi,1)$ and $\mathcal{M}(\Phi,1)$ is a von Neumann algebra, we obtain $|x|\in\mathcal{M}(\Phi,1)$, i.e., $y_\Phi|x|=|x|y_\Phi$.
Thus  $y_\Phi\mathcal{R}(x^*)=\mathcal{R}(x^*)y_\Phi.$
Moreover, 
\begin{align*}
\alpha y_\Phi^* v|x|=\alpha y_\Phi^* x=xy_\Phi^*=v|x|y_\Phi^*=vy_\Phi^*|x|.
\end{align*}
Thus $(\alpha y_\Phi^* v- vy_\Phi^*)|x|=0$ and then $(\alpha y_\Phi^* v- vy_\Phi^*)\mathcal{R}(x^*)=0.$
We obtain that $vy_\Phi^*=\alpha y_\Phi^* v$.
Similarly, $y_\Phi v=\alpha vy_\Phi$.
Hence $v\in\mathcal{M}(\Phi,\alpha)$.
Note that $vv^*,v^*v\in\mathcal{M}(\Phi,1)$ and $\mathcal{M}(\Phi,1)$ is a finite factor. 
Then there exist finitely many  partial isometries $\{v_{j}\}_{j=1}^{\ell}$ and $  \{w_{j}\}_{j=1}^{\ell}$ in $\mathcal{M}(\Phi,1)$ such that $\displaystyle \sum_{j=1}^{\ell} v_{j} vw_{j}$ is a unitary.
We denote this unitary by $u_{\alpha}$, and then $u_{\alpha}\in\mathcal{M}(\Phi,\alpha)$.
Since 
\begin{align*}
u_{\alpha}\mathcal{M}(\Phi,1)\subseteq\mathcal{M}(\Phi,\alpha) \text{ and } u_{\alpha}^*\mathcal{M}(\Phi,\alpha)\subseteq\mathcal{M}(\Phi,1),
\end{align*}
 we have $\mathcal{M}(\Phi,\alpha)=u_{\alpha}\mathcal{M}(\Phi,1)$.
Since $u_{\alpha}\mathcal{M}(\Phi,1)u_{\alpha}^*=\mathcal{M}(\Phi,1)$, we have $u_{\alpha}\mathcal{M}(\Phi,1)=\mathcal{M}(\Phi,1)u_{\alpha}$.
Similarly, there exists a unitary $u_{\alpha^n}\in\mathcal{M}(\Phi,\alpha^n)$ such that $\mathcal{M}(\Phi,\alpha^n)=\mathcal{M}(\Phi,1)u_{\alpha^n}$.
Then 
\begin{align*}
    \mathcal{M}(\Phi,\alpha)^n=\mathcal{M}(\Phi,1)u_{\alpha}^n=(\mathcal{M}(\Phi,1)u_{\alpha}^n u_{\alpha^n}^* )u_{\alpha^n}=\mathcal{M}(\Phi,1)u_{\alpha^n}= \mathcal{M}(\Phi,\alpha^n).
\end{align*}
The third equality is due to the fact that $u_{\alpha}^n u_{\alpha^n}^*$ is a unitary in $\mathcal{M}(\Phi,1)$.
If $\alpha\in \Gamma$, then $\overline{\alpha}\in\Gamma$ from (ii).
If $\alpha,\beta\in\Gamma$, then there exist unitary $u_{\alpha}\in\mathcal{M}(\Phi,\alpha)$ and $u_{\beta}\in\mathcal{M}(\Phi,\beta)$. 
From (iii), $0\neq u_{\alpha}u_{\beta}\in\mathcal{M}(\Phi,\alpha\beta)$, and hence $\alpha\beta\in\Gamma$.
Therefore,  $\Gamma$ is an abelian subgroup of $ U(1)$.
Since $\sigma(\Phi)=\sigma(y_\Phi)$, $\Gamma$ is a finite group.
Hence $\Gamma$ is a finite cyclic group.
The proof is completed.
\end{proof}
\begin{remark}
In Theorem \ref{thm:entry}, we see that $\mathcal{M}(\Phi,1)=\mathcal{P}(\Phi,1)$ is a von Neumann subalgebra of $\mathcal{M}$.
The conditions on $\Phi$ can be relaxed to only require  that  $\Phi$ is a unital 2-positive map and there exists a faithful normal state $\omega$ on $\mathcal{M}$ such that $\omega\circ\Phi=\omega$ (see \cite[Theorem 2.2.11]{Sto13}).
However, Theorem \ref{thm:entry} is not true for unital 2-positive maps.
The finite index is crucial in the proof of Theorem \ref{thm:entry}.
 %   Suppose that $\Phi$ is a unital 2-positive map on $\mathcal{M}$.
 %   Assume that there exists a faithful normal state on $\mathcal{M}$ such that $\omega\circ\Phi=\omega$.
 %   Then the Perron-Frobenius eigenvector space $\mathcal{M}(\Phi,1)$ of $\Phi$ is a von Neumann subalgebra of $\mathcal{M}$ (see \cite[Theorem 2.2.11]{Sto13} ).
\end{remark}

\begin{remark}
    For more details on bimodule categories, we refer to \cite{ Bis97,Con94, Pop86}.
\end{remark}

Suppose that $\mathcal{N}\subseteq\mathcal{M}$ is irreducible and $\Phi$ is a bimodule quantum channel.
By Proposition \ref{prop:PF theorem for bimodule map}, there exists $\Psi\in \CP$ such that $\Psi\Phi=r\Psi$,
which is equivalent to $\Phi^*\Psi^*=r\Psi^*$.
Note that $\mathcal{N}'\cap \mathcal{M}=\mathbb{C}\mathbf{1}$.
We have $\Psi^*(\mathbf{1})=c\mathbf{1}$, where $c>0$.
Hence $\Phi^*(\mathbf{1})=r\mathbf{1}$.
We can immediately obtain the following corollary.
\begin{corollary}
    Suppose $\mathcal{N}\subseteq\mathcal{M}$ is an irreducible subfactor and $\Phi$ is a bimodule quantum channel.
    Then all the statements (i) to (v) in Theorem \ref{thm:entry} hold for $\Phi$.
    In particular, $\mathcal{P}(\Phi,1)$ is an intermediate factor of $\mathcal{N}\subseteq\mathcal{M}$.
\end{corollary}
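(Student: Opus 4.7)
The plan is to verify the two hypotheses of Theorem \ref{thm:entry} in the irreducible setting -- namely the existence of a faithful normal state $\omega$ with $\omega\circ\Phi=\omega$ and the factoriality of $\mathcal{M}(\Phi,1)$ -- and then invoke that theorem directly. The intermediate-factor assertion then comes for free from the definition of $\mathcal{P}(\Phi,1)$.

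First I would settle the invariant-state hypothesis. Since $\Phi$ is a unital completely positive map, $\|\Phi\|=1$, so the spectral radius of $\Phi$ is $r=1$. The paragraph immediately preceding the corollary already produces $\Phi^*(\mathbf{1})=r\mathbf{1}=\mathbf{1}$, by combining Proposition \ref{prop:PF theorem for bimodule map} with irreducibility $\mathcal{N}'\cap\mathcal{M}=\mathbb{C}\mathbf{1}$ applied to the positive bimodule element $\Psi^*(\mathbf{1})$. I then translate $\Phi^*(\mathbf{1})=\mathbf{1}$ into $\tau\circ\Phi=\tau$ by a short calculation using Proposition \ref{prop:phi formula} and the trace property on $\mathcal{M}_1$: for all $x\in\mathcal{M}$,
\begin{align*}
\tau(\Phi(x))=\mu\tau_1(y_\Phi xe_1)=\mu\tau(E_\mathcal{M}(e_1y_\Phi)x),
\end{align*}
and the right-hand side equals $\tau(x)$ for all $x$ iff $\mu E_\mathcal{M}(e_1y_\Phi)=\mathbf{1}$, iff (taking adjoints) $\Phi^*(\mathbf{1})=\mu E_\mathcal{M}(y_\Phi^* e_1)=\mathbf{1}$. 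Thus $\omega:=\tau$ is a faithful normal state with $\omega\circ\Phi=\omega$.

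Next I would record that $\mathcal{M}(\Phi,1)$ is a factor. This was observed in the paragraph defining $\mathcal{P}(\Phi)$: the intermediate subalgebra $\mathcal{P}(\Phi)\subseteq\mathcal{M}$ always satisfies $\mathcal{P}(\Phi)'\cap\mathcal{P}(\Phi)\subseteq\mathcal{N}'\cap\mathcal{M}$, so under the irreducibility assumption $\mathcal{N}'\cap\mathcal{M}=\mathbb{C}\mathbf{1}$ it has trivial center. Theorem \ref{thm:entry}(i) identifies $\mathcal{M}(\Phi,1)$ with $\mathcal{P}(\Phi,1)=\mathcal{P}(\Phi)$, so factoriality transfers. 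With both hypotheses in place, Theorem \ref{thm:entry} applies and yields (i)--(v). The closing assertion is immediate: $\mathcal{N}\subseteq\mathcal{P}(\Phi,1)\subseteq\mathcal{M}$ by the very definition of $\mathcal{P}(\Phi,1)$, and it has just been shown to be a factor, so it is an intermediate factor. There is no genuine obstacle; the only non-cosmetic step is the one-line identification of $\Phi^*(\mathbf{1})=\mathbf{1}$ with trace-preservation of $\Phi$, and the rest is assembly.
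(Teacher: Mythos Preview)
Your proof is correct and follows essentially the same route as the paper, which simply records $\Phi^*(\mathbf{1})=r\mathbf{1}$ in the paragraph before the corollary and then asserts the result. You have filled in the two details the paper leaves implicit: that $\Phi^*(\mathbf{1})=\mathbf{1}$ forces $\tau\circ\Phi=\tau$ (your computation via Proposition~\ref{prop:phi formula} is fine, though the one-line version $\tau(\Phi(x))=\tau(x\,\Phi^*(\mathbf{1}))=\tau(x)$ from the adjoint relation is shorter), and that factoriality of $\mathcal{M}(\Phi,1)$ comes from $\mathcal{M}(\Phi,1)=\mathcal{P}(\Phi)$ via part~(i) together with the earlier observation that $\mathcal{P}(\Phi)$ is a factor in the irreducible case.
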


\begin{remark}
    We call the finite cyclic group $\Gamma$ in Theorem \ref{thm:entry} the phase group of the bimodule map $\Phi$.
\end{remark}

% \begin{remark}\label{rem:irreducible}
%     In Theorem \ref{thm:entry} (v), the condition ``$\mathcal{N}\subseteq\mathcal{M}$ is an irreducible subfactor" can be replaced by ``$\mathcal{M}(\Phi,1)$ is a factor".
% \end{remark}
\begin{remark}
Let $(\mathcal{M},\phi, \xi)$ be an ergodic dynamic system defined in \cite{AlbHoe78} by Albeverio and H{\o}egh-Krohn.
%Let $\Gamma(\phi)$ be the set of all the discrete eigenvalues on the unit circle for $\phi$.
In \cite[Theorem 2.2]{AlbHoe78}, they proved that $\Gamma(\phi)$,  the set of all the discrete eigenvalues on the unit circle for $\phi$,  is an abelian group.
If $\alpha\in\Gamma(\phi)$ then $\alpha$ is the simple eigenvalue of $\phi$ and $\phi(u_{\alpha})=\alpha u_{\alpha}$, where
$u_{\alpha}$ is a unitary operator in $\mathcal{M}$.
%If $\alpha,\beta\in\Gamma(\phi)$ with $\phi(u_{\alpha})=\alpha u_{\alpha}$ and $\phi(u_{\beta})=\beta u_{\beta}$
%then $\phi(u_{\alpha}u_{\beta})=\alpha\beta u_{\alpha\beta}$ and $\phi(u_{\alpha}^*)=\overline{\alpha}u_{\alpha}^*$.
In Theorem \ref{thm:entry}, we prove that if $\alpha\in \Gamma$, then the eigenvector space $\mathcal{M}(\Phi,\alpha)$ is an $\mathcal{N}$-$\mathcal{N}$-bimodule.
In particular, $\mathcal{M}(\Phi,1)$ is an intermediate von Neumann algebra of $\mathcal{N}\subseteq\mathcal{M}$.
\end{remark}

\begin{definition}[Relative Irreducibility]
    Suppose $\mathcal{N}\subseteq\mathcal{M}$ is an inclusion of von Neumann algebras.
    We say an $\mathcal{N}$-$\mathcal{N}$-bimodule positive map $\Phi$: $\mathcal{M}\to\mathcal{M}$ is {\sl relatively irreducible} if 
 $\Phi(p)\leq cp$ for some projection $p\in\mathcal{M}$ and positive number $c>0$ implies that $p\in\mathcal{N}$.
\end{definition}

Suppose $\mathcal{N}\subseteq\mathcal{M}$ is a subfactor of finite index.
Let $\Phi\in \mathbf{CP}_{\mathcal{N}}(\mathcal{M})$.
Recall that $\Phi^*$ is defined in Remark \ref{rem:operation bimodule map} such that $\tau(\Phi(x)y)=\tau(x\Phi^*(y))$ for any $x,y\in\mathcal{M}$.
One can check that $\Phi$ is relatively irreducible if and only if $\Phi^*$ is relatively irreducible.

\begin{remark}\label{rem:relative irr and irr}
    Suppose $\mathcal{N}\subseteq\mathcal{M}$ is an inclusion of von Neumann algebras.
    If $\mathcal{N}=\mathbb{C}\mathbf{1}$ then the relative irreducibility is the irreducibility in \cite{EvaHoe78}.
   If $\mathcal{N}$ is a factor then the relative irreducibility of a bimodule positive map $\Phi$ implies the irreducibility of $\Phi|_{\mathcal{N}'\cap\mathcal{M}}$. 
\end{remark}

\begin{lemma}
     Suppose $\mathcal{N}\subseteq\mathcal{M}$ is an inclusion of von Neumann algebras and $\Phi$ is a relatively irreducible bimodule quantum channel on $\mathcal{M}$.
     Then $\Phi$ is faithful.
\end{lemma}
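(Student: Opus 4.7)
The plan is to show that the support (range) projection of any positive element annihilated by $\Phi$ must lie in $\mathcal{N}$ and then collide this with unitality. So I would start from $x\in\mathcal{M}^+$ with $\Phi(x)=0$ and aim to conclude $x=0$.

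First I would apply Kadison--Schwarz inductively. Since $\Phi$ is a unital completely positive map, $\Phi(y^*y)\ge \Phi(y)^*\Phi(y)$ for all $y\in\mathcal{M}$. Taking $y=x^{1/2^{n+1}}$ gives $0\le \Phi(x^{1/2^n})=\Phi(y^*y)\ge \Phi(y)^*\Phi(y)$, so $\Phi(x^{1/2^n})=0$ implies $\Phi(x^{1/2^{n+1}})=0$. Starting from $\Phi(x)=0$, we obtain $\Phi(x^{1/2^n})=0$ for every $n\ge 0$. Now let $p:=\mathcal{R}(x)$ be the range projection of $x$. By continuous functional calculus, $x^{1/2^n}$ converges strongly to $p$ as $n\to\infty$, and since $\Phi$ is normal, $\Phi(p)=\lim_n \Phi(x^{1/2^n})=0$.

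Next I would invoke the relative irreducibility: the inequality $\Phi(p)=0\le 1\cdot p$ forces $p\in\mathcal{N}$. But $\Phi$ is an $\mathcal{N}$-$\mathcal{N}$-bimodule map with $\Phi(\mathbf{1})=\mathbf{1}$, so for any $n\in\mathcal{N}$ we have $\Phi(n)=\Phi(n\cdot\mathbf{1})=n\Phi(\mathbf{1})=n$. In particular $\Phi(p)=p$, and combined with $\Phi(p)=0$ this yields $p=0$, hence $x=0$. Equivalently, $\Phi(y^*y)=0 \Rightarrow y=0$, which is faithfulness.

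The only delicate point is the Kadison--Schwarz/normality step producing $\Phi(p)=0$ from $\Phi(x)=0$; everything else is a direct application of the definitions. No subfactor structure, no finite index, and no Fourier-analytic machinery is required --- only complete positivity, unitality, $\mathcal{N}$-bilinearity, normality, and the relative irreducibility hypothesis.
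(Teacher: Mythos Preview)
Your proof is correct and follows essentially the same route as the paper: use Kadison--Schwarz iteratively to get $\Phi(x^{1/2^n})=0$, pass to the limit by normality to obtain $\Phi(\mathcal{R}(x))=0$, invoke relative irreducibility to place $\mathcal{R}(x)$ in $\mathcal{N}$, and then use unitality plus the bimodule property to conclude $\mathcal{R}(x)=\Phi(\mathcal{R}(x))=0$. The only cosmetic difference is that you phrase the final step via $\Phi(n)=n\Phi(\mathbf{1})=n$ for $n\in\mathcal{N}$, while the paper states it more tersely; both are the same argument.
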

\begin{proof}
    Suppose that $\Phi(x)=0$ for some $x\in\mathcal{M}^+$.
    By the Kadison-Schwarz inequality, $\Phi(x^{1/2})^2\leq\Phi(x)=0$, and hence $\Phi(x^{1/2})=0$.
    Continuing this step, we obtain $\Phi(x^{1/2^n})=0$ for any $n\geq1$.
    Therefore, $\Phi(\mathcal{R}(x))=\displaystyle \lim_{n\to \infty}\Phi(x^{1/2^n})=0$ and $\Phi(\mathcal{R}(x))\leq \mathcal{R}(x)$.
Hence $\mathcal{R}(x)\in\mathcal{N}$ by the  relative irreducibility of $\Phi$, and then $\mathcal{R}(x)=\Phi(\mathcal{R}(x))=0$.
    Hence $\Phi$ is faithful.
\end{proof}

\begin{lemma}\label{lem:faithful state}
   Suppose $\mathcal{N}\subseteq\mathcal{M}$ is a subfactor of finite index and $\Phi$ is a relatively irreducible bimodule quantum channel on $\mathcal{M}$.
   Then there exists a normal faithful state $\omega$ on $\mathcal{M}$ such that $\omega\Phi=\omega$.
\end{lemma}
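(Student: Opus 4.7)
The plan is to construct $\omega$ as $\omega(x) := \tau(\rho x)/\tau(\rho)$ for a suitable positive fixed point $\rho$ of the trace-adjoint $\Phi^*$, and then to upgrade $\rho$ to one of full support by combining the bimodule structure, relative irreducibility, and the factor hypothesis on $\mathcal{N}$.

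First I would check that $\Phi^* \in \CP$ (normality, complete positivity, and the $\mathcal{N}$-bimodule property all transfer through the trace pairing) and that $r(\Phi^*) = r(\Phi) = 1$: since $\Phi$ is unital we have $1 \in \sigma(\Phi)$, and the spectra of $\Phi$ and $\Phi^*$ are complex conjugates of one another. Corollary \ref{cor:PF theorem for bimodule map} applied to $\Phi^*$ then delivers a nonzero positive $\rho \in \mathcal{M}$ with $\Phi^*(\rho) = \rho$, and the adjoint identity $\tau(\rho\Phi(x)) = \tau(\Phi^*(\rho)x)$ gives $\omega\circ\Phi = \omega$ after normalization.

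For faithfulness, I would introduce
\[
p_{\max} := \bigvee\bigl\{\mathcal{R}(\rho') : \rho' \in \mathcal{M}^+,\ \Phi^*(\rho') = \rho'\bigr\}.
\]
Because $\Phi^*$ is an $\mathcal{N}$-bimodule map, unitary $\mathcal{N}$-conjugation preserves the fixed cone, so $u p_{\max} u^* = p_{\max}$ for every unitary $u \in \mathcal{N}$ and hence $p_{\max} \in \mathcal{N}'\cap\mathcal{M}$. Writing $q = \mathbf{1} - p_{\max}$, for any positive fixed $\rho'$ one has $\rho' q = 0$, so $\tau(\rho'^{1/2}\Phi(q)\rho'^{1/2}) = \tau(\Phi^*(\rho')q) = 0$; positivity of $\rho'^{1/2}\Phi(q)\rho'^{1/2}$ then forces $\Phi(q)\mathcal{R}(\rho') = 0$. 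Taking the supremum over $\rho'$ yields $\Phi(q)p_{\max} = 0$, i.e.\ $\Phi(q) \leq \|\Phi(q)\|\,q$. Relative irreducibility of $\Phi$ places $q \in \mathcal{N}$, and hence $q \in \mathcal{N}\cap(\mathcal{N}'\cap\mathcal{M}) = Z(\mathcal{N}) = \mathbb{C}\mathbf{1}$ because the subfactor convention makes $\mathcal{N}$ a factor. Since $\rho \neq 0$ rules out $q = \mathbf{1}$, we conclude $p_{\max} = \mathbf{1}$. A standard countable-sum argument over positive fixed points whose ranges join to $\mathbf{1}$ (legitimate because any mutually orthogonal family of nonzero projections in a finite factor is countable) then produces a single $\Phi^*$-fixed $\rho$ with $\mathcal{R}(\rho) = \mathbf{1}$, after which $\omega(\cdot) = \tau(\rho\,\cdot)/\tau(\rho)$ is the desired normal, faithful, $\Phi$-invariant state.

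The main obstacle is excluding proper-support fixed points of $\Phi^*$: relative irreducibility of $\Phi$ alone only gives $q \in \mathcal{N}$, not $q = 0$. The extra ingredients are the bimodule structure of $\Phi^*$, which forces $p_{\max} \in \mathcal{N}'\cap\mathcal{M}$, together with the factor hypothesis on $\mathcal{N}$, which collapses $\mathcal{N}\cap(\mathcal{N}'\cap\mathcal{M})$ to $\mathbb{C}\mathbf{1}$; jointly these pin $q$ to $0$. Everything else—transferring CPB properties to $\Phi^*$, the adjoint identity, and the range-projection manipulation—is routine.
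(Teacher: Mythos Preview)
Your argument is correct, but the paper takes a shorter route. Instead of working inside $\mathcal{M}$ and running a maximal-support argument, the paper observes that $\Phi^*$ is again a relative irreducible bimodule map, so by Remark~\ref{rem:relative irr and irr} its restriction $\Phi^*|_{\mathcal{N}'\cap\mathcal{M}}$ is irreducible in the classical sense on the \emph{finite-dimensional} $C^*$-algebra $\mathcal{N}'\cap\mathcal{M}$. The Evans--H{\o}egh-Krohn Perron--Frobenius theorem then immediately supplies a strictly positive (hence invertible) eigenvector $x\in\mathcal{N}'\cap\mathcal{M}$ with $\Phi^*(x)=x$ (the eigenvalue is $1$ because $\Phi^*$ is trace-preserving), and $\omega(\cdot)=\tau(x\,\cdot)$ is automatically faithful. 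What you gain by your approach is self-containment: you do not invoke the finite-dimensional theorem as a black box, and your $p_{\max}$/relative-irreducibility step is in the spirit of Lemma~\ref{lem:max support}. What the paper gains is brevity: passing to $\mathcal{N}'\cap\mathcal{M}$ collapses the whole faithfulness question to finite dimensions, where strict positivity of the Perron eigenvector is part of the classical statement, and no countable-sum patching is needed.
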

\begin{proof}
Since $\Phi$ is relatively irreducible, $\Phi^*$ is also relatively irreducible.
By Remark \ref{rem:relative irr and irr}, $\Phi^*|_{\mathcal{N}'\cap\mathcal{M}}$ is irreducible.
Then there exists a strictly positive element $x\in \mathcal{N}'\cap\mathcal{M}$ such that  $\Phi^*(x)=x$ and $\tau(x)=1$.
We define $\omega (y):=\tau(yx)$, $y\in\mathcal{M}$.
Then $\omega$ is a normal faithful  state on $\mathcal{M}$.
Moreover, $\omega(\Phi(y))=\tau(\Phi(y)x)=\tau(y\Phi^*(x))=\tau(yx)=\omega(y)$, $y\in\mathcal{M}$.
Hence $\omega\Phi=\omega$.
\iffalse
By Proposition \ref{prop:PF theorem for bimodule map}, there exists a nonzero $\Psi\in\mathbf{CP}_{\mathcal{N}}(\mathcal{M})$ such that $\Psi\Phi=\Psi$.
Recall that every bimodule map in $\mathbf{CP}_{\mathcal{N}}(\mathcal{M})$ correspondings to an element in $\mathcal{N}'\cap\mathcal{M}_1$, which is finite dimensional.
Hence we may suppose $\Psi$ has the maximal support such that $\Psi\Phi=\Psi$.
    Let $p=\text{supp}(\Psi)$.
    Then 
    \begin{align*}
        \Psi(p)=\Psi(\Phi(p))=\Psi(p\Phi(p)p).
    \end{align*}
    Since $p\Phi(p)p\leq p$, we have $p=p\Phi(p)p$, and then $p(\mathbf{1}-\Phi(p))p=0$, hence $p\Phi(\mathbf{1}-p)p=0$.
    Therefore, $\Phi(\mathbf{1}-p)\leq c(\mathbf{1}-p)$ for some $c>0$, and then $p\in\mathcal{N}$ by the relative irreducibility of $\Phi$.
    On the other hand, for any  unitary $u\in\mathcal{N}$, define $\Psi_u(x):=\Psi(uxu^*)$, $x\in\mathcal{M}$.
    Then $\text{supp}(\Psi_u)=u^*pu$ and $\Psi_u\Phi=\Psi_u$, hence $u^*pu\leq p$.
    Therefore, $u^*pup=u^*pu$ implies that $pup=pu$ for any $u\in\mathcal{N}$.
    Replacing $u$ by $u^*$, we obtain $pu^*p=pu^*$ and then $pup=up$.
    Hence $pu=up$ for any $u\in\mathcal{N}$, that is $p\in\mathcal{N}'\cap\mathcal{M}$.
    Finally, we obtain $p\in\mathcal{N}\cap\mathcal{N}'$ and then $p=\mathbf{1}$.
    Hence $\omega$ is faithful.
    \fi
    \end{proof}

\begin{theorem}[Relative Irreducibility]\label{thm:RI Frobenius factor}
    Suppose $\mathcal{N}\subseteq\mathcal{M}$ is a subfactor of finite index and $\Phi$ is a relatively irreducible bimodule quantum channel.
  Then the eigenvalues of $\Phi$ with modulus $1$ form a finite cyclic subgroup $\Gamma$ of the unit circle $ U(1)$.
  The  space of fixed points $\mathcal{M}(\Phi,1)=\mathcal{N}$.
  For each $\alpha\in\Gamma$, there exists a unitary $u_{\alpha}\in\mathcal{M}(\Phi,\alpha)$ such that $\mathcal{M}(\Phi,\alpha)=u_{\alpha}\mathcal{N}=\mathcal{N}u_{\alpha}$.
  Moreover, $\mathcal{M}(\Phi, \alpha)$ are invertible $\mathcal{N}$-$\mathcal{N}$-bimodules and they form a unitary fusion category.
\end{theorem}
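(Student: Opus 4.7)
The plan is to deduce Theorem \ref{thm:RI Frobenius factor} from the already-established Theorem \ref{thm:entry} by verifying its two hypotheses (existence of a $\Phi$-invariant faithful normal state, and factoriality of the fixed-point space) and then collapsing the fixed-point space all the way down to $\mathcal{N}$ itself via relative irreducibility.

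First, I would invoke Lemma \ref{lem:faithful state} directly, which under the hypothesis of relative irreducibility already supplies a normal faithful state $\omega$ on $\mathcal{M}$ with $\omega\Phi=\omega$. This clears the state-invariance hypothesis of Theorem \ref{thm:entry}.

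The main step is then to identify $\mathcal{M}(\Phi,1)$ with $\mathcal{N}$. By Theorem \ref{thm:entry}(i), $\mathcal{M}(\Phi,1)=\mathcal{P}(\Phi,1)$, which is a von Neumann subalgebra of $\mathcal{M}$ containing $\mathcal{N}$ (since any $y\in\mathcal{N}$ commutes with $y_\Phi\in\mathcal{N}'\cap\mathcal{M}_1$). For any projection $p\in\mathcal{M}(\Phi,1)$ we have $\Phi(p)=p\leq p$, so relative irreducibility forces $p\in\mathcal{N}$. Since $\mathcal{M}(\Phi,1)$ is a von Neumann algebra and is therefore the weak-operator closure of the linear span of its projections, this yields $\mathcal{M}(\Phi,1)\subseteq\mathcal{N}$, and combined with the reverse inclusion, $\mathcal{M}(\Phi,1)=\mathcal{N}$. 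In particular $\mathcal{M}(\Phi,1)$ is a factor, so the remaining hypothesis of Theorem \ref{thm:entry}(iv),(v) holds.

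With these two hypotheses verified, the conclusions transfer from Theorem \ref{thm:entry} without further work: part (iv) gives that $\Gamma=\sigma(\Phi)\cap U(1)$ is a finite cyclic group, and part (v) supplies the unitaries $u_\alpha\in\mathcal{M}(\Phi,\alpha)$ satisfying $\mathcal{M}(\Phi,\alpha)=u_\alpha\mathcal{N}=\mathcal{N}u_\alpha$, along with the identities $\mathcal{M}(\Phi,\alpha)^n=\mathcal{M}(\Phi,\alpha^n)$ which encode invertibility of the $\mathcal{N}$-$\mathcal{N}$-bimodules $\mathcal{M}(\Phi,\alpha)$ and the unitary fusion category structure categorifying $\Gamma$. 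The only genuinely non-trivial conceptual input here is the collapse $\mathcal{M}(\Phi,1)=\mathcal{N}$, which is a clean one-line consequence of relative irreducibility applied to each projection of $\mathcal{M}(\Phi,1)$; no further spectral or analytic work is needed beyond what Theorem \ref{thm:entry} already provides.
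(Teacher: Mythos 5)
Your proposal is correct and follows essentially the same route as the paper: invoke Lemma \ref{lem:faithful state} for the invariant faithful normal state, apply relative irreducibility to the projections of the von Neumann algebra $\mathcal{M}(\Phi,1)=\mathcal{P}(\Phi,1)$ to collapse it to $\mathcal{N}$ (hence a factor), and then read off all remaining conclusions from Theorem \ref{thm:entry}. The extra details you supply (that $\mathcal{P}(\Phi,1)$ is a von Neumann subalgebra containing $\mathcal{N}$ and is spanned by its projections) are exactly what the paper's shorter argument implicitly uses.
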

\begin{proof}
  By Lemma \ref{lem:faithful state}, there exists a faithful normal state $\omega$ on $\mathcal{M}$ such that $\omega\Phi=\omega$.
  For any projection $p\in\mathcal{M}(\Phi,1)$, $\Phi(p)=p$.
    Hence $p\in\mathcal{N}$ by the relative irreducibility, and then $\mathcal{M}(\Phi,1)=\mathcal{N}$.
The other statements directly follow from Theorem \ref{thm:entry}.
\end{proof}

% \begin{remark}
%     We have $u_{\alpha}u_{\beta}=\omega(\alpha,\beta) u_{\alpha\beta}$, where $\omega$ is a 3-cocycle in $H^3(\Gamma, U(1))$.
% \end{remark}
\iffalse
\subsection{Extensions of bimodule quantum channels}
Suppose that $\mathcal{N}\subseteq\mathcal{M}$ is a subfactor.
By Jones basic construction, we obtain the Jones tower:
\begin{align*}
&\ e_1\quad\quad\quad\quad e_2\quad\quad\quad\quad e_3\\
\mathcal{N}\quad\subset\quad\mathcal{M}\quad&\subset\quad\mathcal{M}_1\quad\subset\quad\mathcal{M}_2\quad\subset\quad\mathcal{M}_3\quad
\subset\quad\cdots  
\end{align*}
We have standard invariant:
\begin{align*}
\mathbb{C}\mathbf{1}=\mathcal{N}'\cap\mathcal{N}\quad\subset\quad &\mathcal{N}'\cap\mathcal{M}\quad\subset\quad \mathcal{N}'\cap\mathcal{M}_1\quad\subset\quad \mathcal{N}'\cap\mathcal{M}_2\quad\subset\quad\cdots\\
&\quad\ \cup\quad\quad\quad\quad\quad\quad\ \cup
\quad\quad\quad\quad\quad \quad\ \cup
\\
 &\mathcal{M}'\cap\mathcal{M}\quad\subset\quad \mathcal{M}'\cap\mathcal{M}_1\ \ \subset\ \ \mathcal{M}'\cap\mathcal{M}_2\quad\subset\quad\cdots
\end{align*}
Let $\Phi\in\CB$, it can be extended to an $\mathcal{N}$-$\mathcal{N}$ bimodule map $\Phi_k$ on $\mathcal{M}_k$ for any $k\geq 1$, which is given by
\begin{align*}
    \Phi_k(x)=\mu E_{\mathcal{M}_k}(y_{\Phi}xe_1),\quad x\in\mathcal{M}_k
\end{align*}
\fi

\section{Phase groups for $\mathfrak{F}$-positive elements}\label{sec:Phase groups for F positive elements}
In this section, we show an intrinsic description of the phase groups of bimodule maps in terms of planar algebras.
Suppose $\mathscr{P}=\{\mathscr{P}_{k,\pm}\}_{k\geq 0}$ is an irreducible subfactor planar algebra. %where $\mathscr{P}_{k,+}\cong\mathcal{N}'\cap\mathcal{M}_{k-1}$ and $\mathscr{P}_{k,-}\cong\mathcal{M}'\cap\mathcal{M}_{k}$.
For any $x\in\mathscr{P}_{2,+}$, its Fourier transform $\mathfrak{F}(x)$ is defined in Equation \eqref{eq:Fourier transform}.
For simplicity, we denote it by $\widehat{x}$.
Recall that an element $x\in\mathscr{P}_{2,+}$ is called $\mathfrak{F}$-positive if $\widehat{x}$ is positive (see \cite[Definition 2.6]{HJLW23}).
A projection $p\in\mathscr{P}_{2,\pm}$ is called a biprojection if $\widehat{p}$ is a positive multiple of a projection.
A projection $q\in\mathscr{P}_{2,\pm}$ is said to be a left shift of a biprojection $p$ if $tr_2(p)=tr_2(q)$ and $\displaystyle q\ast p=\frac{tr_2(p)}{\sqrt{\mu}}q$; a projection $q\in\mathscr{P}_{2,\pm}$ is said to be a right shift of a biprojection $p$ if $tr_2(p)=tr_2(q)$ and $\displaystyle p\ast q=\frac{tr_2(p)}{\sqrt{\mu}}q$ (see \cite[Definition 6.5]{JLW16}).
We present some results in quantum Fourier analysis.
\begin{proposition}[Theorem 4.1, \cite{JLW19}]\label{prop:Sum set estimate}
    Suppose $\mathscr{P}$ is an irreducible subfactor planar algebra. 
    Let $p$ and $q$ be projections in $\mathscr{P}_{2,\pm}$.
    Then 
    \begin{align*}
        \max\{ tr_2(p),tr_2(q)\}\leq \mathcal{S}(p\ast q).
    \end{align*}
    Moreover, $\mathcal{S}(p\ast q)=tr_2(q)$ if and only if $\displaystyle \frac{\delta}{tr_2(p)}p\ast q$ is a projection.
\end{proposition}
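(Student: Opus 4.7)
The plan is to deduce the sum-set estimate from three ingredients: a mass identity for the convolution of positive elements, the mixed Young inequality, and a Cauchy--Schwarz bound linking $\|x\|_1$, $\|x\|_2$, and $\mathcal{S}(x)$ for $x \geq 0$. Throughout write $\delta = \mu^{1/2}$ and let $\|x\|_r = tr_2(|x|^r)^{1/r}$ denote the trace norms on $\mathscr{P}_{2,\pm}$.

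The first step is the mass identity $\|p*q\|_1 = \delta^{-1}\, tr_2(p)\, tr_2(q)$. Since $p, q$ are projections, $p*q \geq 0$, and so $\|p*q\|_1 = tr_2(p*q)$. The value is read off directly from diagram \eqref{eq:convolution1} by closing the outer strands: the two through-strings of $p*q$ combine with the trace closure to produce a single internal circle contributing the factor $\delta^{-1}$, while the remaining closed pieces evaluate to $tr_2(p)$ and $tr_2(q)$ independently. The second step is the mixed Young inequality
\begin{align*}
\|p*q\|_2 \leq \delta^{-1}\,\|p\|_2\,\|q\|_1 = \delta^{-1}\,tr_2(p)^{1/2}\,tr_2(q),
\end{align*}
and its symmetric form $\|p*q\|_2 \leq \delta^{-1}\,tr_2(p)\,tr_2(q)^{1/2}$. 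Each follows by writing $p*q = \mathfrak{F}^{-1}(\mathfrak{F}(q)\mathfrak{F}(p))$, using Plancherel $\|\mathfrak{F}(x)\|_2 = \|x\|_2$, and bounding $\|\mathfrak{F}(q)\|_\infty \leq \delta^{-1/2}\|q\|_1$ via Hausdorff--Young; alternatively one can apply Cauchy--Schwarz directly to the stacked diagrams.

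The third step is the elementary observation that for any $x \geq 0$,
\begin{align*}
\|x\|_1 = tr_2(\mathcal{R}(x)\,x) \leq \|\mathcal{R}(x)\|_2\,\|x\|_2 = \mathcal{S}(x)^{1/2}\,\|x\|_2,
\end{align*}
so $\mathcal{S}(x) \geq \|x\|_1^2/\|x\|_2^2$. Substituting $x = p*q$ and inserting the bounds from the previous two steps gives
\begin{align*}
\mathcal{S}(p*q) \geq \frac{\delta^{-2}\,tr_2(p)^2\,tr_2(q)^2}{\delta^{-2}\,tr_2(p)\,tr_2(q)^2} = tr_2(p),
\end{align*}
and the symmetric form of the $L^2$ bound upgrades this to $\mathcal{S}(p*q) \geq \max\{tr_2(p),\,tr_2(q)\}$, yielding the stated inequality.

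For the equality case $\mathcal{S}(p*q) = tr_2(q)$, equality must hold in the Cauchy--Schwarz of the third step, which forces $p*q = c\,\mathcal{R}(p*q)$ for some $c \geq 0$. Pairing this with the mass identity of the first step pins down $c = tr_2(p)/\delta$, so $\frac{\delta}{tr_2(p)}\,p*q = \mathcal{R}(p*q)$ is a projection; the converse is immediate, since if $\frac{\delta}{tr_2(p)}\,p*q$ is a projection $r$ then $p*q$ is a scalar multiple of $r$, forcing $\mathcal{S}(p*q) = tr_2(r)$, and matching traces with Step 1 gives $tr_2(r) = tr_2(q)$. The point I expect to be most delicate is the diagrammatic calibration of the constants in the first two steps: all $\delta$-powers must line up exactly, and a miscount of internal loops in the convolution closure would alter the stated normalization $\frac{\delta}{tr_2(p)}(p*q)$ of the limiting projection, even though the shape of the argument would remain unchanged.
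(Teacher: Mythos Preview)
The paper does not supply its own proof of this proposition; it is quoted verbatim from \cite{JLW19} and used as a black box in the proof of Theorem~\ref{thm:Frobenius irreducible}. Your argument---mass identity $\|p*q\|_1=\delta^{-1}tr_2(p)\,tr_2(q)$, Young's inequality $\|p*q\|_2\le\delta^{-1}\|p\|_2\|q\|_1$, and the Cauchy--Schwarz support bound $\mathcal{S}(x)\ge\|x\|_1^2/\|x\|_2^2$---is exactly the route taken in \cite{JLW19}, so there is no alternative in the present paper to compare against. Your worry about the $\delta$-bookkeeping is reasonable, but the normalisations you wrote agree with those of \cite{JLW16,JLW19}, and your equality analysis (Cauchy--Schwarz forces $p*q=c\,\mathcal{R}(p*q)$, the mass identity fixes $c=tr_2(p)/\delta$) is correct.
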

\begin{proposition}[Proposition 3.2, \cite{JLW19}]\label{prop:biprojection}
     Suppose $\mathscr{P}$ is an irreducible subfactor planar algebra.
     Let $p\in\mathscr{P}_{2,\pm}$ be a projection such that $p$ is $\mathfrak{F}$-positive.
     Then $p$ is a biprojection.
\end{proposition}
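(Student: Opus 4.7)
Let me set $q := \widehat{p} \in \mathscr{P}_{2,\mp}$, which is positive by hypothesis. The goal is to show $q$ is a nonnegative scalar multiple of a projection, equivalently that the spectral decomposition $q = \sum_i \lambda_i q_i$ (with $\lambda_i > 0$ and $\{q_i\}$ pairwise orthogonal projections summing to $\mathcal{R}(q)$) collapses to a single distinct eigenvalue.

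The first step is to extract numerical information from both hypotheses. Because the Fourier transform on $\mathscr{P}$ is a 90-degree rotation that intertwines multiplication with the dual convolution, the identity $p^2 = p$ translates into a self-convolution equation for $q$ in $\mathscr{P}_{2,\mp}$: namely, $q$ is a positive convolution idempotent up to a fixed normalization constant. Combining this with the trace--convolution formula pins down $tr_2(q) = \delta$, where $\delta = \sqrt{\mu}$. Plancherel then gives the second moment $tr_2(q^2) = tr_2(p)$. Thus we control two moments of the spectrum of $q$.

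The second step is Cauchy--Schwarz on the spectral data:
\begin{align*}
\delta^2 = (tr_2 \, q)^2 = \Bigl(\sum_i \lambda_i \, tr_2(q_i)\Bigr)^2 \leq \Bigl(\sum_i \lambda_i^2 \, tr_2(q_i)\Bigr)\Bigl(\sum_i tr_2(q_i)\Bigr) = tr_2(p) \cdot tr_2(\mathcal{R}(q)),
\end{align*}
with equality iff all $\lambda_i$ coincide, which is precisely the desired conclusion. Equivalently, we must show $tr_2(\mathcal{R}(q)) \leq \delta^2/tr_2(p)$ to saturate the lower bound already produced by the inequality above.

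To force this saturation, I would apply Proposition \ref{prop:Sum set estimate} to the convolution $p \ast p$ in $\mathscr{P}_{2,+}$. Since $p \ast p = \mathfrak{F}^{-1}(q^2)$ and $q^2 \geq 0$, the element $p \ast p$ is $\mathfrak{F}$-positive, so the sum set estimate gives $\mathcal{S}(p \ast p) \geq tr_2(p)$. The matching upper bound $\mathcal{S}(p \ast p) \leq tr_2(p)$ is extracted from the positivity of $q$ together with the convolution idempotence from Step 1: the ordering $q \leq \|q\|_\infty \mathcal{R}(q)$ feeds into the convolution identity to control the support trace $tr_2(\mathcal{R}(q))$ with the reverse inequality. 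The equality case of Proposition \ref{prop:Sum set estimate} then makes $p \ast p$ a positive multiple of a projection $r$ with $tr_2(r)=tr_2(p)$, and positivity of $q$ together with $q^2 = c\widehat{r}$ forces $q$ to have a single nonzero spectral value.

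The main obstacle is precisely this matching upper bound on $tr_2(\mathcal{R}(q))$, i.e., showing that $q$ cannot spread its support beyond $\delta^2/tr_2(p)$. This is the subfactor planar algebra analog of the classical fact that positive convolution idempotents on a finite group are uniform measures on subgroups, and while positivity plus the convolution identity morally forces this, converting the intuition into a rigorous argument using only the sum set estimate and the Plancherel/Young-type identities, without circularly invoking biprojection theory or an external subfactor interpretation, is the delicate technical heart of the proof.
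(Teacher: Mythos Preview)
The paper does not prove this proposition; it is quoted from \cite{JLW19} and used as a tool in Section~\ref{sec:Phase groups for F positive elements}. So there is no in-paper argument to compare against, and I comment on your outline directly.

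Your setup is correct. With $q=\widehat{p}\ge 0$, the relation $p^{2}=p$ does become the convolution identity $q\ast q=q$ in $\mathscr P_{2,\mp}$; the trace--convolution formula $tr_{2}(a\ast b)=\delta^{-1}tr_{2}(a)\,tr_{2}(b)$ (valid by irreducibility) then gives $tr_{2}(q)=\delta$, Plancherel gives $tr_{2}(q^{2})=tr_{2}(p)$, and your Cauchy--Schwarz reduction to the support bound $tr_{2}(\mathcal R(q))\le\delta^{2}/tr_{2}(p)$ is exactly the right target.

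The remainder does not close, for two reasons beyond the gap you already flag. First, you attack the wrong object: $\mathcal S(p\ast p)=tr_{2}\bigl(\mathcal R(p\ast p)\bigr)$ lives in $\mathscr P_{2,\pm}$, whereas the quantity you must bound, $tr_{2}(\mathcal R(q))$, lives in $\mathscr P_{2,\mp}$. The Fourier transform does not intertwine range projections, so a two-sided estimate on $\mathcal S(p\ast p)$ carries no direct information about $tr_{2}(\mathcal R(q))$; the relation $\widehat{p\ast p}=q^{2}$ only gives $\mathcal R(q^{2})=\mathcal R(q)$, which is automatic. Second, your endgame is circular: granting $p\ast p=\tfrac{tr_{2}(p)}{\delta}\,r$ with $r$ a projection, one gets $q^{2}=\tfrac{tr_{2}(p)}{\delta}\,\widehat r$, and then $q\ge 0$ has a single nonzero eigenvalue iff $\widehat r$ is a positive multiple of a projection, i.e.\ iff $r$ is a biprojection. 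But $r$ is a projection with $\widehat r=\tfrac{\delta}{tr_{2}(p)}\,q^{2}\ge 0$, so this is precisely another instance of the proposition being proved.

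The argument in \cite{JLW19} avoids this trap by a different mechanism: the identities $\|p\|_\infty=1$ and $\|\widehat p\|_{1}=tr_{2}(q)=\delta$ saturate the quantum Hausdorff--Young inequality $\|p\|_\infty\le\delta^{-1}\|\widehat p\|_{1}$, and the extremizer classification from \cite{JLW16} then identifies $p$ as a bi-shift of a biprojection, which together with $p=p^{*}\ge 0$ forces $p$ itself to be a biprojection. Your Cauchy--Schwarz/sum-set route does not supply a replacement for that extremizer input.
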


\begin{lemma}\label{lem:split}
Suppose $\mathscr{P}$ is an irreducible subfactor planar algebra.
Let $x\in\mathscr{P}_{2,+}$ be an $\mathfrak{F}$-positive element with $tr_{2,-}(\widehat{x})=\mu^{1/2}$.
Then for any $y\in \mathscr{P}_{2,-}$ such that $y*\widehat{x}=\alpha y$ for some $|\alpha|=1$, we have
\begin{align}\label{eq:split1}
\raisebox{-0.8cm}{
\begin{tikzpicture}[xscale=2.2, yscale=1.2]
\draw [blue] (0,0) rectangle (0.5, 0.5);
\node at (0.25, 0.25) {$y$};
\draw (0.15, 0.5)--(0.15, 0.9) (0.15, 0)--(0.15, -0.4) (0.35, 0)--(0.35, -0.4);
\begin{scope}[shift={(0.7, 0)}]
\draw [blue] (0,0) rectangle (0.5, 0.5);
\node at (0.25, 0.25) {$\widehat{x}^{1/2}$};
\draw (0.35, 0.5)--(0.35, 0.9) (0.15, 0)--(0.15, -0.4) (0.35, 0)--(0.35, -0.4);
\end{scope}
\draw (0.35, 0.5) .. controls +(0, 0.3) and +(0, 0.3) .. (0.85, 0.5);
%\draw (0.35, 0) .. controls +(0, -0.3) and +(0, -0.3) .. (0.85, 0);
\end{tikzpicture}}
=\alpha
\raisebox{-1.1cm}{
\begin{tikzpicture}[xscale=2.2, yscale=1.2]
\draw [blue] (0,0) rectangle (0.5, 0.5);
\node at (0.25, 0.25) {$y$};
\draw   (0.15, 0.5)--(0.15, 0.8) (0.35, 0.5)--(0.35, 0.8) ;
\begin{scope}[shift={(0.3, -0.9)}]
\draw [blue] (0,0) rectangle (0.5, 0.5);
\node at (0.25, 0.25) {$\widehat{x}^{1/2}$};
\draw (0.15, 0)--(0.15, -0.3) (0.35, 0)--(0.35, -0.3);
\draw (0.15, 0.5) .. controls +(0, 0.25) and +(0, 0.25) .. (-0.25, 0.5)--(-0.25, -0.3);
\end{scope}
\draw[rounded corners] (0.35, 0)--(0.65, -0.4) (0.15, 0)--(-0.15, -0.4) -- (-0.15, -1.2) ;
%\draw (0.35, 0) .. controls +(0, -0.3) and +(0, -0.3) .. (0.85, 0);
\end{tikzpicture}}.
\end{align}
In particular, we have for any $z\in \mathscr{P}_{2,-}$, 
\begin{align}\label{eq:split2}
(zy)*\widehat{x}=\alpha (z*\widehat{x})y.
\end{align}
\end{lemma}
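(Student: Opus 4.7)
The plan is to give a diagrammatic analog of the proof of Lemma~\ref{lem:commute}. Under the identification between elements of $\mathscr{P}_{2,+}$ and bimodule quantum channels, the hypothesis $tr_{2,-}(\widehat{x})=\mu^{1/2}$ is the planar version of a unitality condition, and $y\ast\widehat{x}=\alpha y$ with $|\alpha|=1$ is the planar counterpart of the phase-eigenvalue equation $\Phi(z)=\alpha z$. The first step is to establish the two trace-invariance identities
\[
(y^{\ast}y)\ast\widehat{x}=y^{\ast}y,\qquad (yy^{\ast})\ast\widehat{x}=yy^{\ast},
\]
by combining a Kadison--Schwarz-type inequality for convolution with the $\mathfrak{F}$-positive element $\widehat{x}$ with the fact that this convolution preserves the planar trace (because $tr_{2,-}(\widehat{x})/\mu^{1/2}=1$). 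These play the role of $\Phi(z^{\ast}z)=z^{\ast}z$ in Lemma~\ref{lem:commute}.

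Next I would regard the difference $T$ of the two sides of \eqref{eq:split1} as a single element of an appropriate $3$-box space (both pictures have two free strands on the top and four on the bottom) and compute the scalar $tr(T^{\ast}T)$ by closing up the resulting diagram. Expanding $T^{\ast}T$ produces four closed diagrams; merging $\widehat{x}^{1/2}\cdot\widehat{x}^{1/2}=\widehat{x}$ and reading the cups and caps as Jones projections, one obtains, up to a common positive normalization constant,
\[
tr(T^{\ast}T)=tr\bigl((y^{\ast}y)\ast\widehat{x}\bigr)+tr\bigl((yy^{\ast})\ast\widehat{x}\bigr)-\overline{\alpha}\,tr\bigl(y^{\ast}(y\ast\widehat{x})\bigr)-\alpha\,tr\bigl(y(y^{\ast}\ast\widehat{x})\bigr).
\]

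Substituting the two trace-invariance identities together with $y\ast\widehat{x}=\alpha y$ and its planar adjoint $y^{\ast}\ast\widehat{x}=\overline{\alpha}\,y^{\ast}$ (which follows because convolution by the self-adjoint $\widehat{x}$ is $*$-preserving), the four terms collapse to $tr(y^{\ast}y)+tr(yy^{\ast})-|\alpha|^{2}tr(y^{\ast}y)-|\alpha|^{2}tr(yy^{\ast})=0$. Faithfulness of the planar trace then forces $T=0$, proving \eqref{eq:split1}. For \eqref{eq:split2}, stack an arbitrary $z\in\mathscr{P}_{2,-}$ above both sides of \eqref{eq:split1} and close the outer bottom strands of $y$ and $\widehat{x}^{1/2}$ by a single cup; after merging $\widehat{x}^{1/2}\cdot\widehat{x}^{1/2}=\widehat{x}$, the LHS becomes $(zy)\ast\widehat{x}$ and the RHS becomes $\alpha(z\ast\widehat{x})y$. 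The main obstacle I anticipate is formulating the convolution Kadison--Schwarz inequality and the trace-invariance identities cleanly in $\mathscr{P}$ and tracking the normalization constants so that the four trace terms cancel exactly; the cancellation itself is then a bookkeeping exercise in planar tangles entirely parallel to the proof of Lemma~\ref{lem:commute}.
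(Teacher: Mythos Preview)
Your strategy matches the paper's exactly: set $T$ equal to the difference of the two sides of \eqref{eq:split1}, expand $tr(T^{*}T)$ into four closed diagrams, show they cancel, and then deduce \eqref{eq:split2} by multiplying \eqref{eq:split1} on top by the tangle carrying $z$ and a second $\widehat{x}^{1/2}$ joined by a cup.

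The one superfluous ingredient is your proposed Kadison--Schwarz step to establish the operator identities $(y^{*}y)\ast\widehat{x}=y^{*}y$ and $(yy^{*})\ast\widehat{x}=yy^{*}$. The paper never needs these. It simply records that both one-sided partial traces of $\widehat{x}$ equal $1$ (immediate from irreducibility of $\mathscr{P}$ together with $tr_{2,-}(\widehat{x})=\mu^{1/2}$). In each diagonal diagram the $\widehat{x}$ box then acquires a closed loop on one side and collapses to a through-strand, leaving $tr_{2,-}(y^{*}y)$; in each cross diagram one recognizes a sub-picture equal to $y\ast\widehat{x}$ (respectively $y^{*}\ast\widehat{x}$) and replaces it by $\alpha y$ (respectively $\overline{\alpha}y^{*}$). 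The four terms are thus $tr_{2,-}(y^{*}y)+tr_{2,-}(y^{*}y)-tr_{2,-}(y^{*}y)-tr_{2,-}(y^{*}y)=0$ with no inequality step at all. So the ``main obstacle'' you anticipate does not arise: only trace preservation of $\,\cdot\ast\widehat{x}$ is needed, not operator-level invariance of $y^{*}y$. (A small diagrammatic point: the second diagonal term is not literally $tr((yy^{*})\ast\widehat{x})$---in the actual diagram the loop sits on the \emph{left} of $\widehat{x}$ rather than the right---but it still evaluates to $tr_{2,-}(y^{*}y)$ by the other partial-trace identity, so this does not affect your conclusion.)
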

\begin{proof}
Let 
\begin{align*}
T=\raisebox{-0.8cm}{
\begin{tikzpicture}[xscale=2.2, yscale=1.2]
\draw [blue] (0,0) rectangle (0.5, 0.5);
\node at (0.25, 0.25) {$y$};
\draw (0.15, 0.5)--(0.15, 0.9) (0.15, 0)--(0.15, -0.4) (0.35, 0)--(0.35, -0.4);
\begin{scope}[shift={(0.7, 0)}]
\draw [blue] (0,0) rectangle (0.5, 0.5);
\node at (0.25, 0.25) {$\widehat{x}^{1/2}$};
\draw (0.35, 0.5)--(0.35, 0.9) (0.15, 0)--(0.15, -0.4) (0.35, 0)--(0.35, -0.4);
\end{scope}
\draw (0.35, 0.5) .. controls +(0, 0.3) and +(0, 0.3) .. (0.85, 0.5);
%\draw (0.35, 0) .. controls +(0, -0.3) and +(0, -0.3) .. (0.85, 0);
\end{tikzpicture}}
-\alpha
\raisebox{-1.1cm}{
\begin{tikzpicture}[xscale=2.2, yscale=1.2]
\draw [blue] (0,0) rectangle (0.5, 0.5);
\node at (0.25, 0.25) {$y$};
\draw   (0.15, 0.5)--(0.15, 0.8) (0.35, 0.5)--(0.35, 0.8) ;
\begin{scope}[shift={(0.3, -0.9)}]
\draw [blue] (0,0) rectangle (0.5, 0.5);
\node at (0.25, 0.25) {$\widehat{x}^{1/2}$};
\draw (0.15, 0)--(0.15, -0.3) (0.35, 0)--(0.35, -0.3);
\draw (0.15, 0.5) .. controls +(0, 0.25) and +(0, 0.25) .. (-0.25, 0.5)--(-0.25, -0.3);
\end{scope}
\draw[rounded corners] (0.35, 0)--(0.65, -0.4) (0.15, 0)--(-0.15, -0.4) -- (-0.15, -1.2) ;
%\draw (0.35, 0) .. controls +(0, -0.3) and +(0, -0.3) .. (0.85, 0);
\end{tikzpicture}}.
\end{align*}
Note that 
\begin{align*}    
    \raisebox{-0.55cm}{
\begin{tikzpicture}[scale=1.2]
\draw [blue] (0,0) rectangle (0.7, 0.5);
\node at (0.35, 0.25) {$\widehat{x}$};
\draw (-0.15, 0)--(-0.15, 0.5) (0.55, 0)--(0.55, -0.2) (0.55, 0.5)--(0.55, 0.7);
\draw (0.15, 0.5) .. controls +(0, 0.3) and +(0, 0.3) .. (-0.15, 0.5);
\draw (0.15, 0) .. controls +(0, -0.3) and +(0, -0.3) .. (-0.15, 0);
\end{tikzpicture}}= 1
\quad \text{and}\quad 
 \raisebox{-0.55cm}{
\begin{tikzpicture}[scale=1.2]
\path  (0.55, 0.5) .. controls +(0, 0.3) and +(0, 0.3) .. (0.85, 0.5) -- (0.85, 0) .. controls +(0, -0.3) and +(0, -0.3) ..  (0.55, 0) ;
\path  (-0.15, -0.3) rectangle (0.15, 0.8);
\draw (0.15, 0.8)--(0.15, -0.3);
\draw [blue, fill=white] (0,0) rectangle (0.7, 0.5);
\node at (0.35, 0.25) {$\widehat{x}$};
\draw (0.55, 0.5) .. controls +(0, 0.3) and +(0, 0.3) .. (0.85, 0.5) -- (0.85, 0) .. controls +(0, -0.3) and +(0, -0.3) ..  (0.55, 0) ;
\end{tikzpicture}}=1\;.
\end{align*}
We have
\begin{align*}
tr_{2,-}(T^*T)=& 
\raisebox{-0.6cm}{
\begin{tikzpicture}[xscale=1.8, yscale=1.2]
\draw [blue] (0,0) rectangle (0.5, 0.5);
\node at (0.25, 0.25) {$y^*y$};
%\draw (0.15, 0.5)--(0.15, 0.9) (0.15, 0)--(0.15, -0.4);
\draw (0.15, 0.5) .. controls +(0, 0.25) and +(0, 0.25) .. (-0.15, 0.5) -- (-0.15, 0) .. controls +(0, -0.25) and +(0, -0.25) ..  (0.15, 0);
\begin{scope}[shift={(0.7, 0)}]
\draw [blue] (0,0) rectangle (0.5, 0.5);
\node at (0.25, 0.25) {$\widehat{x}$};
\draw (0.35, 0.5) .. controls +(0, 0.25) and +(0, 0.25) .. (0.65, 0.5) -- (0.65, 0) .. controls +(0, -0.25) and +(0, -0.25) ..  (0.35, 0);
\end{scope}
\draw (0.35, 0.5) .. controls +(0, 0.3) and +(0, 0.3) .. (0.85, 0.5);
\draw (0.35, 0) .. controls +(0, -0.3) and +(0, -0.3) .. (0.85, 0);
\end{tikzpicture}}
+
\raisebox{-1.8cm}{
\begin{tikzpicture}[xscale=1.8, yscale=1.2]
\draw [blue] (0,0) rectangle (0.5, 0.5);
\node at (0.25, 0.25) {$y$};
\draw (0.15, 0.5)  .. controls +(0, 0.4) and +(0, 0.4) .. (-0.3, 0.5)--(-0.3, -1.8)  .. controls +(0, -0.4) and +(0, -0.4) .. (0.15, -1.8) ; 
\draw (0.35, 0.5)  .. controls +(0, 0.4) and +(0, 0.4) .. (0.9, 0.5)--(0.9, -1.8)  .. controls +(0, -0.4) and +(0, -0.4) .. (0.35, -1.8) ; 
\begin{scope}[shift={(0.3, -0.9)}]
\draw [blue] (0,0) rectangle (0.5, 0.5);
\node at (0.25, 0.25) {$\widehat{x}$};
%\draw (0.35, 0)--(0.35, -0.3);
\draw (0.15, 0.5) .. controls +(0, 0.25) and +(0, 0.25) .. (-0.25, 0.5) -- (-0.25, 0) .. controls +(0, -0.25) and +(0, -0.25) .. (0.15, 0);
\end{scope}
\draw[rounded corners] (0.35, 0)--(0.65, -0.4) (0.35, -1.3)--(0.65, -0.9) (0.15, 0)--(-0.15, -0.4) -- (-0.15, -0.9)--(0.15, -1.3);
\begin{scope}[shift={(0, -1.8)}]
\draw [blue] (0,0) rectangle (0.5, 0.5);
\node at (0.25, 0.25) {$y^*$};
\end{scope}
\end{tikzpicture}}
-\alpha
\raisebox{-1.1cm}{
\begin{tikzpicture}[xscale=1.8, yscale=1.2]
\draw [blue] (0,0) rectangle (0.5, 0.5);
\node at (0.25, 0.25) {$y$};
\draw (0.15, 0.5)  .. controls +(0, 0.4) and +(0, 0.4) .. (-0.4, 0.5)--(-0.4, -0.9)  .. controls +(0, -0.2) and +(0, -0.2) .. (-0.15, -0.9) ; 
\draw (0.35, 0.5)  .. controls +(0, 0.4) and +(0, 0.4) .. (0.9, 0.5)--(0.9, -0.9)  .. controls +(0, -0.2) and +(0, -0.2) .. (0.65, -0.9) ; 
\begin{scope}[shift={(0.3, -0.9)}]
\draw [blue] (0,0) rectangle (0.5, 0.5);
\node at (0.25, 0.25) {$\widehat{x}$};
%\draw  (0.35, 0)--(0.35, -0.3);
\draw (0.15, 0.5) .. controls +(0, 0.25) and +(0, 0.25) .. (-0.25, 0.5);
\draw (0.15, 0) .. controls +(0, -0.25) and +(0, -0.25) .. (-0.25, 0);
\end{scope}
\begin{scope}[shift={(-0.3, -0.9)}]
\draw [blue] (0,0) rectangle (0.5, 0.5);
\node at (0.25, 0.25) {$y^*$};
%\draw (0.15, 0)--(0.15, -0.3) ;
%\draw (0.15, 0.5) .. controls +(0, 0.25) and +(0, 0.25) .. (-0.25, 0.5)--(-0.25, -0.3);
\end{scope}
\draw[rounded corners] (0.35, 0)--(0.65, -0.4) (0.15, 0)--(-0.15, -0.4) ;
%\draw (0.35, 0) .. controls +(0, -0.3) and +(0, -0.3) .. (0.85, 0);
\end{tikzpicture}}
-\overline{\alpha}
\raisebox{-1.1cm}{
\begin{tikzpicture}[xscale=1.8, yscale=1.2]
\begin{scope}[shift={(0, -1.8)}]
\draw [blue] (0,0) rectangle (0.5, 0.5);
\node at (0.25, 0.25) {$y^*$};
\draw[rounded corners] (0.35, 0.5)--(0.65, 0.9) (0.15, 0.5)--(-0.15, 0.9) ;
\end{scope}
\draw (0.15, -1.8)  .. controls +(0, -0.4) and +(0, -0.4) .. (-0.4, -1.8)--(-0.4, -0.4)  .. controls +(0, 0.2) and +(0, 0.2) .. (-0.15, -0.4) ; 
\draw (0.35, -1.8)  .. controls +(0, -0.4) and +(0, -0.4) .. (0.9, -1.8)--(0.9, -0.4)  .. controls +(0, 0.2) and +(0, 0.2) .. (0.65, -0.4) ; 
\begin{scope}[shift={(0.3, -0.9)}]
\draw [blue] (0,0) rectangle (0.5, 0.5);
\node at (0.25, 0.25) {$\widehat{x}$};
%\draw  (0.35, 0)--(0.35, -0.3);
\draw (0.15, 0.5) .. controls +(0, 0.25) and +(0, 0.25) .. (-0.25, 0.5);
\draw (0.15, 0) .. controls +(0, -0.25) and +(0, -0.25) .. (-0.25, 0);
\end{scope}
\begin{scope}[shift={(-0.3, -0.9)}]
\draw [blue] (0,0) rectangle (0.5, 0.5);
\node at (0.25, 0.25) {$y$};
%\draw (0.15, 0)--(0.15, -0.3) ;
%\draw (0.15, 0.5) .. controls +(0, 0.25) and +(0, 0.25) .. (-0.25, 0.5)--(-0.25, -0.3);
\end{scope}
%\draw (0.35, 0) .. controls +(0, -0.3) and +(0, -0.3) .. (0.85, 0);
\end{tikzpicture}} \\
= & 
tr_{2,-}(y^*y)
+ tr_{2,-}(y^*y)-tr_{2,-}(y^*y)-tr_{2,-}(y^*y)\\
= & 0.
\end{align*}
We then obtain that $T=0$.
Multiplying the following element to Equation (\ref{eq:split1})
\begin{align*}
\raisebox{-0.8cm}{
\begin{tikzpicture}[xscale=2.2, yscale=1.2]
\draw [blue] (0,0) rectangle (0.5, 0.5);
\node at (0.25, 0.25) {$z$};
\draw (0.15, 0)--(0.15, -0.4) (0.15, 0.5)--(0.15, 0.9) (0.35, 0.5)--(0.35, 0.9);
\begin{scope}[shift={(0.7, 0)}]
\draw [blue] (0,0) rectangle (0.5, 0.5);
\node at (0.25, 0.25) {$\widehat{x}^{1/2}$};
\draw (0.35, 0)--(0.35, -0.4) (0.15, 0.5)--(0.15, 0.9) (0.35, 0.5)--(0.35, 0.9);
\end{scope}
%\draw (0.35, 0.5) .. controls +(0, 0.3) and +(0, 0.3) .. (0.85, 0.5);
\draw (0.35, 0) .. controls +(0, -0.3) and +(0, -0.3) .. (0.85, 0);
\end{tikzpicture}}
\end{align*}
we obtain Equation (\ref{eq:split2}).
This completes the proof of the lemma.
\end{proof}

\begin{theorem}\label{thm:Frobenius irreducible}
Suppose $\mathscr{P}$ is an irreducible subfactor planar algebra.
Let $x\in\mathscr{P}_{2,+}$ be an $\mathfrak{F}$-positive element  with the spectral radius $r=1$. 
Suppose that $\lambda_j\in\sigma(x)$  with $|\lambda_j|=1$ and $q_j\in\mathscr{P}_{2,+}$ is the spectral projection corresponding to $\lambda_j$ for $1\leq j\leq m$.
Assume that $\lambda_1=1$.
Then we have the following statements.
\begin{enumerate}[(i)]
\item $\|x\|_{\infty}=tr_{2,-}(\widehat{x})/\mu^{1/2}=1$.
 \item The set $\Gamma:=\{\lambda_j\}_{j=1}^m$ forms a finite cyclic group of order $m$ under the multiplication of complex numbers.
    \item $q_1\in\mathscr{P}_{2,+}$ is a biprojection. Moreover,
    \begin{align*}
q_1=\lim_{n\to\infty}\frac{1}{n}\sum_{k=1}^{n}x^k.
    \end{align*}
    \item $q_j$ is a right shift of the biprojection $q_1$.
    \item $\displaystyle \sum_{j=1}^m q_j$ is a biprojection.
    Moreover, 
    \begin{align*}
        \displaystyle \sum_{j=1}^m q_j=\lim_{n\to\infty}\frac{1}{n}\sum_{k=1}^n(xx^*)^k.
    \end{align*}
    \item $\{q_{j}\}_{j=1}^m$ forms a finite cyclic group under convolution up to a fixed scalar.
\end{enumerate}
\end{theorem}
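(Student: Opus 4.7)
The plan is to reduce to the bimodule-quantum-channel theorem, Theorem~\ref{thm:entry}. Set $\Phi := \Theta_x \in \CP$, which is completely positive by Proposition~\ref{prop:cpmapeq} since $x$ is $\mathfrak{F}$-positive; by Lemma~\ref{lem:spectrum equal}, $\sigma(\Phi) = \sigma(x)$ with matching spectral projections, so $\Gamma = \sigma(\Phi)\cap U(1)$ and each $q_j$ is the spectral projection of $y_\Phi = x$ in $\mathcal{N}'\cap\mathcal{M}_1 \cong \mathscr{P}_{2,+}$. Because $\mathscr{P}$ is irreducible, $\mathcal{N}'\cap\mathcal{M} = \mathbb{C}\mathbf{1}$, so $\Phi(\mathbf{1})$ is a nonnegative scalar which by standard CP theory equals $r(\Phi) = r(x) = 1$; hence $\Phi$ is a bimodule quantum channel. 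Taking the trace of $\Phi(\mathbf{1}) = \mu E_\mathcal{M}(y_\Phi e_1) = \mathbf{1}$ yields $tr_{2,-}(\widehat{x}) = \mu^{1/2}$, and $\|x\|_\infty = 1$ follows by combining $r(x) \leq \|x\|_\infty$ with the estimate $\|x\|_\infty \leq tr_{2,-}(\widehat{x})/\mu^{1/2}$ coming from $\widehat{x}\geq 0$ and unitality. This is precisely the normalization required to invoke Lemma~\ref{lem:split}.

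\textbf{Parts (ii)--(iv).} With $\Phi$ now a bimodule quantum channel, Theorem~\ref{thm:entry} applies (the required faithful invariant state being supplied by Lemma~\ref{lem:faithful state}): $\Gamma = \{\lambda_j\}$ is a finite cyclic subgroup of $U(1)$, $\mathcal{P} := \mathcal{M}(\Phi,1)$ is an intermediate factor, and each eigenspace is an invertible bimodule $\mathcal{M}(\Phi,\lambda_j) = u_j\mathcal{P} = \mathcal{P}u_j$. Under $\mathcal{N}'\cap\mathcal{M}_1 \cong \mathscr{P}_{2,+}$, the orthogonal projection onto $\mathcal{P}$ is exactly $q_1$, which is therefore a biprojection; the Cesàro limit formula follows from the mean ergodic theorem applied to the contractive action by $x$, whose $1$-eigenspace is the range of $q_1$. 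For (iv), the bimodule identity $\mathcal{P}\cdot u_j \mathcal{P} = u_j\mathcal{P}$ dualizes under the Fourier transform to $q_1 * q_j = \frac{tr_2(q_1)}{\sqrt{\mu}} q_j$, exhibiting $q_j$ as a right shift of $q_1$.

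\textbf{Parts (v) and (vi).} Feeding the spectral relation $x q_j = \lambda_j q_j$ into Lemma~\ref{lem:split} (with the Fourier dual of $q_j$ playing the role of $y$) yields after a short calculation the convolution identity $q_j * q_k = \frac{tr_2(q_1)}{\sqrt{\mu}} q_\ell$, where $\ell$ is determined by $\lambda_j\lambda_k = \lambda_\ell$; this proves (vi). Specializing to the inverse $\lambda_k = \overline{\lambda_j}$ gives $q_j * q_j^* = \frac{tr_2(q_1)}{\sqrt{\mu}} q_1$, so $Q := \sum_j q_j$ satisfies $Q * Q = m\cdot\frac{tr_2(q_1)}{\sqrt{\mu}} Q$. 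Each $q_j$ is $\mathfrak{F}$-positive as a right shift of the $\mathfrak{F}$-positive biprojection $q_1$, hence $Q$ is $\mathfrak{F}$-positive, and Proposition~\ref{prop:biprojection} forces $Q$ to be a biprojection. The Cesàro formula for $Q$ is the mean ergodic theorem applied to $xx^*$: by Part (i), $\|xx^*\| = 1$, and by Lemma~\ref{lem:split} the $1$-eigenspace of $xx^*$ is exactly the linear span of $\{q_j\}_j$.

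\textbf{Main obstacle.} The principal technical point is extracting the clean convolution identity $q_j * q_k = \frac{tr_2(q_1)}{\sqrt{\mu}} q_\ell$ from Lemma~\ref{lem:split}. This requires recasting the spectral relation $x q_j = \lambda_j q_j$ into the splitting hypothesis in the correct dual form, then verifying that the resulting element is a scalar multiple of the spectral projection $q_\ell$ with the precise normalization dictated by Part (i). Once this dictionary between spectral projections on the bimodule side and shifts of biprojections on the planar-algebra side is in place, every claim of the theorem falls out either directly from Theorem~\ref{thm:entry} or from Proposition~\ref{prop:biprojection} combined with the mean ergodic theorem.
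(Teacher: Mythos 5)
Your reduction of (i)--(iii) to the channel picture is workable in outline (and your argument for (i) is essentially the paper's), but the proposal leaves the actual content of (iv)--(vi) unproved, and one step is wrong. Applying Lemma \ref{lem:split} to the eigenrelation only yields $x\,(q_k\ast q_j)=\lambda_k\lambda_j\,(q_k\ast q_j)$, i.e.\ that $q_k\ast q_j$ lies in the $\lambda_k\lambda_j$-eigenspace; it does \emph{not} yield your claimed identity $q_k\ast q_j=\frac{tr_{2,+}(q_1)}{\mu^{1/2}}q_\ell$, and the statement that $\mathcal{P}u_j\mathcal{P}=u_j\mathcal{P}$ ``dualizes under the Fourier transform'' to $q_1\ast q_j=\frac{tr_{2,+}(q_1)}{\mu^{1/2}}q_j$ is asserted, not derived. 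Upgrading the eigenrelation to the precise convolution identity is exactly where the paper's quantum Fourier analysis enters: positivity of $q_1\ast q_j$ (quantum Schur product theorem), the sum set estimate of Proposition \ref{prop:Sum set estimate} together with its equality criterion, and multiplicativity of $tr_{2,+}$ under convolution, which force $\frac{\mu^{1/2}}{tr_{2,+}(q_1)}q_1\ast q_j$ to be a projection equal to $q_j$ and, via $q_{-j}\ast q_j$, give $tr_{2,+}(q_j)=tr_{2,+}(q_1)$. None of this appears in your proposal; you name it as the ``main obstacle'' but do not resolve it, so (iv) and (vi) are not established. In (v), the claim that each $q_j$ is $\mathfrak{F}$-positive ``as a right shift of the $\mathfrak{F}$-positive biprojection $q_1$'' is false: an $\mathfrak{F}$-positive projection is a biprojection by Proposition \ref{prop:biprojection}, and a biprojection dominates $e_1\leq q_1$, which is impossible for $q_j\perp q_1$, $j\neq 1$ (in the group case, the indicator of a nontrivial coset is a right shift of the subgroup biprojection but is not $\mathfrak{F}$-positive). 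The correct route, as in the paper, is that the sum $\sum_j q_j$ is a Ces\`aro limit of the $\mathfrak{F}$-positive elements $x^{km}$, hence $\mathfrak{F}$-positive by the Schur product theorem, and only then a biprojection.

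Two further points about the reduction itself. First, the theorem concerns an abstract irreducible subfactor planar algebra, so writing $x=y_\Phi$ for a bimodule quantum channel on an irreducible subfactor presupposes realizing $\mathscr{P}$ as the planar algebra of such a subfactor; that is a deep external input (subfactor reconstruction) which this section is explicitly designed to avoid, and it should at least be acknowledged --- the paper instead proves (ii)--(vi) intrinsically from Lemma \ref{lem:split}, Propositions \ref{prop:Sum set estimate} and \ref{prop:biprojection}. Second, Lemma \ref{lem:faithful state} cannot supply the invariant state, since $\Theta_x$ need not be relative irreducible; the fix is that for an irreducible subfactor $\Phi^*(\mathbf{1})\in\mathcal{N}'\cap\mathcal{M}=\mathbb{C}\mathbf{1}$, so the trace itself is $\Phi$-invariant, as in the corollary following Theorem \ref{thm:entry}. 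Likewise, ``the projection onto $\mathcal{P}$ is a biprojection'' silently invokes Bisch's intermediate-subfactor theorem, whereas the paper obtains (iii) directly from the Ces\`aro formula, the Schur product theorem and Proposition \ref{prop:biprojection}. As written, parts (iv)--(vi) have genuine gaps and part (v) contains an incorrect positivity claim.
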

\begin{proof}
(i) Since $\mathscr{P}$ is irreducible,
\begin{align}\label{eq:irreducible element}
\raisebox{-0.55cm}{
\begin{tikzpicture}[rotate=180,scale=1.2]
\path  (0.55, 0.5) .. controls +(0, 0.3) and +(0, 0.3) .. (0.85, 0.5) -- (0.85, 0) .. controls +(0, -0.3) and +(0, -0.3) ..  (0.55, 0) ;
\path  (-0.15, -0.3) rectangle (0.15, 0.8);
\draw (0.15, 0.8)--(0.15, -0.3);
\draw [blue, fill=white] (0,0) rectangle (0.7, 0.5);
\node at (0.35, 0.25) {$\widehat{x}$};
\draw (0.55, 0.5) .. controls +(0, 0.3) and +(0, 0.3) .. (0.85, 0.5) -- (0.85, 0) .. controls +(0, -0.3) and +(0, -0.3) ..  (0.55, 0) ;
\end{tikzpicture}}=
\iffalse\raisebox{-0.55cm}{
\begin{tikzpicture}[scale=1.2]
\path  (0.55, 0.5) .. controls +(0, 0.3) and +(0, 0.3) .. (0.85, 0.5) -- (0.85, 0) .. controls +(0, -0.3) and +(0, -0.3) ..  (0.55, 0) ;
\path  (-0.15, -0.3) rectangle (0.15, 0.8);
\draw (0.15, 0.8)--(0.15, -0.3);
\draw [blue, fill=white] (0,0) rectangle (0.7, 0.5);
\node at (0.35, 0.25) {$\widehat{x}$};
\draw (0.55, 0.5) .. controls +(0, 0.3) and +(0, 0.3) .. (0.85, 0.5) -- (0.85, 0) .. controls +(0, -0.3) and +(0, -0.3) ..  (0.55, 0) ;
\end{tikzpicture}}= \fi
\frac{tr_{2,-}(\widehat{x})}{\mu^{1/2}}
\raisebox{-0.55cm}{
\begin{tikzpicture}[scale=1.2]
\draw (0.15, 0.8)--(0.15, -0.3);
\end{tikzpicture}}\;.
\end{align}
Thus, $tr_{2,-}(\widehat{x})/\mu^{1/2}\leq 1$.
On the other hand, the Hausdorff-Young inequality \cite[Theorem 4.8]{JLW16} assures that $$1\leq\|x\|_{\infty}=\|\mathfrak{F}^{-1}(\widehat{x})\|_{\infty}\leq \|\widehat{x}\|_1/\mu^{1/2}=tr_{2,-}(\widehat{x})/\mu^{1/2}.$$
Therefore,
$$\|x\|_{\infty}=tr_{2,-}(\widehat{x})/\mu^{1/2}=1.$$
(ii) Since $r=\|x\|_{\infty}=1$, we have $q_jx=xq_j=\lambda_j q_j$. 
Applying Lemma \ref{lem:split} to $\mathfrak{F}^{-2}(x)$ and taking $y=\mathfrak{F}^{-1}(q_j)$ and $z=\mathfrak{F}^{-1}(q_k)$, we have
  \begin{align*}
    [\mathfrak{F}^{-1}(q_k)  \mathfrak{F}^{-1}(q_j) ]\ast \mathfrak{F}^{-1}(x)=\lambda_j\lambda_k  \mathfrak{F}^{-1}(q_k)  \mathfrak{F}^{-1}(q_j).   
  \end{align*}
Taking the Fourier transform, we have
\begin{align*}
x (q_k\ast q_j)=\lambda_k\lambda_j (q_k\ast q_j).
\end{align*}
  Since $q_k\ast q_j\neq0$, then $\lambda_k\lambda_j\in\Gamma$.
  On the other hand,
  \begin{align*}
xJq_jJ=\overline{\lambda_j}Jq_jJ,
  \end{align*}
  which implies $\lambda_j^{-1}=\overline{\lambda_j}\in\Gamma$.
  Therefore, $\Gamma$ is a finite subgroup of the unit circle. 
  Then we see that $\Gamma$ is a finite cyclic group.

(iii) Let $\displaystyle x=\sum_{j=1}^m\lambda_j q_j+x'$ such that $q_jx'=x'q_j=0$ and $r(x')<1$.
From (ii), we assume $\lambda_j={\rm e}^{2\pi i(j-1)/m}$, $j=1,\ldots,m$.
It follows that
\begin{align*}
\lim_{n\to\infty}\frac{1}{n}\sum_{k=1}^{n}x^k=\lim_{n\to\infty}\frac{1}{n}\sum_{k=1}^{n}\sum_{j=1}^m\lambda_j^kq_j+\lim_{n\to\infty}\frac{1}{n}\sum_{k=1}^{n}x'^k=q_1.
\end{align*}
   Moreover, the Fourier transform of $q_1$,
\begin{align*}
\widehat{q_1}=\lim_{n\to\infty}\frac{1}{n}\sum_{k=1}^{n}\widehat{x}^{\ast k}\geq0,
\end{align*}
by the quantum Schur product theorem \cite[Theorem 4.1]{Liu16}.
  Consequently, $q_1$ is a biprojection by Proposition \ref{prop:biprojection}.

  (iv)  From (ii), we have
\begin{align*}
    x (q_1\ast q_j)=\lambda_1\lambda_j (q_1\ast q_j)=\lambda_j (q_1\ast q_j).
\end{align*}
Therefore, $\mathcal{R}(q_1\ast q_j)\leq q_j$ and $tr_{2,+}(\mathcal{R}(q_1\ast q_j))\leq tr_{2,+}(q_j)$.
By Proposition \ref{prop:Sum set estimate}, we have 
\begin{align*}
tr_{2,+}(q_1)\leq tr_{2,+}(\mathcal{R}(q_1\ast q_j))= tr_{2,+}(q_j)
\end{align*}
and $\displaystyle \frac{\mu^{1/2}}{tr_{2,+}(q_1)}q_1\ast q_j$ is a projection.
Hence $\displaystyle \frac{\mu^{1/2}}{tr_{2,+}(q_1)}q_1\ast q_j=q_j$.
Let $q_{-j}$ be the spectral projection corresponding to $\lambda_j^{-1}$.
Then 
\begin{align*}
    x (q_{-j}\ast q_j)= q_{-j}\ast q_j.
\end{align*}
Therefore, $\mathcal{R}(q_{-j}\ast q_j)\leq q_1$ and $tr_{2,+}(\mathcal{R}(q_{-j}\ast q_j))\leq tr_{2,+}(q_1)$.
Again by Proposition \ref{prop:Sum set estimate}, we have $tr_{2,+}(q_j)\leq tr_{2,+}(q_1)$.
Thus, $tr_{2,+}(q_j)= tr_{2,+}(q_1)$.
We obtain that $q_j$ is a right shift of $q_1$ for  $1\leq j \leq m$.

(v) For any $1\leq j\leq m$, we have
\begin{align*}
    q_j=\lim_{n\to\infty}\frac{1}{n}\sum_{k=1}^{n}(\lambda_j^{-1}x)^k.
\end{align*}
Then
\begin{align*}
\sum_{j=1}^m q_j=\lim_{n\to\infty}\frac{1}{n}\sum_{k=1}^{n}\sum_{j=1}^m (\lambda_j^{-1}x)^k=
\lim_{n\to\infty}\frac{m}{n}\sum_{k=1}^{n}x^{km}.
\end{align*}
Therefore, the projection $\displaystyle \sum_{j=1}^m q_j$ is $\mathfrak{F}$-positive. 
By Proposition \ref{prop:biprojection}, we obtain that $\displaystyle \sum_{j=1}^m q_j$ is a biprojection.
Moreover, by the above discussion,
\begin{align*}
    \lim_{n\to\infty}\frac{1}{n}\sum_{k=1}^n(xx^*)^k= \sum_{j=1}^m q_j+  \lim_{n\to\infty}\frac{1}{n}\sum_{k=1}^n(x'x'^*)^k= \sum_{j=1}^m q_j.
\end{align*}
(vi) By the above discussions, one can check that $\displaystyle \frac{\mu^{1/2}}{tr_{2,+}(q_1)}q_k\ast q_j=q_{k+j-1}$.
The proof is completed.
\end{proof}

\begin{remark}
    Suppose $\mathcal{N}\subseteq \mathcal{M}$ is an irreducible type II$_1$ subfactor and $\Phi\in \CP$ such that $\widehat{\Phi}$ is a trace-one projection.
    Then $y_{\Phi}$ is a multiple of a unitary element in $\mathcal{N}'\cap \mathcal{M}$.
    By Theorem \ref{thm:Frobenius irreducible}, there exists a biprojection $q_1\in \mathcal{N}'\cap \mathcal{M}$ and right shifts $q_2, \ldots, q_m$ of the biprojection $q_1$ such that $\displaystyle y_{\Phi}=\mu^{-1}\sum_{j=1}^m \lambda_0^{j-1} q_j$, where $\lambda_0$ is an $m$-th root of unity.
    The phase group $\Gamma$ of $\Phi$ is inherited in $y_{\Phi}$.
    The unitary elements obtained in the statement $(v)$ of Theorem \ref{thm:entry} corresponding to $\lambda_0$ produce the characters of the phase group $\Gamma$ by considering its spectral decomposition.
    This implies Proposition 1.7 in \cite{PimPop86}.
    In this sense, Theorem \ref{thm:entry} and Theorem \ref{thm:Frobenius irreducible} extend it dramatically.
\end{remark}

\begin{remark}
Suppose $\mathcal{N}\subseteq \mathcal{M}$ is an irreducible type II$_1$ subfactor and $\Phi\in \CP$ is a bimodule quantum channel.
    Let $\mathcal{P}$ be the eigenspace of $\Phi$ corresponding to the eigenvalue $1$. We see that $\mathcal{P}$ is an intermediate subfactor and $\Phi$ is relatively irreducible as a $\mathcal{P}$-$\mathcal{P}$-bimodule quantum channel.
    Suppose that $p\in \mathcal{M}$ is a projection such that $\Phi(p)\leq cp$ for some $c>0$.
    By the fact that $\displaystyle E_{\mathcal{P}}=\lim_{n\to\infty} \frac{1}{n}\sum_{k=1}^n\Phi^k$, we see that $E_{\mathcal{P}}(p)\leq c_1 p$ for some $c_1>0$.
    Then by the Pimsner-Popa inequality, we see that $\mathcal{R}(E_{\mathcal{P}}(p))=p$, i.e., $p\in \mathcal{P}$.
    This implies that $\Phi$ is relatively irreducible as a $\mathcal{P}$-$\mathcal{P}$-bimodule quantum channel.
\end{remark}

We recall from \cite{Liu16} that given a positive element $y\in\mathscr{P}_{2,-}$ the biprojection $q$ generated by $y$ is the range projection of $\displaystyle \sum_{i=1}^ky^{\ast i}$ for $k$ large enough.
Let $p$ be the biprojection generated by $y\ast \overline{y}$.
From Theorem \ref{thm:Frobenius irreducible}, we can obtain the precise relation between $p$ and $q$ as follows.
\begin{theorem}\label{thm:two biprojection}
Suppose $\mathscr{P}$ is an irreducible subfactor planar algebra and  $y\in\mathscr{P}_{2,+}$ is a positive element.
Let $q$ and $p$ be the biprojections generated by $y$ and $y\ast \overline{y}$ respectively.
 Then $\displaystyle q=\sum_{j=1}^m p_{j}$, where $p_{j}$ is a right shift of the biprojection $p$.
  Moreover $\{p_j\}_{j=1}^m$ forms a finite cyclic group under convolution.
\end{theorem}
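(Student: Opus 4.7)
The plan is to derive Theorem \ref{thm:two biprojection} from Theorem \ref{thm:Frobenius irreducible} by passing to the Fourier dual of $y$, where convolution powers of $y$ correspond to multiplicative powers of its Fourier transform.

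Set $z := \mathfrak{F}(y) \in \mathscr{P}_{2,-}$. Since $\mathfrak{F}^{-1}(z) = y \geq 0$, the element $z$ is ``$\mathfrak{F}^{-1}$-positive,'' the $\mathscr{P}_{2,-}$-analog of $\mathfrak{F}$-positivity. Under Fourier duality, convolution in $\mathscr{P}_{2,+}$ corresponds to multiplication in $\mathscr{P}_{2,-}$: the convolution powers $y^{*k}$ correspond to the multiplicative powers $z^k$, while (using the defining convention $\mathfrak{F}(\overline{y}) = \mathfrak{F}(y)^*$) $\mathfrak{F}(y * \overline{y}) = z^* z$ and $\mathfrak{F}(\overline{y} * y) = z z^*$. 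After rescaling $y$ by a positive scalar so that $\|z\|_\infty = 1$, which affects neither $p$ nor $q$, I would apply the $\mathscr{P}_{2,-}$-mirror of Theorem \ref{thm:Frobenius irreducible}, which is immediate from the symmetry exchanging $\mathscr{P}_{2,\pm}$ and $\mathfrak{F}^{\pm 1}$, to $z$. This yields a finite cyclic subgroup $\Gamma = \{\lambda_1,\ldots,\lambda_m\}$ of the unit circle, spectral projections $Q_1,\ldots,Q_m \in \mathscr{P}_{2,-}$ of $z$ at $\lambda_j$, each $Q_j$ a right shift of $Q_1$, and the identifications
\[
Q_1 = \lim_{n\to\infty}\frac{1}{n}\sum_{k=1}^n z^k, \qquad \sum_{j=1}^m Q_j = \lim_{n\to\infty}\frac{1}{n}\sum_{k=1}^n (zz^*)^k,
\]
both biprojections, with $\{Q_j\}$ a finite cyclic group under convolution in $\mathscr{P}_{2,-}$.

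Next, I define $p_j := \mathfrak{F}^{-1}(Q_j) \in \mathscr{P}_{2,+}$. Because Fourier inversion is an algebra isomorphism from $(\mathscr{P}_{2,-},\cdot)$ to $(\mathscr{P}_{2,+},*)$, preserves biprojections, and transports right shifts to right shifts, each $p_j$ is a biprojection in $\mathscr{P}_{2,+}$, a right shift of $p_1$, and $\{p_j\}_{j=1}^m$ forms a finite cyclic group under convolution. To conclude, I must verify the identifications $p_1 = p$ and $\sum_{j=1}^m p_j = q$. The key point is that $p_j$, being the Fourier inverse of the spectral projection of $z$ at $\lambda_j$, is precisely the spectral projection of the convolution operator $L_y : w \mapsto y * w$ at the eigenvalue $\lambda_j$; by analogy with Pontryagin duality (characters of the phase quotient correspond to cosets), these spectral projections coincide with the $m$ cosets of the biprojection $p$ generated by $y * \overline{y}$ inside the biprojection $q$ generated by $y$, giving $p_1 = p$ and $\sum_j p_j = q$.

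The main obstacle is the last step: bridging the two different descriptions of biprojections --- the Cesaro-mean description coming from Theorem \ref{thm:Frobenius irreducible} and the range-projection-of-a-sum description of the theorem statement, which follows \cite{Liu16}. Concretely, one must check that under the spectral-radius-1 normalization, the Cesaro mean of convolution powers of the relevant positive element (translated via Fourier) coincides with the biprojection it generates, and that the orientation discrepancy between $zz^*$ and $z^*z$ (corresponding to $\overline y * y$ versus $y * \overline y$) does not obstruct the identification of $p_1$ with $p$. This is a quantum Fourier analysis matter: it follows from the spectral decomposition of the convolution operator together with the characterization of generated biprojections in \cite{Liu16}.
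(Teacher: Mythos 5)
Your opening move is the same as the paper's: realize $y$ as the Fourier transform of an $\mathfrak{F}$-positive element (your $z$, the paper's $x$), normalize, and apply Theorem \ref{thm:Frobenius irreducible} (or its mirror) to get the phase group and the spectral projections $Q_1,\dots,Q_m$. The proof breaks at the transport step. The Fourier transform does \emph{not} ``preserve biprojections and transport right shifts to right shifts'' termwise: it sends a biprojection to a positive multiple of the \emph{dual} biprojection, reversing inclusions, and for $j\neq 1$ the element $\mathfrak{F}^{-1}(Q_j)$ is in general neither positive nor a projection (in the group model it is a character-twisted multiple of the dual biprojection, not the indicator of a coset), so it cannot be a right shift of anything. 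Because of this order reversal your final identifications come out backwards: one has $\mathcal{R}(\mathfrak{F}^{-1}(Q_1))=q$, the biprojection generated by $y$, and $\mathcal{R}(\mathfrak{F}^{-1}(\sum_{j}Q_j))=p$, the biprojection generated by $y\ast\overline{y}$ --- the opposite of your claims $p_1=p$ and $\sum_j p_j=q$, and internally inconsistent with them since your $\sum_j p_j$ is exactly $\mathfrak{F}^{-1}(\sum_j Q_j)$. Likewise, the $Q_j$ form a cyclic group under \emph{convolution} in $\mathscr{P}_{2,-}$ but are orthogonal under multiplication, so pushing them through the isomorphism $(\mathscr{P}_{2,-},\cdot)\to(\mathscr{P}_{2,+},\ast)$ does not give a convolution group either. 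The ``main obstacle'' you flag at the end is thus not a technical verification but the actual gap, and invoking the spectral decomposition of $L_y$ together with \cite{Liu16} does not repair it, because the elements $p_j:=\mathfrak{F}^{-1}(Q_j)$ are the wrong objects.

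The repair is the character-sum construction, which is what the paper does: with $q_k$ the spectral projections of the $\mathfrak{F}$-positive preimage of $y$ at the phases $\lambda_k=e^{2\pi i(k-1)/m}$, define $p_j=\frac{\mu^{1/2}}{tr_2(q_1)\,m}\sum_{k=1}^m e^{2\pi i j(k-1)/m}\,\widehat{q_k}$. These discrete Fourier combinations of the $\widehat{q_k}$ --- not the individual $\widehat{q_k}$ --- are the projections that are right shifts of $p=\mathcal{R}(\sum_k\widehat{q_k})$, sum to $q=\mathcal{R}(\widehat{q_1})$, and form a finite cyclic group under convolution; all of this is checked from items (iii)--(vi) of Theorem \ref{thm:Frobenius irreducible}. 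In the abelian-group model this is exactly Pontryagin duality implemented correctly: cosets are recovered from the dual eigenprojections by summing against characters, whereas a single dual eigenprojection Fourier-inverts to a character-twisted copy of the big subgroup, which is what your construction produced.
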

\begin{proof}
    Let $x\in\mathscr{P}_{2,+}$ be an $\mathfrak{F}$-positive element such that $\widehat{x}=y$.
    Now let $q$ and $p$ be the range projections of $\widehat{q}_1$ and $\displaystyle \sum_{j=1}^m\widehat{q_j}$ respectively.
    We define $\displaystyle p_j=\dfrac{\mu^{1/2}}{tr_2(q_1)m}\sum_{k=1}^m {\rm e}^{\frac{2\pi ij(k-1)}{m}}\widehat{q_k}$, $j=1,\ldots,m$.
    One can check that the results hold by Theorem \ref{thm:Frobenius irreducible}.
\end{proof}
%\begin{theorem}\label{thm:two biprojection}
%Suppose $\mathscr{P}$ is an irreducible subfactor planar algebra with index $\mu$.
%Let $x\in\mathscr{P}_{2,+}$ be an $\mathfrak{F}$-positive element with the same properties as Theorem \ref{thm:Frobenius irreducible}.   
%Then the range projection of $\widehat{p_1}$ is the biprojection generated by $\widehat{x}$ and the range projection of $\widehat{\sum_{j=1}^m q_j}$ is the biprojection generated by $\widehat{x}\ast\overline{\widehat{x}}$.
%\end{theorem}

\section{$\lambda$-extension}\label{sec:extension}
In this section, we will generalize the main results in \S \ref{sec:PF theory for CPB} to inclusion of von Neumann algebras in terms of $\lambda$-extension.
Let $\mathcal{N}\subseteq\mathcal{M}$ be an inclusion of finite von Neumann algebras.
Let $\tau_{\mathcal{M}}(=\tau)$ be a fixed normal faithful tracial state on $\mathcal{M}$.
Let $\mathcal{M}_1=\langle \mathcal{M}, e_1\rangle$ be the von Neumann algebra acting on $L^2(\mathcal{M})$ generated by $\mathcal{M}$ and $e_1$.
The inclusion $\mathcal{N}\subseteq \mathcal{M}$ is called finite-index if $\mathcal{M}_1$ is a finite von Neumann algebra.

We always assume that $\mathcal{M}$ is a $\lambda$-extension of $\mathcal{N}$, i.e., there exists a normal faithful tracial state $\tau_{1}$ on $\mathcal{M}_1$ such that $\tau_1|_{\mathcal{M}}=\tau$ and $E_{\mathcal{M}}(e_1)=\lambda \mathbf{1}$ for some positive constant $\lambda$, where $E_{\mathcal{M}}$ is the $\tau_1$-preserving conditional expectation onto $\mathcal{M}$.
In this case, there exists a Pimsner-Popa basis $\displaystyle \{\eta_j\}_{j=1}^m$ of $\mathcal{N}\subseteq \mathcal{M}$, namely
\begin{align*}
x=\sum_{j=1}^m  E_{\mathcal{N}}(x\eta_j^*)\eta_j, \quad \text{for all } x\in\mathcal{M}.
\end{align*}
Equivalently, $\displaystyle x=\sum_{j=1}^m \eta_j^* E_{\mathcal{N}}(\eta_j x)$.
We also assume that $\mathcal{M}_2$ is a $\lambda$-extension of $\mathcal{M}_1$.

The index of $\mathcal{N}\subseteq\mathcal{M}$ is $\tau_1(e_1)^{-1}= \displaystyle \sum_{j=1}^m\tau(\eta_j\eta_j^*)=\lambda^{-1}$.
The Pimsner-Popa inequality reads
\begin{align}\label{eq:PP inequality finite vN algbera}
    E_{\mathcal{N}}(x)\geq \lambda_{\mathcal{N}\subseteq \mathcal{M}} x,\quad\text{ for all } x\in\mathcal{M}^+,
\end{align}
where $\lambda_{\mathcal{N}\subseteq \mathcal{M}}$ is the Pimsner-Popa constant.
The inclusion $\mathcal{N}\subseteq\mathcal{M}$ is finite if and only if $\lambda_{\mathcal{N}\subseteq \mathcal{M}}>0$.
When $\mathcal{N}\subseteq \mathcal{M}$ is an inclusion of type II$_1$ factors, the Pimsner-Popa constant is $\lambda$.

% We denote by $L^2(\mathcal{M})\otimes_{\mathcal{N}}L^2(\mathcal{M})$ the relative tensor product and the inner product is described as
% \begin{align*}
%     \langle x_1\Omega\otimes y_1\Omega, x_2\Omega\otimes y_2\Omega \rangle = \langle E_{\mathcal{N}}(x_2^*x_1) y_1\Omega,  y_2\Omega \rangle,
% \end{align*}
% whenever $x_1, x_2, y_1, y_2\in \mathcal{M}$.
% Denote by $\End_{\mathcal{M}}(L^2(\mathcal{M})\otimes_{\mathcal{N}}L^2(\mathcal{M}))$ the algebra of all bounded $\mathcal{M}$-$\mathcal{M}$-bimodule maps on $L^2(\mathcal{M})\otimes_{\mathcal{N}}L^2(\mathcal{M})$.

% Suppose $\Phi\in \CP$.
% We say $\Phi$ is absolutely continuous with respect to $E_{\mathcal{N}}$ if the support of $\left(\Phi(\eta_j\eta_k^*)\right)_{j, k=1}^m$ is containded in the support of $\left(E_{\mathcal{N}}(\eta_j\eta_k^*)\right)_{j, k=1}^m$, denoted by $\Phi \ll E_{\mathcal{N}}$.
By \cite{Bis97}, we shall identify $\CB$ with $\mathcal{N}'\cap \mathcal{M}_1$ and the bimodule $_{\mathcal{N}}\mathcal{M}_{\mathcal{N}}$.
For each $\Phi\in \CB$, the corresponding element $y_{\Phi}$ is defined as 
\begin{align*}
y_{\Phi} x\Omega=\Phi(x)\Omega \text{ for all } x\in \mathcal{M}.
\end{align*}
We denote by $\widehat{\Phi}\in \mathcal{M}'\cap \mathcal{M}_2$ the inverse Fourier transform of $y_{\Phi}$ and call it the Fourier multiple of $\Phi$.
%By the Hausdorff-Young inequality \cite{JLW16}, we see that $\hat{\Phi}$ is bounded.
Let $\Omega_1$ be the cyclic and separating vector in $L^2(\mathcal{M}_1)$ for $\mathcal{M}_1$.
Identifying $\widehat{\Phi}$ as an element in $\mathcal{M}'\cap \mathcal{M}_2$, we obtain that
    \begin{align}\label{eq:fouriermultiple finite  vN algebra}
        \widehat{\Phi}  x e_1 y \Omega_1 = \lambda^{-1/2}\sum_{j=1}^m x \eta_j^* e_1\Phi(\eta_j)y\Omega_1, \quad \text{ for all } x, y\in\mathcal{M}.
    \end{align}
    We have the algebraic expressions of bimodule maps as Proposition \ref{prop:phi formula} and Proposition \ref{prop:formula}.
\begin{proposition}
  Suppose $\mathcal{N}\subseteq\mathcal{M}$ is a finite inclusion of finite von Neumann algebras and $\Phi$ is a bounded bimodule map on $\mathcal{M}$. 
  Then we have 
  \begin{align*}
     \Phi(x)=\lambda^{-1} E_{\mathcal{M}}(y_{\Phi}xe_1),\quad x\in\mathcal{M}.
  \end{align*}
\end{proposition}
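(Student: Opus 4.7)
The plan is to mimic the proof of Proposition \ref{prop:phi formula} verbatim, since the only two properties of the subfactor setting that were really used there are (a) the defining identity $y_\Phi x\Omega=\Phi(x)\Omega$ and (b) the averaging identity $z\Omega=\mu E_{\mathcal{M}}(ze_1)\Omega$ for $z\in\mathcal{M}_1$, both of which survive in the $\lambda$-extension framework with $\mu$ replaced by $\lambda^{-1}$.

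First I would establish the averaging identity: for every $z\in\mathcal{M}_1$,
\begin{equation*}
z\Omega=\lambda^{-1}E_{\mathcal{M}}(ze_1)\Omega.
\end{equation*}
Since $\mathcal{M}_1=\langle\mathcal{M},e_1\rangle$ is the weak closure of sums $z=\sum_j a_j e_1 b_j$ with $a_j,b_j\in\mathcal{M}$, it suffices to check this for such elementary tensors. Using $e_1 b_j\Omega=E_{\mathcal{N}}(b_j)\Omega$ and the bimodule property of $E_{\mathcal{M}}$ together with the defining assumption $E_{\mathcal{M}}(e_1)=\lambda\mathbf{1}$ of the $\lambda$-extension, both sides equal $\sum_j a_j E_{\mathcal{N}}(b_j)\Omega$ after multiplying the right-hand side by $\lambda^{-1}\cdot\lambda$. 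Normality/continuity of $E_{\mathcal{M}}$ and of the GNS map $z\mapsto z\Omega$ then extends the identity to all of $\mathcal{M}_1$.

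Next I would apply this with $z=y_\Phi x$, which lies in $\mathcal{M}_1$ since $y_\Phi\in\mathcal{N}'\cap\mathcal{M}_1$ and $x\in\mathcal{M}\subseteq\mathcal{M}_1$. Combining with the defining relation $y_\Phi x\Omega=\Phi(x)\Omega$ gives
\begin{equation*}
\Phi(x)\Omega=y_\Phi x\Omega=\lambda^{-1}E_{\mathcal{M}}(y_\Phi x e_1)\Omega.
\end{equation*}
Finally, since $\Omega$ is separating for $\mathcal{M}$ and both $\Phi(x)$ and $\lambda^{-1}E_{\mathcal{M}}(y_\Phi x e_1)$ lie in $\mathcal{M}$, the claimed equality $\Phi(x)=\lambda^{-1}E_{\mathcal{M}}(y_\Phi x e_1)$ follows.

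There is no real obstacle: the whole proof is a two-line transcription of the subfactor argument in Proposition \ref{prop:phi formula}. The only point that needs a sentence of justification (and which plays the role of the finite-index Jones-tower input in the subfactor case) is why $E_{\mathcal{M}}(e_1)=\lambda\mathbf{1}$ is enough to run the averaging identity; this is precisely the defining assumption of a $\lambda$-extension stated at the top of this section, so nothing new has to be proved.
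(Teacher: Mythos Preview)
Your proof is correct and follows exactly the approach the paper uses for Proposition~\ref{prop:phi formula} (the paper in fact gives no separate argument here, leaving the transcription to the reader). Your added justification of the averaging identity $z\Omega=\lambda^{-1}E_{\mathcal{M}}(ze_1)\Omega$ via elementary tensors $ae_1b$ and the $\lambda$-extension hypothesis $E_{\mathcal{M}}(e_1)=\lambda\mathbf{1}$ is the natural way to unpack the one-line step the paper leaves implicit; note that in the finite-index setting the identity $\sum_j\eta_j^* e_1\eta_j=\mathbf{1}$ coming from the Pimsner--Popa basis guarantees that such sums already exhaust $\mathcal{M}_1$, so the density/normality extension is not even strictly needed.
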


\begin{proposition}\label{prop:formula finite vN algebra}
Suppose $\mathcal{N}\subseteq \mathcal{M}$ is a finite inclusion of finite von Neumann algebras and  $\Phi$ is a bounded bimodule map on $\mathcal{M}$.  
Then 
\begin{align*}
    \Phi(x)=\lambda^{-3/2} E_{\mathcal{M}}(e_2e_1\widehat{\Phi} x e_1e_2).
\end{align*}
\end{proposition}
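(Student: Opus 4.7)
The plan is to mirror the argument of Proposition \ref{prop:formula} verbatim, with the index $\mu$ replaced by $\lambda^{-1}$ throughout; the $\lambda$-extension hypothesis is precisely what guarantees that every ingredient of that proof survives. In particular, it supplies a Pimsner-Popa basis $\{\eta_j\}_{j=1}^m$ for $\mathcal{N}\subseteq\mathcal{M}$, the relation $E_{\mathcal{M}}(e_1)=\lambda\mathbf{1}$, the analogous relation $E_{\mathcal{M}_1}(e_2)=\lambda\mathbf{1}$ (via the assumed $\lambda$-extension $\mathcal{M}_1\subseteq\mathcal{M}_2$), the separating vector $\Omega_1$ for $\mathcal{M}_1$, and the defining identity \eqref{eq:fouriermultiple finite vN algebra}.

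Concretely, I would first expand, for $x\in\mathcal{M}$,
\[
\Phi(x)=\sum_{j=1}^m E_{\mathcal{N}}(x\eta_j^*)\Phi(\eta_j),
\]
multiply by $e_1$ on the left, and use $e_1E_{\mathcal{N}}(x\eta_j^*)=e_1 x\eta_j^* e_1$ to obtain
\[
e_1\Phi(x)\Omega_1=\sum_{j=1}^m e_1 x\eta_j^* e_1\Phi(\eta_j)\Omega_1.
\]
Next I would apply \eqref{eq:fouriermultiple finite vN algebra} with $y=\mathbf{1}$ to identify the right-hand side with $\lambda^{1/2}e_1\widehat{\Phi}xe_1e_2\Omega_1$. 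Since this last expression is of the form $ye_2$ with $y\in\mathcal{M}_1$, and $\Omega_1$ is separating for $\mathcal{M}_1$, the operator identity
\[
e_1\Phi(x)e_2=\lambda^{1/2}e_1\widehat{\Phi}xe_1e_2
\]
follows. Multiplying on the left by $e_2$ and using $e_2e_1e_2=\lambda e_2$ yields $\lambda e_2\Phi(x)=\lambda^{1/2}e_2e_1\widehat{\Phi}xe_1e_2$; applying $E_{\mathcal{M}}$ and invoking $E_{\mathcal{M}}(e_2)=\lambda\mathbf{1}$ on the left-hand side gives the desired formula $\Phi(x)=\lambda^{-3/2}E_{\mathcal{M}}(e_2e_1\widehat{\Phi}xe_1e_2)$.

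The main obstacle is really a bookkeeping one: confirming that the proof of Proposition \ref{prop:formula} uses only the structural consequences of the $\lambda$-extension assumption, not anything specific to the subfactor II$_1$ setting. The two genuine points to verify are (a) that the Pimsner-Popa expansion commutes correctly past $e_1$, which is automatic from $e_1y e_1=E_{\mathcal{N}}(y)e_1$ in any tracial inclusion, and (b) that $e_2e_1e_2=\lambda e_2$, which is the translation of $E_{\mathcal{M}}(e_1)=\lambda\mathbf{1}$ into a relation on Jones projections and is valid once $\mathcal{M}_2$ is a $\lambda$-extension of $\mathcal{M}_1$. Both are built into our standing hypotheses, so no additional argument is needed.
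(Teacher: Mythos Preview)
Your proposal is correct and follows exactly the argument the paper intends: Proposition~\ref{prop:formula finite vN algebra} is stated without proof in Section~\ref{sec:extension} precisely because it is the verbatim translation of the proof of Proposition~\ref{prop:formula} with $\mu$ replaced by $\lambda^{-1}$, and your write-up reproduces that translation accurately. One small clarification: the identity $e_2e_1e_2=\lambda e_2$ actually comes from $E_{\mathcal{M}}(e_1)=\lambda\mathbf{1}$ (the $\lambda$-extension hypothesis on $\mathcal{N}\subseteq\mathcal{M}$) together with the defining relation $e_2 y e_2=E_{\mathcal{M}}(y)e_2$ for $y\in\mathcal{M}_1$; the second $\lambda$-extension hypothesis $\mathcal{M}_1\subseteq\mathcal{M}_2$ is what supplies $e_2$, $E_{\mathcal{M}_1}(e_2)=\lambda\mathbf{1}$, and hence $E_{\mathcal{M}}(e_2)=\lambda\mathbf{1}$, which you use in the final step.
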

We also have the same result as Proposition \ref{prop:cpmapeq}.
\begin{proposition}\label{prop:cpmapeq finite vN algebra}
Suppose $\mathcal{N}\subseteq \mathcal{M}$ is a finite inclusion of finite von Neumann algebras and $\Phi\in \CB$.
Then $\Phi$ is completely positive if and only if $\widehat{\Phi} \geq 0$.
\end{proposition}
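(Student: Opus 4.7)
My plan is to imitate the proof of Proposition \ref{prop:cpmapeq} almost verbatim, since that argument rests only on the algebraic identity of Proposition \ref{prop:formula} and the explicit action formula \eqref{eq:fouriermultiple}, both of which have already been reproved in the present setting (with $\mu$ replaced by $\lambda^{-1}$) as Proposition \ref{prop:formula finite vN algebra} and Equation \eqref{eq:fouriermultiple finite vN algebra}. Nothing in the original proof uses factoriality or extremality, only the existence of a Pimsner-Popa basis, the Jones projection relation $e_1 a e_1 = E_{\mathcal{N}}(a) e_1$, and the identity $\tau_1(\,\cdot\, e_1) = \lambda\, \tau(E_{\mathcal{M}}(\,\cdot\,))$, all of which are available in a $\lambda$-extension.

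For the easy direction, assume $\widehat{\Phi} \geq 0$. Factor $\widehat{\Phi} = \widehat{\Phi}^{1/2} \widehat{\Phi}^{1/2}$ and plug into Proposition \ref{prop:formula finite vN algebra} to obtain
\[
\Phi(x) \;=\; \lambda^{-3/2}\, E_{\mathcal{M}}\!\left( (\widehat{\Phi}^{1/2} e_1 e_2)^{*}\, x\, (\widehat{\Phi}^{1/2} e_1 e_2) \right), \qquad x \in \mathcal{M}.
\]
Thus $\Phi$ is the composition of the completely positive map $x \mapsto a^{*} x a$, with $a := \widehat{\Phi}^{1/2} e_1 e_2 \in \mathcal{M}_2$, followed by the conditional expectation $E_{\mathcal{M}}$, and so is completely positive.

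For the converse, I would fix an arbitrary finite tuple $\{x_i, y_i\}_{i=1}^{n} \subset \mathcal{M}$ and the vector $\xi := \sum_i x_i e_1 y_i \Omega_1 \in L^2(\mathcal{M}_1)$. Applying \eqref{eq:fouriermultiple finite vN algebra} and then repeatedly collapsing the inner $e_1(\cdot)e_1$ terms via $e_1 z e_1 = E_{\mathcal{N}}(z) e_1$, the trace identity $\tau_1(\,\cdot\, e_1) = \lambda \tau(\,\cdot\,)$ (a consequence of $\tau_1\vert_{\mathcal{M}} = \tau$ and $E_{\mathcal{M}}(e_1) = \lambda \mathbf{1}$), and the Pimsner-Popa expansion $\sum_k E_{\mathcal{N}}(x_j^{*} x_i \eta_k^{*}) \eta_k = x_j^{*} x_i$, I expect the same cancellations as in the subfactor proof to reduce the inner product to
\[
\langle \widehat{\Phi} \xi, \xi \rangle \;=\; \sum_{i,j=1}^{n} \tau\!\left( \Phi(x_j^{*} x_i)\, y_i y_j^{*} \right).
\]
The right-hand side equals $(\tau \otimes \mathrm{tr}_n)\bigl( Y\, (\Phi \otimes \mathrm{id}_n)(X^{*} X)\, Y^{*} \bigr)$ for the column matrices $X = (x_i)$ and $Y = (y_i)$, which is non-negative by complete positivity of $\Phi$. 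Since vectors of the form $\sum_i x_i e_1 y_i \Omega_1$ have dense linear span in $L^2(\mathcal{M}_1)$ (a standard consequence of the Pimsner-Popa basis decomposition, which carries over to $\lambda$-extensions), this forces $\widehat{\Phi} \geq 0$.

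The main concern is bookkeeping the constant $\lambda$: the original proof used $\mu^{1/2}$, $\mu^{-1}$, $\mu^{3/2}$ at various places, and each of these must be replaced consistently, using $E_{\mathcal{M}}(e_1) = \lambda \mathbf{1}$ rather than $E_{\mathcal{M}}(e_1) = \mu^{-1}\mathbf{1}$. Since $\lambda$ is strictly positive (the inclusion is finite-index) and appears as an overall non-zero scalar on both sides of the reduced identity, the positivity statement is insensitive to the rescaling and the equivalence goes through.
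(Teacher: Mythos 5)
Your proposal is correct and follows exactly the route the paper intends: the paper proves this proposition simply by citing that the argument of Proposition \ref{prop:cpmapeq} carries over verbatim, using Proposition \ref{prop:formula finite vN algebra} for the ``if'' direction and the computation of $\langle \widehat{\Phi}\xi,\xi\rangle$ via Equation \eqref{eq:fouriermultiple finite  vN algebra} and the Pimsner--Popa expansion for the ``only if'' direction, which is precisely what you do. Your only deviations are cosmetic and harmless: you make the density of $\operatorname{span}\{x e_1 y\,\Omega_1\}$ explicit, and the reduced identity actually carries an overall factor $\lambda^{1/2}$, which, as you note, does not affect positivity.
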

\begin{remark}
Suppose that $\mathcal{N}\subseteq\mathcal{M}$ is a finite inclusion of finite von Neumann algebras and $\Phi\in\CB$.
  In \cite{Zha23}, Zhao showed that when $\mathcal{N}\subseteq \mathcal{M}$ admits a downward basic construction, the positivity of $\Phi$ implies its complete positivity.
  By considering the finite inclusion $\mathbb{C}\subseteq M_{2}(\mathbb{C})$, we see that the transpose is positive but not completely positive.
\end{remark}
\begin{remark}
Suppose that $\mathcal{N}\subseteq\mathcal{M}$ is a finite inclusion of finite von Neumann algebras with the finite index $\lambda^{-1}$ and $\Phi\in\CP$.
    In \cite{Zha23}, Zhao showed that when $\mathcal{N}\subseteq \mathcal{M}$ admits a downward basic construction $\mathcal{N}_{-1}\subseteq \mathcal{N}\subseteq ^{e_{-1}}\mathcal{M}$, $\lambda_{\mathcal{N}\subseteq \mathcal{M}}=\lambda^{-1}\|\Phi(e_{-1})\|$, where $\lambda_{\mathcal{N}\subseteq \mathcal{M}}$ is the Pimsner-Popa constant.
    This implies that
    \begin{align*}
        \left\|(\Phi(\eta_k\eta_j^*))_{j,k=1}^m \right\|=\lambda^{-1} \|\Phi(e_{-1})\|.
    \end{align*}
\end{remark}

\begin{lemma}
    Suppose $\mathcal{N}\subseteq\mathcal{M}$ is a finite inclusion of finite von Neumann algebras and $\Phi\in\mathbf{CP}_{\mathcal{N}}(\mathcal{M})$ is contractive with $r(\Phi)=1$.
We assume there exists a faithful normal  state $\omega$ on $\mathcal{M}$ such that $\omega\Phi=\omega$.
Then for any $x\in\mathcal{M}(\Phi,\alpha)$, $|\alpha|=1$,
\begin{align*}
x\widehat{\Phi}^{1/2}e_1e_2=\alpha \widehat{\Phi}^{1/2}e_1e_2 x,\quad e_2e_1\widehat{\Phi}^{1/2}x=\alpha xe_2e_1\widehat{\Phi}^{1/2}. 
\end{align*}
\end{lemma}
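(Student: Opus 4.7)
The plan is to adapt the proof of Lemma \ref{lem:commute} verbatim to this setting, replacing the role of the Jones index $\mu$ by $\lambda^{-1}$ and invoking Proposition \ref{prop:formula finite vN algebra} (together with the $\lambda$-extension identities $E_{\mathcal{M}}(e_2) = \lambda \mathbf{1}$ and $e_1 e_2 e_1 = \lambda e_1$) in place of Proposition \ref{prop:formula}. The core reason nothing genuinely new is needed is that the earlier argument never used factoriality of $\mathcal{N}$ or $\mathcal{M}$; it only used the Kadison--Schwarz inequality, the $\Phi$-invariance of the faithful normal state $\omega$, the algebraic formula for $\Phi$ in terms of $\widehat{\Phi}$, and a Pimsner--Popa type inequality to pass from $E_{\mathcal{M}}(T^*T) = 0$ back to $T = 0$.

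First I would show $\Phi(x^*x) = x^*x$. By Kadison--Schwarz applied to the 2-positive, contractive map $\Phi$, one has $\Phi(x^*x) \geq \Phi(x)^*\Phi(x) = |\alpha|^2 x^*x = x^*x$. Since $\omega$ is $\Phi$-invariant, $\omega(\Phi(x^*x) - x^*x) = 0$, so faithfulness of $\omega$ forces equality. Contractivity also gives $\Phi(\mathbf{1}) \leq \mathbf{1}$.

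Next, set $T := x\widehat{\Phi}^{1/2} e_1 e_2 - \alpha \widehat{\Phi}^{1/2} e_1 e_2 x$. I would expand $E_{\mathcal{M}}(T^*T)$ and take the trace $\tau$, mirroring the calculation in Lemma \ref{lem:commute}. Using Proposition \ref{prop:formula finite vN algebra}, which reads $\Phi(y) = \lambda^{-3/2} E_{\mathcal{M}}(e_2 e_1 \widehat{\Phi} y e_1 e_2)$, each of the four resulting terms collapses to a trace of the form $\tau(\Phi(\cdot)\cdot)$. Substituting $\Phi(x^*x) = x^*x$, $\Phi(\mathbf{1}) \leq \mathbf{1}$, $\Phi(x) = \alpha x$, and $\Phi(x^*) = \overline{\alpha} x^*$ gives
\begin{equation*}
\tau(E_{\mathcal{M}}(T^*T)) \leq \lambda^{3/2}\bigl(\tau(x^*x) + \tau(xx^*) - \overline{\alpha}\alpha\, \tau(x^*x) - \alpha\overline{\alpha}\, \tau(xx^*)\bigr) = 0,
\end{equation*}
so $E_{\mathcal{M}}(T^*T) = 0$ by positivity and faithfulness of $\tau$. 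Applying the Pimsner--Popa inequality \eqref{eq:PP inequality finite vN algbera} to the $\lambda$-extension $\mathcal{M} \subseteq \mathcal{M}_2$ yields $T^*T \leq \lambda_{\mathcal{M}\subseteq\mathcal{M}_2}^{-1} E_{\mathcal{M}}(T^*T) = 0$, hence $T = 0$. This is the first identity; the second follows either by the symmetric argument with $T' := e_2 e_1 \widehat{\Phi}^{1/2} x - \alpha x e_2 e_1 \widehat{\Phi}^{1/2}$, or by taking adjoints of the first identity applied to $x^* \in \mathcal{M}(\Phi, \overline{\alpha})$.

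The main point to watch is not conceptual but bookkeeping: the trace identities $E_{\mathcal{M}}(e_2) = \lambda \mathbf{1}$ and the compatibility $\tau_2|_{\mathcal{M}_1} = \tau_1$ between the two successive $\lambda$-extensions must be in force so that the intermediate computations match those of Lemma \ref{lem:commute} up to the replacement $\mu \leftrightarrow \lambda^{-1}$. These compatibilities are precisely the standing assumptions imposed at the start of \S \ref{sec:extension}, so no further hypotheses are required; once they are in hand, the argument is structurally identical and no factoriality is used.
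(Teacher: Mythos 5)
Your proposal is correct and follows essentially the same route as the paper, whose own proof of this lemma is simply the argument of Lemma \ref{lem:commute} transported to the $\lambda$-extension setting via Proposition \ref{prop:formula finite vN algebra}, exactly the $\mu\leftrightarrow\lambda^{-1}$ bookkeeping you describe. The only cosmetic remark is that the final step does not really need the Pimsner--Popa inequality for $\mathcal{M}\subseteq\mathcal{M}_2$: since $\tau_2\circ E_{\mathcal{M}}=\tau_2$ is a faithful trace, $\tau(E_{\mathcal{M}}(T^*T))=0$ already forces $T=0$.
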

\begin{proof}
    The proof is similar to the one of Lemma \ref{lem:commute}.
\end{proof}
We can obtain the following theorem by using the same proof as in Theorem \ref{thm:entry}.
\begin{theorem}\label{thm:entry finite vN algebra}
Suppose $\mathcal{N}\subseteq\mathcal{M}$ is a finite inclusion of finite von Neumann algebras and $\Phi$ is a bimodule quantum channel.
We assume that there exists a faithful normal  state $\omega$ on $\mathcal{M}$ such that $\omega\Phi=\omega$.
Then the following statements hold.
\begin{enumerate}[(i)]
\item  $\mathcal{M}(\Phi,\alpha)=\mathcal{P}(\Phi,\alpha)$ for any $\alpha\in\sigma(\Phi)\cap U(1)$.
\item $\mathcal{M}(\Phi,\alpha)^*=\mathcal{M}(\Phi,\overline{\alpha})$ for any $\alpha\in\sigma(\Phi)\cap U(1)$.
\item $\mathcal{M}(\Phi,\alpha_1)\mathcal{M}(\Phi,\alpha_2)\subseteq \mathcal{M}(\Phi,\alpha_1\alpha_2)$ for any $\alpha_1,\alpha_2\in\sigma(\Phi)\cap U(1)$.
In particular, $\mathcal{M}(\Phi,\alpha)\mathcal{M}(\Phi,\overline{\alpha})$ is a $*$- subalgebra of $\mathcal{M}(\Phi,1)$.

 We further suppose $\mathcal{M}(\Phi,1)$ is a factor.
\item The set $\Gamma:=\sigma(\Phi)\cap U(1)$ is a finite cyclic group.
\item
Then there exists a unitary $u_{\alpha}\in\mathcal{M}(\Phi,\alpha)$, $\alpha\in\Gamma$, such that 
\begin{align*}
\mathcal{M}(\Phi,\alpha)=u_{\alpha}\mathcal{M}(\Phi,1)=\mathcal{M}(\Phi,1)u_{\alpha}.
\end{align*}
Moreover, $\mathcal{M}(\Phi,\alpha)^{n}=\mathcal{M}(\Phi,\alpha^n)$, $n\in\mathbb{N}$.
In particular, $\mathcal{M}(\Phi,\alpha)^{|\Gamma|}=\mathcal{M}(\Phi,1)$, where $|\Gamma|$ is the order of the finite cyclic group $\Gamma$. 
%    \item $\Gamma=\{\lambda\in\sigma(x):\ |\lambda|=1\}$ is a finite cyclic group under the multiplication of complex numbders.
\end{enumerate}
\end{theorem}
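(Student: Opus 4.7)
The plan is to mirror the proof of Theorem \ref{thm:entry} step by step, replacing the subfactor index $\mu$ by $\lambda^{-1}$ throughout. Three structural ingredients already available in this section make the adaptation possible: the commutation identities $x\widehat{\Phi}^{1/2}e_1e_2=\alpha\widehat{\Phi}^{1/2}e_1e_2 x$ and $e_2e_1\widehat{\Phi}^{1/2}x=\alpha xe_2e_1\widehat{\Phi}^{1/2}$ from the unnumbered lemma immediately preceding the theorem, the algebraic representation $\Phi(x)=\lambda^{-1}E_{\mathcal{M}}(y_{\Phi}xe_1)$, and the identification $\CB\cong\mathcal{N}'\cap\mathcal{M}_1$ that continues to hold for $\lambda$-extensions.

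For statement (i), I will apply $E_{\mathcal{M}_1}$ to both sides of the commutation identities above to strip off the factor $\widehat{\Phi}^{1/2}e_1e_2$ and obtain $y_{\Phi}x=\alpha xy_{\Phi}$ and $xy_{\Phi}^*=\alpha y_{\Phi}^*x$, proving $\mathcal{M}(\Phi,\alpha)\subseteq\mathcal{P}(\Phi,\alpha)$. For the reverse inclusion I will feed these relations into $\Phi(x)=\lambda^{-1}E_{\mathcal{M}}(y_{\Phi}xe_1)$ and use $\Phi(\mathbf{1})=\mathbf{1}$ to extract $\Phi(x)=\alpha x$. Statement (ii) is immediate from $\Phi(x)^*=\Phi(x^*)$, and (iii) follows by combining the $y_{\Phi}$-commutation relations of $x_1\in\mathcal{M}(\Phi,\alpha_1)$ and $x_2\in\mathcal{M}(\Phi,\alpha_2)$ to obtain those characterizing $\mathcal{P}(\Phi,\alpha_1\alpha_2)$ for the product $x_1x_2$, then re-invoking (i).

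For (iv) and (v), under the hypothesis that $\mathcal{M}(\Phi,1)$ is a factor, I will take nonzero $x\in\mathcal{M}(\Phi,\alpha)$ and perform its polar decomposition $x=v|x|$. Since $x^*x\in\mathcal{M}(\Phi,\overline{\alpha})\mathcal{M}(\Phi,\alpha)\subseteq\mathcal{M}(\Phi,1)$ and $\mathcal{M}(\Phi,1)$ is weak-operator closed, $|x|\in\mathcal{M}(\Phi,1)$, so $y_{\Phi}$ commutes with $|x|$. A short manipulation with the initial and final projections of $v$, both lying in the factor $\mathcal{M}(\Phi,1)$, transfers the relations $y_{\Phi}v=\alpha vy_{\Phi}$ and $vy_{\Phi}^*=\alpha y_{\Phi}^*v$ from $x$ to $v$, so $v\in\mathcal{M}(\Phi,\alpha)$. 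Finiteness of the factor $\mathcal{M}(\Phi,1)$ then permits pasting $v$ with finitely many partial isometries from $\mathcal{M}(\Phi,1)$ into a unitary $u_{\alpha}\in\mathcal{M}(\Phi,\alpha)$; the coset description $\mathcal{M}(\Phi,\alpha)=u_{\alpha}\mathcal{M}(\Phi,1)=\mathcal{M}(\Phi,1)u_{\alpha}$ and the rule $\mathcal{M}(\Phi,\alpha)^n=\mathcal{M}(\Phi,\alpha^n)$ follow as in Theorem \ref{thm:entry}, and closure under inverses and products makes $\Gamma$ an abelian subgroup of $U(1)$.

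The main obstacle will be establishing finiteness of $\Gamma$. In the subfactor case this was gratis from the finite dimensionality of $\mathcal{N}'\cap\mathcal{M}_1$, which is not automatic for a general $\lambda$-extension of finite von Neumann algebras. I expect to close this gap by passing to the GNS Hilbert space $L^2(\mathcal{M},\omega)$, where the $\omega$-contraction $\Phi$ has peripheral eigenvectors that are simultaneously eigenvectors of $\Phi^*$ for the conjugate eigenvalues and hence pairwise orthogonal across distinct $\alpha$; this orthogonality, together with the factoriality of $\mathcal{M}(\Phi,1)$ and the finiteness of $\omega$, should rule out infinitely many cosets $u_{\alpha}\mathcal{M}(\Phi,1)$, after which the cyclicity is simply the classification of finite subgroups of the unit circle.
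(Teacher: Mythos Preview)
Your approach for (i)--(iii) and for the polar-decomposition construction of the unitaries $u_\alpha$ in (iv)--(v) is exactly what the paper does: it simply declares that the proof of Theorem~\ref{thm:entry} carries over verbatim with $\mu$ replaced by $\lambda^{-1}$. The one point of divergence is the finiteness of $\Gamma$. In Theorem~\ref{thm:entry} this came from $\sigma(\Phi)=\sigma(y_\Phi)$ together with $\dim_{\mathbb{C}}(\mathcal{N}'\cap\mathcal{M}_1)<\infty$, and the paper treats that as still available in the $\lambda$-extension setting (note the explicit use of $d=\dim_{\mathbb{C}}(\mathcal{M}'\cap\mathcal{M}_2)$ in Theorem~\ref{thm: irreducible condition}). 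So from the paper's standpoint your detour is unnecessary: the intended argument is again ``$\sigma(y_\Phi)$ is finite because the relative commutant is finite dimensional''.

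Your instinct that this step deserves scrutiny is reasonable, but the alternative you sketch is not yet a proof: mutual orthogonality of the vectors $u_\alpha\Omega_\omega$ in the infinite-dimensional space $L^2(\mathcal{M},\omega)$ does not by itself bound $|\Gamma|$, and you do not explain what concrete leverage ``factoriality of $\mathcal{M}(\Phi,1)$ and finiteness of $\omega$'' would supply. If you want an argument that genuinely avoids assuming finite dimensionality of the relative commutant, a cleaner route is to show that the $u_\alpha$ are left $\mathcal{M}(\Phi,1)$-linearly independent (a Vandermonde argument using $\Phi(au_\alpha)=\alpha\, au_\alpha$ for $a\in\mathcal{M}(\Phi,1)$ and distinct $\alpha$) and then observe that $\mathcal{M}$ is generated as a left $\mathcal{M}(\Phi,1)$-module by the $m$ elements of the Pimsner--Popa basis for $\mathcal{N}\subseteq\mathcal{M}$, forcing $|\Gamma|\le m$.
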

We give several characterizations of relative irreducibility as follows.
\begin{theorem}\label{thm: irreducible condition}
    Suppose $\mathcal{N}\subseteq\mathcal{M}$ is a finite inclusion of finite von Neumann algebras and $\Phi\in \CP$.
    Let $\displaystyle B=\bigvee_{k=0}^\infty \mathcal{R}\left(\widehat{\Phi}^{(*k)}\right)$.
Then the following statements satisfy (i) $\Rightarrow$ (ii) $\Leftrightarrow$ (iii) %$\Leftrightarrow$ (iv). 
\begin{enumerate}[(i)]
\item $B=\mathbf{1}$;
    \item $\Phi$ is relatively irreducible;
    \item For any projection $p\in\mathcal{M}$,
    \begin{align*}
        \mathcal{R}((\Phi+id)^{d-1}(p))\in\mathcal{N},
    \end{align*}
    where $d=\dim_{\mathbb{C}}(\mathcal{M}'\cap\mathcal{M}_2)$.
%    \item For any non-zero positive elements $x_1, x_2\in \mathcal{M}$ satifying $\langle x_1, x_2 \rangle =0$ and $\langle p,x_2\rangle>0$ for any non-zero projection $p\in\mathcal{N}$, there exists $k\in \bN$, such that
%\begin{align*}
%\left\langle \Phi^k(x_1) , x_2\right\rangle >0.
%\end{align*}
\end{enumerate}
\end{theorem}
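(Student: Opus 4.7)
The plan is to prove the three implications (iii) $\Rightarrow$ (ii), (ii) $\Rightarrow$ (iii), (i) $\Rightarrow$ (ii), using as main tools the formula $\Phi^k(x)=\lambda^{-3/2}E_{\mathcal{M}}(e_2 e_1 \widehat{\Phi^k}\, x\, e_1 e_2)$ from Proposition \ref{prop:formula finite vN algebra}, the identity $\widehat{\Phi^k}=\widehat{\Phi}^{\ast k}$, and the finite dimensionality $\dim_{\mathbb{C}}(\mathcal{M}'\cap\mathcal{M}_2)=d$. Two preparatory identities for the CP map $\Phi$ will be used repeatedly: for any positive $y\in\mathcal{M}$, $\mathcal{R}(\Phi(y))=\mathcal{R}(\Phi(\mathcal{R}(y)))$, and for projections $p_1,p_2\in\mathcal{M}$, $\mathcal{R}(\Phi(p_1\vee p_2))=\mathcal{R}(\Phi(p_1))\vee \mathcal{R}(\Phi(p_2))$. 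Both follow from the Choi--Kadison--Schwarz estimate $\Phi(y^{1/n})^n\leq\|\Phi(\mathbf{1})\|^{n-1}\Phi(y)$ combined with operator monotonicity of $t\mapsto t^{1/n}$ and normality, as in the proof of Proposition \ref{prop:PF theorem for bimodule map}.

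The implication (iii) $\Rightarrow$ (ii) is immediate: if $\Phi(p)\leq cp$ then $\Phi^k(p)\leq c^k p$ by iteration, so $(\Phi+\id)^{d-1}(p)\leq(1+c)^{d-1}p$, which together with $(\Phi+\id)^{d-1}(p)\geq p$ forces $\mathcal{R}((\Phi+\id)^{d-1}(p))=p$, and (iii) yields $p\in\mathcal{N}$. For (ii) $\Rightarrow$ (iii), fix a projection $p$ and set $q:=\mathcal{R}((\Phi+\id)^{d-1}(p))=\bigvee_{k=0}^{d-1}\mathcal{R}(\Phi^k(p))$. The subspaces $V_n:=\operatorname{span}_{\mathbb{C}}\{\widehat{\Phi^j}\}_{j=0}^n$ form an increasing chain in the $d$-dimensional algebra $\mathcal{M}'\cap\mathcal{M}_2$, so there is a smallest $N\leq d-1$ with $V_{N+1}=V_N$; induction based on $\widehat{\Phi^{k+1}}=\widehat{\Phi^k}\ast\widehat{\Phi}$ then gives $\widehat{\Phi^k}\in V_N\subseteq V_{d-1}$ for every $k\geq 0$. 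Linearity (and injectivity) of the correspondence $\Phi\mapsto\widehat{\Phi}$ transfers the same dependencies to $\Phi^k$, so $\Phi^k(p)\in\operatorname{span}_{\mathbb{C}}\{\Phi^j(p)\}_{j=0}^{d-1}$ and $\mathcal{R}(\Phi^k(p))\leq q$ for every $k$. The two preparatory identities then yield $\mathcal{R}(\Phi(q))=\bigvee_{j=0}^{d-1}\mathcal{R}(\Phi^{j+1}(p))\leq q$, i.e.\ $\Phi(q)\leq\|\Phi(q)\|\,q$, and relative irreducibility of $\Phi$ forces $q\in\mathcal{N}$.

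For (i) $\Rightarrow$ (ii), suppose $B=\mathbf{1}$ and let $p$ be a projection with $\Phi(p)\leq cp$, so $\mathcal{R}(\Phi^k(p))\leq p$ and $(\mathbf{1}-p)\Phi^k(p)(\mathbf{1}-p)=0$ for every $k$. Since $p\in\mathcal{M}$ commutes with $\widehat{\Phi^k}\in\mathcal{M}'\cap\mathcal{M}_2$, Proposition \ref{prop:formula finite vN algebra} rewrites this vanishing as $E_{\mathcal{M}}(A_k^{*}A_k)=0$ for $A_k:=\widehat{\Phi^k}^{1/2}pe_1e_2(\mathbf{1}-p)$; faithfulness of $E_{\mathcal{M}}$ forces $A_k=0$, whence $\widehat{\Phi^k}\xi=0$ for $\xi:=pe_1e_2(\mathbf{1}-p)$. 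Passing to range projections (using positivity of $\widehat{\Phi^k}$), $\mathcal{R}(\widehat{\Phi^k})\xi=0$ for all $k$, so $B\xi=0$, and $B=\mathbf{1}$ forces $\xi=0$. Since $e_2$ commutes with $\mathbf{1}-p$, this reads $pe_1(\mathbf{1}-p)\cdot e_2=0$; injectivity of right multiplication by $e_2$ on $\mathcal{M}_1$ (a consequence of $e_2\Omega_1=\Omega_1$ and the separating property of $\Omega_1$) yields $pe_1(\mathbf{1}-p)=0$, and its adjoint $(\mathbf{1}-p)e_1p=0$ combined gives $[p,e_1]=0$, so $p\in\{e_1\}'\cap\mathcal{M}=\mathcal{N}$. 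The main obstacle is (ii) $\Rightarrow$ (iii), where the linear dependence of $\{\widehat{\Phi^k}\}$ in a $d$-dimensional space must be transported through the nonlinear range-projection operation in order to deduce $\mathcal{R}(\Phi(q))\leq q$; it is precisely the two preparatory CP-calculus identities that make this transport possible.
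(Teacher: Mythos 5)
Your proposal is correct, and parts of it follow a genuinely different route from the paper's. For (iii) $\Rightarrow$ (ii) you argue exactly as the paper does. For (ii) $\Rightarrow$ (iii) the paper works with the increasing chain of range projections of the Fourier multipliers of $(\Phi+\id)^{k}$ in the $d$-dimensional algebra $\mathcal{M}'\cap\mathcal{M}_2$, finds a stabilization index $k_0\le d-1$, converts it into a two-sided bound $c_1(\Phi+\id)^{k_0+1}\le(\Phi+\id)^{k_0}\le c_2(\Phi+\id)^{k_0+1}$ via the order correspondence $\Phi_1\le c\Phi_2\Leftrightarrow\widehat{\Phi}_1\le c\widehat{\Phi}_2$, and finishes with Kadison--Schwarz; you instead use a linear-span dimension count on $\{\widehat{\Phi^{j}}\}$, transfer the resulting dependence $\Phi^k\in\operatorname{span}\{\Phi^j\}_{j\le d-1}$ through the CP range-projection calculus ($\mathcal{R}(\Phi(y))=\mathcal{R}(\Phi(\mathcal{R}(y)))$ and compatibility with joins) to get $\mathcal{R}(\Phi(q))\le q$ for $q=\mathcal{R}((\Phi+\id)^{d-1}(p))$. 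Your version lands directly on the exponent $d-1$ (the paper's stabilization argument gives $k_0\le d-1$ and leaves the step from $k_0$ to $d-1$ implicit), at the cost of verifying the two range-calculus identities. For (i) $\Rightarrow$ (ii) the difference is more substantial: the paper deduces $c_2E_{\mathcal{N}}\le\sum_{k=0}^N\Phi^k$ from invertibility of $\sum_k\widehat{\Phi}^{(*k)}$ and concludes via the Pimsner--Popa inequality, whereas you avoid Pimsner--Popa entirely, using the formula $\Phi^k(x)=\lambda^{-3/2}E_{\mathcal{M}}(e_2e_1\widehat{\Phi^k}xe_1e_2)$, faithfulness of $E_{\mathcal{M}}$, and the identification $\mathcal{N}=\mathcal{M}\cap\{e_1\}'$ to show $pe_1(\mathbf{1}-p)=0$; both are valid, and yours replaces an analytic estimate by an algebraic commutation argument. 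Two cosmetic points: like the paper, you silently identify $\widehat{\Phi}^{(*k)}$ with $\widehat{\Phi^k}$ (harmless, as range projections are unaffected by the normalizing scalars), and your citation of the preparatory identities to the proof of Proposition \ref{prop:PF theorem for bimodule map} is loose since that computation does not appear there in print, though the Kadison--Schwarz/operator-monotonicity argument you sketch is standard and correct.
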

\begin{proof}
(i) $\Rightarrow$ (ii)
 Suppose $\Phi(p)\leq c p$ for some projection $p\in\mathcal{M}$ and constant $c>0$.
Since $\widehat{\Phi}\in\mathcal{M}'\cap\mathcal{M}_2$, which is finite dimensional, there exists $N>0$ such that $\displaystyle \mathbf{1}=\bigvee_{k=0}^N \mathcal{R}\left(\widehat{\Phi}^{(*k)}\right)$.
Therefore, there exists a constant $c_1>0$ such that $\displaystyle c_1\mathbf{1}\leq\sum_{k=0}^N \widehat{\Phi}^{(*k)}$.
By Remark \ref{rem:examples} and Remark \ref{rem:order relation}, there exists a constant $c_2>0$ such that $\displaystyle c_2E_{\mathcal{N}}\leq \sum_{k=0}^N\Phi^k$.
By the Pimsner-Popa inequality, we have
\begin{align*}
    \lambda_{\mathcal{N}\subseteq\mathcal{M}} p\leq E_{\mathcal{N}}(p)\leq c_2^{-1}\sum_{k=0}^N\Phi^k(p)\leq c_3p,
\end{align*}
for some constant $c_3>0$.
Hence $p=\mathcal{R}(E_{\mathcal{N}}(p))\in\mathcal{N}$, and then $\Phi$ is relatively irreducible.

(ii) $\Rightarrow$ (iii) 
Note that $\mathcal{R}((\widehat{\Phi}+e_1)^{\ast k})\leq\mathcal{R}((\widehat{\Phi}+e_1)^{\ast (k+1)}) $ for any $k\geq 0$.
Since $\mathcal{M}'\cap\mathcal{M}_2$ is finite dimensional, there exists $k_0\leq d-1$ such that $\mathcal{R}((\widehat{\Phi}+e_1)^{\ast k_0})=\mathcal{R}((\widehat{\Phi}+e_1)^{\ast( k_0+1)}) $.
By Remark \ref{rem:examples} and Remark \ref{rem:order relation}, there exist constants $c_1,c_2>0$ such that 
\begin{align*}
   c_1 (\Phi+id)^{k_0+1}\leq  (\Phi+id)^{k_0}\leq c_2 (\Phi+id)^{k_0+1}.
\end{align*}
Hence for any projection $p\in\mathcal{M}$, $\mathcal{R}((\Phi+id)^{k_0}(p))=\mathcal{R}((\Phi+id)^{k_0+1}(p))$.
By Kadison-Schwarz inequality, 
\begin{align*}
    \Phi(\mathcal{R}((\Phi+id)^{k_0}(p)))\leq \mathcal{R}(\Phi((\Phi+id)^{k_0}(p)) )\leq\mathcal{R}((\Phi+id)^{k_0+1}(p))=\mathcal{R}((\Phi+id)^{k_0}(p)).
\end{align*}
    Since $\Phi$ is relatively irreducible, $\mathcal{R}((\Phi+id)^{k_0}(p))\in\mathcal{N}$.
    
    (iii) $\Rightarrow$ (ii) Suppose $\Phi(p)\leq c p$ for some projection $p\in\mathcal{M}$ and constant $c>0$.
Then $p\leq (\Phi+id)^{d-1}(p)\leq c_1p$ for some $c_1>0$, and hence $p=\mathcal{R}((\Phi+id)^{d-1}(p))\in\mathcal{N}$.
Thus $\Phi$ is relatively irreducible.
\iffalse
  (iii) $\Rightarrow$ (iv)
By (iii), $\mathcal{R}((\Phi+id)^{d-1}(\mathcal{R}(x_1)))\in\mathcal{N}$, and then 
  \begin{align*}
      \langle (\Phi+id)^{d-1}(\mathcal{R}(x_1)),x_2\rangle>0.
  \end{align*}
  Hence there exists $k>0$ such that $\langle \Phi^k(\mathcal{R}(x_1)),x_2\rangle>0$, which follows that  $\langle \Phi^k(x_1),x_2\rangle>0$.
  
    (iv) $\Rightarrow$ (ii)
    \fi
\end{proof}
\begin{lemma}\label{lem:faithful state finite vN}
   Suppose $\mathcal{N}\subseteq\mathcal{M}$ is a finite inclusion of finite von Neumann algebra and $\Phi$ is a relatively irreducible bimodule quantum channel on $\mathcal{M}$.
   Suppose $\mathcal{N}$ is a factor.
   Then there exists a normal faithful state $\omega$ on $\mathcal{M}$ such that $\omega\Phi=\omega$.
\end{lemma}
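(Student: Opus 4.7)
The plan is to reproduce, with only cosmetic changes, the argument from Lemma 3.12 in the subfactor setting. First I would form the trace-adjoint $\Phi^*$ defined by $\tau(\Phi(x)y) = \tau(x\Phi^*(y))$. Since the inclusion has finite index and $\Phi\in \mathbf{CP}_{\mathcal{N}}(\mathcal{M})$ is normal and unital, $\Phi^*$ is again a normal completely positive $\mathcal{N}$-$\mathcal{N}$-bimodule map on $\mathcal{M}$. The short observation made after the definition of relative irreducibility carries over verbatim: $\Phi$ is relative irreducible if and only if $\Phi^*$ is. Because $\Phi^*$ is a bimodule map it also leaves the relative commutant $\mathcal{N}'\cap\mathcal{M}$ invariant.

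Next I would extract a positive fixed vector of $\Phi^*$ inside $\mathcal{N}'\cap\mathcal{M}$. The hypothesis that $\mathcal{N}$ is a factor combined with the existence of a finite Pimsner-Popa basis $\{\eta_j\}_{j=1}^m$ for the $\lambda$-extension forces $\mathcal{N}'\cap\mathcal{M}$ to be a finite-dimensional $C^*$-algebra: $\mathcal{M}$ is finitely generated as a right $\mathcal{N}$-module, and each element of $\mathcal{N}'\cap\mathcal{M}$ is determined by its action on a basis, giving a dimension bound in terms of $m$. By Remark 3.10, relative irreducibility of $\Phi^*$ translates to irreducibility of $\Phi^*|_{\mathcal{N}'\cap\mathcal{M}}$ in the sense of Evans--H\o{}egh-Krohn, so the classical Perron--Frobenius theorem of \cite{EvaHoe78} applied to this finite-dimensional CP map supplies a strictly positive $x\in\mathcal{N}'\cap\mathcal{M}$ with $\Phi^*(x)=rx$, where $r$ is the spectral radius of $\Phi^*|_{\mathcal{N}'\cap\mathcal{M}}$.

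To conclude, I would pin down $r=1$ and convert $x$ into a state. Since $\Phi(\mathbf{1})=\mathbf{1}$, the identity $\tau\circ\Phi=\tau$ holds, which means $\Phi^*$ preserves $\tau$. Hence $\tau(x)=\tau(\Phi^*(x))=r\tau(x)$, and $\tau(x)>0$ forces $r=1$. After normalizing $\tau(x)=1$, set $\omega(y):=\tau(yx)$. Strict positivity of $x$ makes $\omega$ a normal faithful state, and
\[
\omega(\Phi(y))=\tau(\Phi(y)x)=\tau(y\Phi^*(x))=\tau(yx)=\omega(y)
\]
yields the required invariance $\omega\circ\Phi=\omega$. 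The only genuinely new point compared with Lemma 3.12 is justifying that $\mathcal{N}'\cap\mathcal{M}$ is finite-dimensional without assuming $\mathcal{M}$ is a factor; this is the main obstacle, and it is precisely where the factoriality of $\mathcal{N}$ together with the Pimsner--Popa basis is used.
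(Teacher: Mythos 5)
Your route is the same as the paper's: the paper proves this lemma by repeating the argument of Lemma \ref{lem:faithful state} (pass to the trace-adjoint $\Phi^*$, restrict to $\mathcal{N}'\cap\mathcal{M}$, invoke the Evans--H{\o}egh-Krohn theorem on that finite-dimensional algebra, and set $\omega(\cdot)=\tau(\,\cdot\,x)$), which is exactly what you do, and your invariance computation is the paper's verbatim. Two of your justifications need repair, though neither changes the outcome. First, unitality of $\Phi$ does \emph{not} give $\tau\circ\Phi=\tau$; that identity is equivalent to $\Phi^*(\mathbf{1})=\mathbf{1}$ and can fail when $\mathcal{N}'\cap\mathcal{M}\neq\mathbb{C}\mathbf{1}$. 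What you actually need, and what does follow at once from $\Phi(\mathbf{1})=\mathbf{1}$, is $\tau\circ\Phi^*=\tau$, because $\tau(\Phi^*(y))=\tau(y\,\Phi(\mathbf{1}))=\tau(y)$; with that, your evaluation $\tau(x)=\tau(\Phi^*(x))=r\tau(x)$ still pins down $r=1$. Second, your finite-dimensionality argument for $\mathcal{N}'\cap\mathcal{M}$ (``each element is determined by its action on a basis'') only says that $\mathcal{M}$ is finitely generated over $\mathcal{N}$ and gives no bound on the complex dimension of the relative commutant; the factoriality of $\mathcal{N}$, which you mention but never use, is where the bound comes from. Concretely: for a nonzero projection $p\in\mathcal{N}'\cap\mathcal{M}$, the element $E_{\mathcal{N}}(p)$ commutes with every unitary of $\mathcal{N}$, hence lies in $Z(\mathcal{N})=\mathbb{C}\mathbf{1}$ and equals $\tau(p)\mathbf{1}$, and the Pimsner--Popa inequality gives $\tau(p)\mathbf{1}=E_{\mathcal{N}}(p)\geq\lambda_{\mathcal{N}\subseteq\mathcal{M}}\,p$, so $\tau(p)\geq\lambda_{\mathcal{N}\subseteq\mathcal{M}}>0$; any family of mutually orthogonal nonzero projections in $\mathcal{N}'\cap\mathcal{M}$ therefore has at most $\lambda_{\mathcal{N}\subseteq\mathcal{M}}^{-1}$ members, which forces $\mathcal{N}'\cap\mathcal{M}$ to be finite dimensional. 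With these two repairs your proof coincides with the paper's.
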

\begin{proof}
    The proof is similar to the one of Lemma \ref{lem:faithful state}.
\end{proof}

\iffalse
\begin{remark}
Suppose $\mathcal{N}$ is a factor and $\Phi$ is a bimodule quantum channel.
If $\Phi$ is relatively irreducible then $B=\mathbf{1}$.
By Lemma \ref{lem:faithful state finite vN}, there exists a normal faithful state $\omega$ on $\mathcal{M}$ such that $\omega\Phi=\omega$.
Then there exists a condition expectation $E$ from $\mathcal{M}$ into a von Neumann subalgebra.
Since $\Phi$ is relatively irreducible, $E=E_{\mathcal{N}}$.
\end{remark}
\fi

We can also obtain the following theorem by using the same proof as in Theorem \ref{thm:RI Frobenius factor}.
We shall note that the condition that $\mathcal{N}$ is a factor is essential in the proof.
\begin{theorem}[Relative Irreducibility]\label{thm:RI Frobenius finite vN algebra}
    Suppose $\mathcal{N}\subseteq\mathcal{M}$ is a finite inclusion of finite von Neumann algebras and $\Phi$ is a relatively irreducible bimodule quantum channel.
    Suppose $\mathcal{N}$ is a factor.
  Then the eigenvalues of $\Phi$ with modulus $1$ form a finite cyclic subgroup $\Gamma$ of the unit circle $ U(1)$.
  The  space of fixed points $\mathcal{M}(\Phi,1)=\mathcal{N}$.
  For each $\alpha\in\Gamma$, there exists a unitary $u_{\alpha}\in\mathcal{M}(\Phi,\alpha)$ such that $\mathcal{M}(\Phi,\alpha)=u_{\alpha}\mathcal{N}=\mathcal{N}u_{\alpha}$.
\end{theorem}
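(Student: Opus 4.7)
The plan is to follow the same template as Theorem \ref{thm:RI Frobenius factor}, transported to the $\lambda$-extension setting, invoking Theorem \ref{thm:entry finite vN algebra} once its hypotheses have been verified. First I would apply Lemma \ref{lem:faithful state finite vN} (this is where the factoriality of $\mathcal{N}$ is used) to produce a normal faithful state $\omega$ on $\mathcal{M}$ with $\omega\circ\Phi=\omega$. This places us exactly in the setting required by Theorem \ref{thm:entry finite vN algebra}.

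Next I would identify the fixed-point algebra. Since $\Phi$ is an $\mathcal{N}$-$\mathcal{N}$-bimodule map and $\Phi(\mathbf{1})=\mathbf{1}$, the inclusion $\mathcal{N}\subseteq \mathcal{M}(\Phi,1)$ is automatic. For the converse, part (i) of Theorem \ref{thm:entry finite vN algebra} gives $\mathcal{M}(\Phi,1)=\mathcal{P}(\Phi,1)$, which is a von Neumann subalgebra of $\mathcal{M}$. For any projection $p\in \mathcal{M}(\Phi,1)$ we have $\Phi(p)=p\leq p$, so the relative irreducibility of $\Phi$ forces $p\in\mathcal{N}$. Since a von Neumann algebra is generated (in the weak operator topology) by its projections, we conclude $\mathcal{M}(\Phi,1)\subseteq \mathcal{N}$, and hence $\mathcal{M}(\Phi,1)=\mathcal{N}$.

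At this point $\mathcal{M}(\Phi,1)=\mathcal{N}$ is by assumption a factor, so the factoriality hypothesis required in parts (iv) and (v) of Theorem \ref{thm:entry finite vN algebra} is met. Invoking those parts directly gives that $\Gamma:=\sigma(\Phi)\cap U(1)$ is a finite cyclic subgroup of $U(1)$, and that for every $\alpha\in\Gamma$ there is a unitary $u_{\alpha}\in\mathcal{M}(\Phi,\alpha)$ with
\[
\mathcal{M}(\Phi,\alpha)=u_{\alpha}\mathcal{M}(\Phi,1)=\mathcal{M}(\Phi,1)u_{\alpha}=u_{\alpha}\mathcal{N}=\mathcal{N}u_{\alpha},
\]
which is precisely the conclusion.

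The only non-routine input is the existence of a normal faithful $\Phi$-invariant state, isolated in Lemma \ref{lem:faithful state finite vN}; that step genuinely uses both the relative irreducibility and the factoriality of $\mathcal{N}$ (through an appeal to Remark \ref{rem:relative irr and irr} and the finite-dimensional Perron--Frobenius statement for $\Phi^{\ast}|_{\mathcal{N}'\cap\mathcal{M}}$). Everything else is a direct transcription of the subfactor proof, with the Pimsner--Popa inequality \eqref{eq:PP inequality finite vN algbera} and Proposition \ref{prop:formula finite vN algebra} replacing their type~II$_{1}$ counterparts.
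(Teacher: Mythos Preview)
Your proposal is correct and follows exactly the same approach as the paper, which simply says the proof is identical to that of Theorem~\ref{thm:RI Frobenius factor}: apply Lemma~\ref{lem:faithful state finite vN} to obtain the faithful $\Phi$-invariant state, use relative irreducibility on projections in $\mathcal{M}(\Phi,1)$ to force $\mathcal{M}(\Phi,1)=\mathcal{N}$, and then invoke Theorem~\ref{thm:entry finite vN algebra}~(iv)--(v). Your write-up is slightly more detailed than the paper's (you spell out why $\mathcal{N}\subseteq\mathcal{M}(\Phi,1)$ and why projections suffice), but the logical structure is the same.
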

 Theorem \ref{thm:RI Frobenius finite vN algebra} implies the quantum Frobenius theorem of Evans and H{\o}egh-Krohn \cite{EvaHoe78} as follows.
\begin{corollary}\label{cor:Frob}
    Suppose $\Phi:\mathcal{M}\to \mathcal{M}$ is an irreducible quantum channel and $\mathcal{M}$ is finite-dimensional.
    Then $\Gamma:=\sigma(\Phi) \cap  U(1)$ is a finite cyclic group and $\mathcal{M}(\Phi, \alpha)$ consists of a multiple of unitary elements.
\end{corollary}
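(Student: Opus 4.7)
The plan is to derive Corollary \ref{cor:Frob} as a direct specialization of Theorem \ref{thm:RI Frobenius finite vN algebra} to the case $\mathcal{N} = \mathbb{C}\mathbf{1}$. First I would verify that the hypotheses of that theorem are met: since $\mathcal{M}$ is finite-dimensional, it is automatically a finite von Neumann algebra (equipped with, say, a normalized trace), the inclusion $\mathbb{C}\mathbf{1} \subseteq \mathcal{M}$ is trivially of finite index (the basic-construction algebra $\mathcal{M}_1$ is again finite-dimensional), and $\mathbb{C}\mathbf{1}$ is of course a factor. Any quantum channel $\Phi$ on $\mathcal{M}$ is automatically a unital normal completely positive $\mathbb{C}\mathbf{1}$-$\mathbb{C}\mathbf{1}$-bimodule map, so $\Phi \in \mathbf{CP}_{\mathcal{N}}(\mathcal{M})$ with $\mathcal{N} = \mathbb{C}\mathbf{1}$.

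Next I would invoke Remark \ref{rem:relative irr and irr}, which says that when $\mathcal{N} = \mathbb{C}\mathbf{1}$ the notion of relative irreducibility coincides with the classical Evans--H\o{}egh-Krohn irreducibility. Thus the irreducibility assumption on $\Phi$ in the corollary is exactly the hypothesis needed to apply Theorem \ref{thm:RI Frobenius finite vN algebra}.

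Applying the theorem then yields immediately that $\Gamma := \sigma(\Phi) \cap U(1)$ is a finite cyclic subgroup of the unit circle and that $\mathcal{M}(\Phi, 1) = \mathcal{N} = \mathbb{C}\mathbf{1}$. For each $\alpha \in \Gamma$, the theorem provides a unitary $u_{\alpha} \in \mathcal{M}(\Phi, \alpha)$ such that
\[
  \mathcal{M}(\Phi, \alpha) \;=\; u_{\alpha}\, \mathcal{M}(\Phi, 1) \;=\; u_{\alpha}\, \mathbb{C}\mathbf{1} \;=\; \mathbb{C}\, u_{\alpha},
\]
which is precisely the assertion that every eigenspace at a phase consists of scalar multiples of a unitary. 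Since the derivation is purely a matter of specialization, there is no genuine obstacle here; the only point that requires a moment's care is confirming that the definitional hypotheses of Theorem \ref{thm:RI Frobenius finite vN algebra} (finite index, existence of a $\lambda$-extension, bimodule structure) are all automatic for the inclusion $\mathbb{C}\mathbf{1} \subseteq \mathcal{M}$ in the finite-dimensional setting.
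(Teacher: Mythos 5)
Your proposal is correct and follows exactly the paper's own route: specializing Theorem \ref{thm:RI Frobenius finite vN algebra} to $\mathcal{N}=\mathbb{C}\mathbf{1}$, where relative irreducibility reduces to Evans--H{\o}egh-Krohn irreducibility and the fixed-point space $\mathcal{M}(\Phi,1)=\mathbb{C}\mathbf{1}$ forces each eigenspace to be $\mathbb{C}u_{\alpha}$. The extra checks you include (finite index of $\mathbb{C}\mathbf{1}\subseteq\mathcal{M}$, the trivial bimodule structure) are the same implicit verifications the paper leaves to the reader.
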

\begin{proof}
It follows from Theorem \ref{thm:RI Frobenius finite vN algebra} when $\mathcal{N}=\mathbb{C}\mathbf{1}$ and $\mathcal{M}$ is finite dimensional.
\end{proof}
%\begin{remark}
%By considering that $\mathcal{M}$ is abelian in Corollary \ref{cor:Frob}, we obtain the classical Frobenius theorem for irreducible matrices.
%\end{remark}

Evans and H{\o}egh-Krohn introduced the Collatz-Wielandt formula for positive maps on finite dimensional C$^*$ algebras.
The Collatz-Wielandt formula is not essential in our proofs.
However, we still have the Collatz-Wielandt formula for relatively irreducible bimodule quantum channels.
\begin{proposition}
Suppose $\mathcal{N}\subseteq\mathcal{M}$ is finite index and $\Phi\in\CP$ with the spectral radius $r=1$.
Suppose that one of the following two conditions holds: (1) $\Phi^*(\mathbf{1})=\mathbf{1}$; (2) $\Phi$ is a relatively irreducible bimodule quantum channel and $\mathcal{N}$ is a factor. 
Then the following statements hold:
\begin{enumerate}[(i)]
    \item If there exists a non-zero positive element $x \in\mathcal{M}$ such that $\Phi(x)\leq x$ or $\Phi(x)\geq x$, then $ \Phi(x)= x$;
    \item  If there exists a non-zero positive element $x \in\mathcal{M}$ such that $\Phi(x) = \widetilde{r} x$, then $\widetilde{r}=1$.
\end{enumerate}    
\end{proposition}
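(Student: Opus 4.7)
The plan is to unify the two hypotheses by producing, in each case, a normal faithful $\Phi$-invariant state $\omega$ on $\mathcal{M}$. Once such an $\omega$ is available, both (i) and (ii) follow immediately from faithfulness.

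For case (1), I would take $\omega = \tau$. Using the adjoint identity $\tau(\Phi(x)y)=\tau(x\Phi^*(y))$ from Remark \ref{rem:operation bimodule map} with $y=\mathbf{1}$ and the hypothesis $\Phi^*(\mathbf{1})=\mathbf{1}$, we get $\tau(\Phi(x))=\tau(x)$ for every $x\in\mathcal{M}$, so $\tau$ is a normal faithful $\Phi$-invariant state. For case (2), since $\mathcal{N}$ is a factor and $\Phi$ is a relatively irreducible bimodule quantum channel, Lemma \ref{lem:faithful state} (equivalently Lemma \ref{lem:faithful state finite vN} in the $\lambda$-extension setting) directly produces a normal faithful state $\omega$ on $\mathcal{M}$ with $\omega\circ\Phi=\omega$.

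With $\omega$ in hand, item (i) is straightforward: if $\Phi(x)\leq x$ then $x-\Phi(x)$ is a positive element with
\[
\omega(x-\Phi(x))=\omega(x)-\omega(\Phi(x))=0,
\]
and faithfulness of $\omega$ forces $\Phi(x)=x$; the case $\Phi(x)\geq x$ is symmetric. For item (ii), note first that $\widetilde{r}\geq 0$ automatically, since $\Phi(x)\geq 0$ and $x\geq 0$ is nonzero, so any positive functional $\phi$ with $\phi(x)>0$ yields $\widetilde{r}\phi(x)=\phi(\Phi(x))\geq 0$. Because $\omega$ is faithful and $x$ is a nonzero positive element, $\omega(x)>0$, and applying $\omega$ to the identity $\Phi(x)=\widetilde{r}x$ gives $\omega(x)=\omega(\Phi(x))=\widetilde{r}\,\omega(x)$, hence $\widetilde{r}=1$.

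The only non-trivial step is the production of the invariant faithful state in case (2), and this is already handled by Lemma \ref{lem:faithful state}; everything else in the argument is a short use of positivity together with faithfulness. Consequently the main obstacle is conceptual rather than computational, namely recognizing that both hypotheses (1) and (2) have the same consequence of yielding a faithful $\Phi$-invariant normal state, after which the Collatz--Wielandt conclusions reduce to one-line verifications.
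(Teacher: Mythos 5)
Your proposal is correct, and for hypothesis (2) it coincides with the paper's proof: both invoke Lemma \ref{lem:faithful state finite vN} (the finite--von Neumann analogue of Lemma \ref{lem:faithful state}) to get a normal faithful state $\omega$ with $\omega\Phi=\omega$, and then conclude from $\omega(x-\Phi(x))=0$ and faithfulness. Where you diverge is hypothesis (1): the paper does not use the trace directly; it observes that $\Phi^*E_{\mathcal{N}}=\Phi^*(\mathbf{1})E_{\mathcal{N}}=E_{\mathcal{N}}$ (using that $\Phi^*$ is an $\mathcal{N}$-$\mathcal{N}$-bimodule map), takes adjoints to get $E_{\mathcal{N}}\Phi=E_{\mathcal{N}}$, and then applies the Pimsner--Popa inequality $0=E_{\mathcal{N}}(x-\Phi(x))\geq\lambda_{\mathcal{N}\subseteq\mathcal{M}}(x-\Phi(x))\geq 0$ to force $\Phi(x)=x$. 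Your route instead notes that $\Phi^*(\mathbf{1})=\mathbf{1}$ already makes the trace $\tau$ a normal faithful $\Phi$-invariant state, so case (1) reduces to the same faithfulness argument as case (2); this is slightly more elementary, unifies the two cases, and in particular does not use the Pimsner--Popa inequality (hence not even the finite-index hypothesis) for case (1), whereas the paper's version displays the role of the Pimsner--Popa constant explicitly. Your treatment of (ii) by applying $\omega$ to $\Phi(x)=\widetilde{r}x$ and using $\omega(x)>0$ is also fine, and is marginally more direct than the paper's ``(ii) follows from (i)'' (which splits into $\widetilde{r}\leq 1$ or $\widetilde{r}\geq 1$); both are valid.
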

\begin{proof}
We first assume that $\Phi^*(\mathbf{1})=\mathbf{1}$.
 Suppose that $\Phi(x)\leq x$ for some nonzero positive element $x\in \mathcal{M}$.
Since $\Phi^*E_{\mathcal{N}}=\Phi^*(\mathbf{1})E_{\mathcal{N}}=E_{\mathcal{N}}$, we have $E_{\mathcal{N}}^*\Phi=E_{\mathcal{N}}^*$.
Hence $E_{\mathcal{N}}\Phi=E_{\mathcal{N}}$.
We have
\begin{align*}
E_{\mathcal{N}}(x-\Phi(x)) = E_{\mathcal{N}}(x)-E_{\mathcal{N}}\Phi(x)=0.
\end{align*}
By the Pimsner-Popa inequality,
\begin{align*}
   0= E_{\mathcal{N}}(x-\Phi(x)) \geq\lambda_{\mathcal{N}\subseteq\mathcal{M}} (x-\Phi(x))\geq0.
\end{align*}
Hence  $\Phi(x)= x$.
Similarly, we can prove the case when $\Phi(x)\geq x$.
(ii) It follows from (i).    

We next assume that $\Phi$ is relatively irreducible and $\mathcal{N}$ is a factor. 
By Lemma \ref{lem:faithful state finite vN}, there exists a normal faithful state $\omega$ on $\mathcal{M}$  such that $\omega\Phi=\omega$.
If $\Phi(x)\leq x$, then $\omega(x-\Phi(x))=0$ implies that $\Phi(x)=x$ since $\omega$ is faithful.
Similarly, we can prove the case when $\Phi(x)\geq x$.
We complete the proof.
\end{proof}

\section*{Acknowlednements}
We would like to thank Ce Wang for providing useful literature on quantum Markov processes.
%A.~ J. was supported in part by ARO Grant W911NF-19-1-0302, ARO MURI Grant W911NF-20-1-0082, and NSF Eager Grant 2037687. 
L. Huang, Z. Liu, and J. Wu were supported by grants from Beĳing Institute of
Mathematical Sciences and Applications. 
C. Jiang was supported by Hebei Natural Science Foundation (No. A2023205045) and  by National Natural Science Foundation of China (Grant No. 12471120).
Z. Liu was supported by Beĳing Natural Science Foundation Key Program (Grant No. Z220002) and by Beĳing Natural Science Foundation
(Grant No. Z221100002722017). 
J. Wu was supported by National Natural Science Foundation of China (Grant Nos. 12371124 and 12031004).

\bibliographystyle{plain}

\begin{thebibliography}{99}
{
\bibitem{AlbHoe78} S. Albeverio, R. H{\o}egh-Krohn, Frobenius theory for positive maps of von Neumann algebras,
\href{https://link.springer.com/article/10.1007/BF01940763}{\textit{Comm. Math. Phys.}, \textbf{64} (1978), 83--94.}

\bibitem{Bis97} D. Bisch, Bimodules, higher relative commutants and the fusion algebra associated to a subfactor, \textit{Fields Institute Communications}, \textbf{13}, 1997, 13--63.


\bibitem{BisJon00} D. Bisch, V. Jones, Singly generated planar algebras of small dimension, 
\href{https://projecteuclid.org/journals/duke-mathematical-journal/volume-101/issue-1/Singly-generated-planar-algebras-of-small-dimension/10.1215/S0012-7094-00-10112-3.short}{\textit{Duke Math. J.}, \textbf{101} (2000), 41--75.}

%\bibitem{BKLR15}
%M. Bischoff, Y. Kawahigashi, R. Longo, K. Rehren, Tensor Categories and Endomorphisms of von Neumann Algebras, \href{https://link.springer.com/book/10.1007/978-3-319-14301-9}{\textit{Springer}, 2015.}

\bibitem{Cho75}
M. Choi, 
Completely positive linear maps on complex matrices,
\href{https://www.sciencedirect.com/science/article/pii/0024379575900750}
{\textit{Linear Algebra Appl.}, \textbf{10} (1975), 285--290.}

\bibitem{Con94}
A. Connes,
Noncommutative Geometry,
\textit{Academic Press, San Diego}, 1994.


\bibitem{DKY19}
Ameur Dhahri, C. Ko, H. Yoo, Quantum Markov chains associated with open quantum random walks, \href{https://link.springer.com/article/10.1007/s10955-019-02342-z}{\textit{J. Stat. Phys}, \textbf{176} (2019), 1272--1295.}

\bibitem{EvaHoe78}
D. Evans, R. H{\o}egh-Krohn,  Spectral properties of positive
  maps on C$^*$-algebras, \href{https://londmathsoc.onlinelibrary.wiley.com/doi/abs/10.1112/jlms/s2-17.2.345}{\textit{J. London Math. Soc. (2)}, \textbf{17} (1978), 345--355.}

%\bibitem{EK98}
%D. Evans, Y. Kawahigashi, Quantum symmetries on operator algebras, \href{https://academic.oup.com/book/53352}{\textit{Oxford University Press}, 1998.}


\bibitem{Gud08}
S. Gudder, Quantum Markov chains, \href{ https://doi.org/10.1063/1.2953952}{\textit{J. Math. Phys.}, \textbf{49} (2008), 072105.}  

%\bibitem{Hia87}
%F. Hiai, Majorization and stochastic maps in von Neumann algebras,
%\href{https://www.semanticscholar.org/paper/Majorization-and-stochastic-maps-in-von-Neumann-Hiai/b814892dc4ef2b29c40d245482195a94101acab4}{\textit{J. Math. Anal. Appl.}, \textbf{127} (1987), 18--48.}


\bibitem{HJLW23}
L. Huang, A. Jaffe, Z. Liu, J. Wu, The quantum Perron-Frobenius space, \href{https://arxiv.org/pdf/2305.18971.pdf}{arXiv:2305.18971, 2023}.

\bibitem{JLW16} 
C. Jiang, Z. Liu, J. Wu, Noncommutative uncertainty principles, \href{https://www.sciencedirect.com/science/article/pii/S0022123615003237}{\textit{J. Funct. Anal.}, \textbf{270} (2016), 264--311.}

\bibitem{JLW19}
C. Jiang, Z. Liu, J. Wu, Block maps and Fourier analysis,
\href{https://link.springer.com/article/10.1007/s11425-017-9263-7}{\textit{Sci. China Math.}, \textbf{62} (2019), 1585--1614.}

\bibitem{JJLRW20}
A. Jaffe, C. Jiang, Z. Liu, Y. Ren, and J. Wu, Quantum Fourier analysis,
\href{https://www.pnas.org/content/pnas/117/20/10715.full.pdf}
{\textit{PNAS}, {\bf 117} (2020), 10715--10720}.


\bibitem{Jon83}
 V. Jones,  Index for subfactors, \href{https://link.springer.com/article/10.1007/BF01389127}{ \textit{ Invent. Math.}, \textbf{72} (1983), 1--25.}
\bibitem{Jon21}
\bysame,
 Planar algebras, I,
 \href{https://arxiv.org/abs/math/9909027}
 {arXiv:9909027}, 
\href{https://nzjmath.org/index.php/NZJMATH/article/view/172/61}{\textit{
 New Zealand J. Math.}, \textbf{52} (2021), 1--107.}

\bibitem{Liu16}
Z. Liu, Exchange relation planar algebras of small rank, \href{https://www.ams.org/journals/tran/2016-368-12/S0002-9947-2016-06582-4/}{\textit{Trans. Amer. Math. Soc.}, \textbf{368} (2016), 8303--8348}.

\bibitem{LK95}
R. Longo, K.-H. Rehren, Nets of subfactors, \href{https://worldscientific.com/doi/10.1142/S0129055X95000232}{\textit{Rev. Math. Phys.}, \textbf{07} (1995), 567--597}

 
%\bibitem{NC10}
%M. Nielsen, I. Chuang, Quantum computation and quantum information, 10th anniversary edition,
%\href{http://csis.pace.edu/~ctappert/cs837-19spring/QC-textbook.pdf}{\textit{Cambridge University Press}, 2010.}

%\bibitem{Pet87}
%D. Petz, Jensen’s inequality for positive contractions on operator algebras, 
%\href{https://www.ams.org/journals/proc/1987-099-02/S0002-9939-1987-0870784-0/S0002-9939-1987-0870784-0.pdf#:~:text=For%20a%20positive%20element%20a%20of%20the%20algebra,the%20following%20inequality%20is%20obtained%3A%20r%28%2F%28a%28a%29%29%29%20%3C%20r%28a%28f%28a%29%29%29.}{\textit{Proc. Amer. Math. Soc.}, \textbf{99} (1987), 273--275.}

\bibitem{PimPop86}
M. Pimsner, S. Popa, Entropy and index for subfactors, \href{http://www.numdam.org/item/ASENS_1986_4_19_1_57_0/}{\textit{Ann. Sci. \'{E}cole Norm. Sup. (4) }, \textbf{19} (1986), 57--106.}

\bibitem{Pop86}
S. Popa, Correspondences, \href{www.math.ucla.edu/ popa/preprints.html}\textit{ INCREST preprint, (unpublished)}, 1986.

\bibitem{SPF14}
R. Santos, R. Portugal, M. Fragoso, Decoherence in quantum Markov chains, \href{https://link.springer.com/article/10.1007/s11128-013-0672-z}{\textit{Quantum Inf. Process}, \textbf{13} (2014), 559--572.}

\bibitem{Sto13}
E. St\o rmer, Positive linear maps of operator algebras, \href{https://link.springer.com/book/10.1007/978-3-642-34369-8}{\textit{ Springer monographs in mathematics}, 2013.}



\bibitem{Zha23}
Z. Zhao, Relative entropy for quantum channels, \href{https://arxiv.org/pdf/2312.16576}{arXiv:2312.16576, 2023.}

}
\end{thebibliography}

\end{document}